\documentclass[reqno, 12pt]{amsart}

\usepackage[margin=1in]{geometry}
\usepackage[utf8]{inputenc}
\usepackage[british]{babel}
\usepackage[T1]{fontenc}
\usepackage{amsmath, amsfonts, dsfont, amssymb, amsthm, csquotes, mathtools, hyperref, biblatex, setspace}
\allowdisplaybreaks
\numberwithin{equation}{section}

\newtheorem{thm}{Theorem}[section]

\newtheorem{lem}[thm]{Lemma}

\theoremstyle{definition}

\newtheorem*{ack}{Acknowledgements}

\theoremstyle{remark}

\DeclareMathOperator{\GL}{GL}
\DeclareMathOperator{\ord}{ord}
\DeclareMathOperator{\tr}{Tr}
\DeclareMathOperator{\cha}{char}
\DeclareMathOperator{\mor}{Mor}
\DeclareMathOperator{\Q}{\mathbb{Q}}
\DeclareMathOperator{\PP}{\mathbb{P}}
\DeclareMathOperator{\C}{\mathbb{C}}
\DeclareMathOperator{\Z}{\mathbb{Z}}
\DeclareMathOperator{\R}{\mathbb{R}}
\DeclareMathOperator{\N}{\mathbb{N}}
\DeclareMathOperator{\T}{\mathbb{T}}

\newcommand{\abs}[1]{\left\lvert#1\right\rvert}

\newcommand{\floor}[1]{\left\lfloor#1\right\rfloor}
\newcommand{\legendre}[2]{\genfrac{(}{)}{}{}{#1}{#2}}
\newcommand{\fq}[0]{\mathbb{F}_q}
\newcommand{\fqt}[0]{\mathbb{F}_q [t]}
\newcommand{\fql}[0]{\mathbb{F}_q ((t^{-1}))}

\makeatletter
\DeclareRobustCommand*{\bfseries}{%
  \not@math@alphabet\bfseries\mathbf
  \fontseries\bfdefault\selectfont
  \boldmath
}
\makeatother

\title{The Circle Method for Quadrics over Function Fields}
\author{Johanna Mettasch}
\address{Ins\-ti\-tut für Al\-ge\-bra, Zah\-len\-the\-o\-rie und Dis\-kre\-te Ma\-the\-ma\-tik, Ins\-ti\-tut für Al\-ge\-bra\-i\-sche Ge\-o\-me\-trie\\
Leib\-niz U\-ni\-ver\-si\-tät Han\-no\-ver\\ 
Wel\-fen\-gar\-ten 1\\ 
30167 Han\-no\-ver\\
Germany}
\email{mettasch@math.uni-hannover.de}

\begin{document}
\begin{abstract}
    We use the circle method to count $\mathbb{F}_q(t)$-rational points of bounded naive height on a quadric hypersurface $X\subseteq \mathbb{P}^{n-1}$ defined over $\mathbb{F}_q$, provided that $\mathrm{char}(\mathbb{F}_q)>2$ and $n\ge 3$. Viewing these points as morphisms $\mathbb{P}^1 \to X$ of fixed degree, we obtain exact formulas for their number depending on the parity of $n$ and on the determinant of the quadratic form defining $X$, including secondary terms in some cases. 
\end{abstract}

\maketitle
\tableofcontents

\section{Introduction}
Given a non-singular form $F\in\Z[x_1,\dots,x_n]$ of degree~$d\ge 2$, we consider the counting function
\begin{align*}
    N(B)&=\#\left\{\left(x_1,\dots,x_n\right)\in\Z^n :\begin{array}{l}
   -B<x_1,\dots,x_n<B,\\
   F(x_1,\dots,x_n)=0
   \end{array} \right\},
\end{align*}
which counts the number of integral zeros of $F$ whose components are bounded by  $B>0$. Using the Hardy--Littlewood--Ramanujan circle method, Birch~\cite{birch1962} established an asymptotic formula for $N(B)$ as $B\to\infty$, provided that the number~$n$ of variables exceeds~$(d-1)2^d$.

In the case where~$d=2$, i.e. when $F$ is a quadratic form, Getz~\cite{getz2018} and Tran~\cite{tran2019} have then worked out terms of second order for~$n\ge 5$.

Over function fields, the analogue of Birch's result was given by Lee~\cite{lee2011}. We will refine this result for quadratic forms by computing the corresponding second order terms. In contrast to the setting over~$\Q$, our results yield exact formulas rather than asymptotic expansions.

Let $f\in\fq[x_1,\dots,x_n]$ be a quadratic form, where $q$ is a power of an odd prime number. Then we consider the analogous counting function
\begin{align*}
    N(P)&=\#\left\{(x_1,\dots,x_n)\in\fqt^n :\begin{array}{l}
   \deg(x_1),\dots,\deg(x_n)<P,\\
   f(x_1,\dots,x_n)=0
   \end{array} \right\}
\end{align*}
counting the number of zeros of $f$ whose components have degree smaller than $P\in\N$. 

If we additionally restrict to primitive elements and identify solutions up to multiplication by units, we obtain a related counting function that we can use to count morphisms defined over~$\fq$ of degree exactly~$P$ from~$\PP^1$ to the projective hypersurface 
\[
   X\coloneq V(f)\subseteq \PP^{n-1}.
\]
These morphisms are the $\fq$-points of the scheme
\[
   \mor_P(\PP^1,X)=\left\{g\colon \PP^1\to X: \deg(g)=P\right\}, 
\] 
and counting them is equivalent to counting $\fq(t)$-rational points on $X$ of naive height $q^P$, where the naive height is determined by the maximum degree of the homogeneous coordinates.

Using the circle method in the function field setting, together with an explicit evaluation of quadratic Gauß sums, we will arrive at the following three cases yielding different results each:
\begin{itemize}
    \item[(1)] $n$ is even and $(-1)^{\frac{n}{2}}\det(f)$ is a square in $\fq^\times$,
    \item[(2)] $n$ is even and $(-1)^{\frac{n}{2}}\det(f)$ is not a square in $\fq^\times$,
    \item[(3)] $n$ is odd. 
\end{itemize}

In the first case, our main result is the following theorem, which gives an exact formula for the cardinality of $\mor_P(\PP^1,X)(\fq)$ for~$n\ge 4$.
\begin{thm}\label{thm-quadrat:mor}
Let $n$ be even, and let $(-1)^{\frac{n}{2}}\det(f)$ be a square in $\fq^\times$. For $n=4$ we have
\begin{align*}
   \#\mor_P(\PP^1,X)(\fq)&=\frac{\left(q^2-1\right)^2}{q^2}Pq^{2P}+\frac{\left(q^2-1\right)\left(q+1\right)^2}{q^2}\,q^{2P},
\end{align*}
and for~$n\ge 6$ we have
\begin{align*}
  \#\mor_P(\PP^1,X)(\fq)&=\frac{\left(q^{\frac{n}{2}}-1\right)\left(q^{n-2}-1\right)\left(q^{n-3}-1\right)}{q^{n-2}\left(q^{\frac{n}{2}-2}-1\right)(q-1)}\, q^{P(n-2)}
    - \frac{\left(q^{n-2}-1\right)\left(q^{\frac{n}{2}}-1\right)}{q^{\frac{n}{2}}\left(q^{\frac{n}{2}-2}-1\right)}\,q^{\frac{n}{2}P}.
\end{align*}
\end{thm}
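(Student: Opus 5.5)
The plan is to compute first the unrestricted count $N(P)$ exactly with the function field circle method, then pass to primitive vectors by Möbius inversion over monic polynomials, and finally to morphisms of exact degree $P$.

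\textbf{Step 1: exact circle method.} Write $e(\cdot)$ for the standard additive character on $\T=\{\alpha\in\fql : |\alpha|<1\}$ and
\[
N(P)=\int_{\T}S(\alpha)\,d\alpha,\qquad S(\alpha)=\sum_{\substack{x\in\fqt^n\\ \deg x_i<P}} e(\alpha f(x)).
\]
I will use the function field Dirichlet dissection. For a parameter $J$ (to be taken comparable to $P$), $\T$ is the \emph{disjoint} union of the arcs $\{a/r+\theta : |\theta|<|r|^{-1}q^{-J}\}$, where $r$ is monic with $\deg r\le J$ and $a$ runs over residues mod $r$ coprime to $r$. There are therefore no minor arcs. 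On each arc I split $x=y+rz$ with $y$ running over residues mod $r$. Because $f$ is quadratic and the ultrametric norm makes the phase $\theta f$ locally constant, $S(a/r+\theta)$ factors exactly as
\[
S(a/r+\theta)=|r|^{-n}\,S_r(a)\,I_r(\theta),\qquad S_r(a)=\sum_{y \bmod r} e\bigl(af(y)/r\bigr),
\]
where $I_r(\theta)$ is an explicitly computable sum over $z$. The function field analogue of Poisson summation (or the identity $\int_{|\theta|<q^{-k}}e(\theta g)\,d\theta = q^{-k}\mathds{1}_{|g|<q^{k}}$) then turns the $\theta$-integral into a counting problem for a quadratic form over $\fq$ in the top coefficients. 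This gives a closed form depending only on $\deg r$ and $P$.

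\textbf{Step 2: Gauss sums and the singular series.} Since $\cha \fq>2$, I diagonalise $f$ over $\fq$. The sum $S_r(a)$ is then multiplicative in $r$, and for a prime power $\pi^k$ it equals a product of one-variable quadratic Gauss sums. The latter evaluate to $|\pi|^{k/2}$ times a Legendre-symbol factor when $k$ is odd, and to $|\pi|^{k/2}$ when $k$ is even. With $n$ even, the product of the $n$ symbols collapses to $\legendre{(-1)^{n/2}\det f}{\pi}$ raised to $\deg\pi$-dependent powers. In case (1) this character is trivial, so summing over $a$ yields Ramanujan-type sums. The resulting singular series
\[
\sum_{r} |r|^{-n}\sum_a S_r(a)\,(\ldots)
\]
becomes an Euler product expressible through $\zeta_{\fqt}(s)=(1-q^{1-s})^{-1}$ evaluated at $s=n/2-1$ and $s=n-2$-type arguments. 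Since these zeta functions are rational in $q^{-s}$, the sums are finite geometric series, truncated at $\deg r\le J$. I will add them up exactly, using the closed forms from Step 1, which also depend on $\deg r$.

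\textbf{Step 3: collecting terms.} After Steps 1 and 2, $N(P)$ should equal $c_1q^{(n-2)P}+c_2q^{\frac n2P}$ plus a constant or lower-order terms, where $c_2$ comes from the boundary of the truncated geometric series. For $n=4$ the two exponents coincide, since $n-2=n/2$. The geometric series in $\deg r$ then has ratio $1$, which produces the factor $P$ in the first term.

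\textbf{Step 4: primitivity and exact degree.} Let $N^*(P)$ be the number of primitive solutions. Then
\[
N^*(P)=\sum_{d \text{ monic}}\mu(d)\,\bigl(N(P-\deg d)-1\bigr)+1,
\]
and the generating function of $\mu$ over $\fqt$ is $1-qu$, so only $\deg d\in\{0,1\}$ contribute, with weights $1$ and $-q$. Morphisms of degree exactly $P$ correspond to primitive tuples with $\max\deg x_i=P$, up to $\fq^\times$. Hence
\[
\#\mor_P(\PP^1,X)(\fq)=\frac{N^*(P+1)-N^*(P)}{q-1}.
\]
Substituting the formula from Step 3 and simplifying the rational functions in $q$ should give the stated constants.

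The main obstacle will be Step 1 together with the truncation in Step 2. The factorisation is exact only when $J$ and $\deg r$ are chosen compatibly with $P$. Near $\deg r\approx P$ the integral $I_r(\theta)$ changes shape, and these boundary contributions are exactly what produce the secondary term $q^{\frac n2 P}$. I would first try $J=P$ and treat the ranges $2\deg r\le P$ and $2\deg r>P$ separately, computing $I_r$ in each range by counting $\fq$-points on the quadric defined by the leading coefficients.
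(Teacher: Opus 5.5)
Your plan follows the paper's skeleton closely. It uses the dissection with $J=P$ and no minor arcs. It uses the exact factorisation $S(a/r+\theta)=|r|^{-n}S_{a,r}(f)S(\theta)$ on every arc with $\deg r\le P$. In case (1) it has $S_r(f)=\varphi(r)|r|^{n/2}$. The truncated geometric sums come from $\sum_{\deg r=\rho}\varphi(r)=(q-1)q^{2\rho-1}$, and ratio $1$ gives the factor $P$ when $n=4$. Finally, M\"obius inversion has only $\deg d\le 1$ contributing.

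The genuine difference is how you evaluate $I_r=\int_{|\theta|<q^{-\deg r-P}}S(\theta)\,d\theta$.
\begin{itemize}
\item The paper applies orthogonality, substitutes $s=t^{-1}$, and re-expands the resulting count as $\sum_k q^{nk}S_{t^{P-\deg r-1-k}}(f)$. So $I_r$ is evaluated by the same Gauss-sum theorem as the singular series, uniformly in all three cases.
\item You count $\{\mathbf x:|\mathbf x|<q^P,\ \deg f(\mathbf x)<P+\deg r\}$ directly. This is not a point count on a single quadric over $\fq$ unless $P-\deg r-1=1$.
\end{itemize}
Writing $\mathbf x=t^{P-1}\mathbf y(s)$, the count equals $q^{n(\deg r+1)}A_m$, where $m=P-\deg r-1$ and
\[
A_m=\#\{\mathbf y \bmod s^m: f(\mathbf y)\equiv 0 \bmod s^m\}.
\]
To evaluate $A_m$ you need Hensel lifting, which is valid since $f$ is nonsingular and $p>2$: a nonzero leading vector on the quadric lifts in $q^{(n-1)(m-1)}$ ways. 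When the leading vector vanishes you need a recursion:
\[
A_0=1,\qquad A_1=c,\qquad A_m=(c-1)q^{(n-1)(m-1)}+q^nA_{m-2},
\]
with $c=q^{n-1}+(q-1)q^{\frac n2-1}$ in case (1). Solving this recursion reproduces the paper's formula for $I_r$ exactly. So your route works, and it exhibits $I_r$ as a local density at the place $t^{-1}$. The paper's trick instead avoids Hensel and the recursion by recycling the Gauss sums it already computed.

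A few minor corrections:
\begin{itemize}
\item No split at $2\deg r$ versus $P$ is needed. Only $\deg r=P$ is special, where $I_r=q^{P(n-2)}$, and $N(P)$ has no constant or further lower-order terms.
\item In Step 2, the factor $(-1)^{n/2}$ comes from the root of unity in $\tau_{\pi^k}$ for odd $k$, via $\tau_\pi^2=\legendre{-1}{\pi}|\pi|$, not from the Legendre symbols.
\item For even $n$, $S_{a,r}(f)$ is independent of $a$, so the $a$-sum is simply $\varphi(r)$; no Ramanujan sums are involved.
\item Your M\"obius formula has a spurious $+1$: the primitive count is $N(P)-qN(P-1)+q-1$. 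This is harmless, since constants cancel in $N^*(P+1)-N^*(P)$.
\end{itemize}
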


For~$n=4$, the formula in Theorem~\ref{thm-quadrat:mor} contains a term of order~$Pq^{2P}$, where the factor~$P$ plays the role of a logarithmic factor in the analogue setting over~$\Q$.

In the second case, we obtain the following result.
\begin{thm}\label{thm-keinQUADRAT:mor}
Let $n$ be even, and let $(-1)^{\frac{n}{2}}\det(f)$ not be a square in $\fq^\times$. For $n=4$ we have
\begin{align*}
   \#\mor_P(\PP^1,X)(\fq)
   =\begin{dcases}
   \frac{q^4-1}{q^2}\,q^{2P} &\textup{if }2\mid P,\\
   0 &\textup{if }2\nmid P,
   \end{dcases}
\end{align*}
and for $n\ge 6$ we have
\begin{align*}
  \#\mor_P(\PP^1,X)(\fq)&=\frac{\left(q^{\frac{n}{2}}+1\right)\left(q^{n-2}-1\right)\left(q^{n-3}-1\right)}{q^{n-2}\left(q^{\frac{n}{2}-2}+1\right)(q-1)}\, q^{P(n-2)}\\
    &\quad+(-1)^P \frac{\left(q^{n-2}-1\right)\left(q^{\frac{n}{2}}+1\right)}{q^{\frac{n}{2}}\left(q^{\frac{n}{2}-2}+1\right)}\,q^{\frac{n}{2}P}.
\end{align*}
\end{thm}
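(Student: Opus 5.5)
The plan follows the same route as for Theorem~\ref{thm-quadrat:mor}. The only change is the sign of the quadratic Gau\ss{} sum attached to $f$, which now introduces a character twist $(-1)^{\deg r}$.

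First we compute $N(P)$ exactly with the function field circle method. Write
\[
N(P)=\int_{\T}S(\alpha)\,d\alpha,\qquad S(\alpha)=\sum_{\deg x_i<P}\psi(\alpha f(x)),
\]
where $\T$ is the unit ball in $\fql$ and $\psi$ is the standard additive character. Dirichlet approximation decomposes $\T$ into arcs around fractions $a/r$ with $r$ monic and $\deg r\le P$. Over function fields these arcs partition $\T$ exactly, so there are no minor-arc error terms. On the arc around $a/r$ the sum factors as a complete sum $S_{a,r}=\sum_{x \bmod r}\psi(af(x)/r)$ times a smooth integral. The integral is evaluated exactly by Poisson summation, which equals counting $x$ in a lattice box; the result is a main contribution plus possibly finitely many dual terms with $\deg\theta$ small.

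Next we diagonalise $f$ over $\fq$ and evaluate $S_{a,r}$ explicitly via the CRT and Hensel lifting. For irreducible $\varpi$, the one-variable Gau\ss{} sum is $\varepsilon(\varpi)|\varpi|^{1/2}$, and the $n$-fold product contributes $\chi\bigl((-1)^{n/2}\det f\bigr)^{\deg\varpi}|\varpi|^{n/2}$, where $\chi$ is the quadratic character of $\fq$. In the present case $\chi\bigl((-1)^{n/2}\det f\bigr)=-1$, so the singular series becomes
\[
\sum_r |r|^{-n}\sum_a S_{a,r}=\sum_r (-1)^{\deg r}c(r),
\]
a twisted Ramanujan-type sum. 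Its generating function is the $L$-function of the quadratic character $\varpi\mapsto(-1)^{\deg\varpi}$, i.e.\ of the constant field extension $\mathbb{F}_{q^2}$. Zeta factors of the form $(1-q^{k}u)^{-1}$ therefore become $(1+q^{k}u)^{-1}$. This is what replaces $q^{n/2}-1$ by $q^{n/2}+1$, and it produces the $(-1)^P$ in front of the secondary term.

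Summing over $\deg r\le P$ gives finite geometric series in $q$ and $-q$. These yield a closed formula for $N(P)$ consisting of a term $q^{P(n-2)}$ and a term $(-1)^Pq^{Pn/2}$. For $n=4$ the two terms merge, and the alternating sum $\sum_{k\le P}(-1)^k$ replaces the $P$-factor of the square case; this gives the parity dichotomy. Finally we pass from $N(P)$ to $\#\mor_P(\PP^1,X)(\fq)$. We restrict to primitive tuples by M\"obius inversion over monic $d$, scaling $P\mapsto P-\deg d$. We then take the max degree exactly $P$ by differencing $P$ and $P+1$, and divide by $q-1$ for units. The M\"obius step multiplies the generating function by $(1-q^{n-2}u)$-type and $(1+q^{n/2}u)$-type factors against the Dirichlet series of $\fqt$. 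Each is a rational manipulation, checked on both terms separately.

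The main obstacle I expect is the exact evaluation of the singular integral and series together with the dual (Poisson) terms. The secondary term $q^{nP/2}$ comes from the boundary of the $r$-sum and the $\theta$-integral, and every constant has to be tracked exactly to obtain an identity rather than an asymptotic. I would first verify the resulting $N(P)$ for small $P$ (e.g.\ $P=0,1$), where the count reduces to $\#X(\fq)$-type data with $\#X(\fq)=(q^{n-1}-1)/(q-1)-q^{n/2-1}$ for the non-split quadric, to pin down the constants.
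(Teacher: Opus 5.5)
Your plan is essentially the paper's proof: the exact arc decomposition of $\T$, the factorisation $S(a/r+\theta)=|r|^{-n}S_{a,r}(f)S(\theta)$, the Gau\ss{}-sum evaluation $S_r(f)=(-1)^{\deg(r)}\varphi(r)|r|^{\frac{n}{2}}$, finite (alternating) geometric sums giving $N(P)$ exactly, and then $\#\mor_P(\PP^1,X)(\fq)=\frac{1}{q-1}\bigl(N(P+1)-(q+1)N(P)+qN(P-1)\bigr)$ from M\"obius inversion and differencing (the M\"obius step is multiplication by $1-qu$, not by the pole factors you name). One heuristic needs correcting before you execute it: the paper evaluates $I_r$ exactly (after substituting $s=t^{-1}$) as $\frac{|r|^nq^{n+1}}{q^{2P}}\sum_{k=0}^{P-\deg(r)-1}q^{nk}S_{t^{P-\deg(r)-k-1}}(f)$, and since here $S_{t^j}(f)=(-1)^j\frac{q-1}{q}q^{j(\frac{n}{2}+1)}$ for $j\ge 1$, the singular integral is itself twisted---it is this factor at infinity, not the $L$-function (which only supplies $\frac{1+q^{1-n/2}}{1+q^{2-n/2}}$), that produces $q^{\frac{n}{2}}+1$ and the $(-1)^{P-\deg(r)-1}$ term feeding the secondary term.
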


As we can see in Theorem~\ref{thm-keinQUADRAT:mor}, the formulas in the second case depend on whether $P$ is even or odd. For $n\ge 6$ this dependence only appears in the second order term, whereas for $n=4$ there is no asymptotic formula independent of the parity of $P$ since there do not even exist any morphisms $\PP^1\to X$ of degree~$P$ whenever $P$ is odd.

Geometrically, this reflects the fact that in this case the variety~$X$ is a non-split quadric surface in~$\PP^3$ as $\det(f)$ is not a square in~$\fq^\times$. Hence, the Picard group of~$X$ has rank~$1$, which implies that $X$ can only contain curves of even degree. In particular, $X$ does not admit any morphisms $\PP^1\to X$ of odd degree. By contrast, the condition $\det(f)\in (\fq^{\times})^2$ in Theorem~\ref{thm-quadrat:mor} ensures that in that case $X$ is a split quadric and thus isomorphic to 
$\PP^1\times\PP^1$ so that its Picard group has rank~$2$.

In the third case, where $n$ is odd, our result is similar to Theorem~\ref{thm-keinQUADRAT:mor}, in that the formulas for the cardinality of $\mor_P(\PP^1,X)(\fq)$ depend on the parity of~$P$.

\begin{thm}\label{thm-nUNGERADE:mor}
Let $n$ be odd. For $n=3$ we have
\begin{align*}
   \#\mor_P(\PP^1,X)(\fq)
   =\begin{dcases}
   \frac{q^2-1}{q}\, q^P &\textup{if }2\mid P,\\
   0 &\textup{if }2\nmid P,
   \end{dcases}
\end{align*}
and for $n\ge 5$ we have
\begin{align*}
   \#\mor_P(\PP^1,X)(\fq)=
   \begin{dcases}
       \frac{(q^{n-1}-1)(q^{n-2}-1)}{q^{n-2}(q-1)}\,q^{P(n-2)}&\textup{if }2\mid P,\\
       \frac{(q^{n-1}-1)(q^{n-2}-1)}{q^{n-2}(q-1)}\,q^{P(n-2)}-\frac{q^{n-1}-1}{q^{\frac{n-1}{2}}}\,q^{\frac{n-1}{2}P}&\textup{if }2\nmid P.
   \end{dcases}
\end{align*}
\end{thm}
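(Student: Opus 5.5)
We prove Theorem~\ref{thm-nUNGERADE:mor} along the same lines as Theorems~\ref{thm-quadrat:mor} and~\ref{thm-keinQUADRAT:mor}. First we compute $N(P)$ exactly with the function field circle method, and then we pass to $\#\mor_P(\PP^1,X)(\fq)$ by Möbius inversion over primitivity.

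\textbf{Step 1: the circle method for $N(P)$.} Over $\fql$ with the unit interval $\T$ we write
\[
N(P)=\int_{\T}S(\alpha)\,d\alpha,\qquad S(\alpha)=\sum_{\deg x_i<P}\psi(\alpha f(x)).
\]
We diagonalise $f$ over $\fq$, which is possible since the characteristic is odd. By Dirichlet's approximation, $\T$ is a disjoint union of arcs around $a/r$ with $r$ monic and $\abs{r\alpha-a}<\abs{r}^{-1}q^{-Q}$; we take $Q$ so that the arcs tile $\T$ exactly, which is a feature of the non-archimedean setting. On each arc, $S$ factors exactly as $\abs{r}^{-n}q^{nP}$ times the complete Gauß sum $S_{a,r}=\sum_{x\bmod r}\psi(af(x)/r)$ times an oscillatory integral over $\beta=\alpha-a/r$. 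That integral can be computed explicitly: it is a product of one-dimensional quadratic integrals, each supported on a ball, with a value given by an explicit Gauß sum over $\fq$ depending on the parity of $\deg\beta$. So $N(P)$ becomes a finite sum over $r$ and over $\deg\beta$ of explicit terms, with no error term.

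\textbf{Step 2: the singular series.} For odd $n$ we evaluate $\sum_{a\bmod r}S_{a,r}$ multiplicatively over monic irreducibles $\varpi$. The local sum at $\varpi^k$ vanishes for odd $k$: it involves $\sum_a\legendre{a}{\varpi}$ raised to an odd power times the quadratic-character twist, and this sum is zero. For even $k$ it equals $\phi(\varpi^k)\abs{\varpi}^{k(n+1)/2}$ up to the normalisation, since the Legendre symbols square away. The singular series is therefore a zeta quotient, essentially $\zeta_{\fqt}(s)/\zeta_{\fqt}(2s)$, evaluated at the relevant shift. Because $\zeta_{\fqt}(s)=1/(1-q^{1-s})$ is rational, the truncated sums over $\deg r\le Q$ are finite geometric series. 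The $\fq$-Gauß sum $\sum_{c\in\fq}\psi(c x^2)$ appears to an odd power, so its sign factor $\legendre{-1}{q}$ combined with $\det f$ cancels in the product over $r$ and $\beta$. This is why, unlike Theorems~\ref{thm-quadrat:mor} and~\ref{thm-keinQUADRAT:mor}, no dependence on $\det f$ survives. What does survive is a parity dependence on $P$: the oscillatory integral produces a factor $1$ or $q^{1/2}$ with a sign pattern according to the parity of $\deg r$ relative to $P$, and only even $\deg r$ contribute.

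\textbf{Step 3: from $N(P)$ to morphisms.} $\#\mor_P(\PP^1,X)(\fq)$ counts primitive solutions with $\max\deg x_i=P$, divided by $q-1$. Let $N^*(P)$ be the primitive count with $\deg<P$. Then
\[
N(P)=1+\sum_{d\ge0}\#\{\text{monic }g,\ \deg g=d\}\,N^*(P-d),
\]
which inverts via the Möbius function of $\fqt$: $\sum_{\deg g=d}\mu(g)$ equals $1$, $-q$, $0$ for $d=0$, $d=1$, $d\ge2$. Hence $N^*(P)=N(P)-qN(P-1)-(1-q)$, and
\[
\#\mor_P=\frac{N^*(P+1)-N^*(P)}{q-1}.
\]
Substituting the closed forms, which are linear combinations of $q^{P(n-2)}$, $q^{P(n-1)/2}$ and parity-dependent terms, and simplifying should give the stated expressions. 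For $n=3$, the main term and the secondary term coincide in order, $q^{P}$, and must combine to give $0$ or $\frac{q^2-1}{q}q^P$ according to parity.

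The main obstacle is the bookkeeping in Step 2. We need the exact value of the oscillatory integral for each range of $\deg\beta$ and its interaction with the Gauß sums, with the parities of $\deg r$, $\deg\beta$ and $P$ tracked carefully, because all the secondary terms come from there. We would first do this for a diagonal form with $n=3$, where the answer is known independently (conics are $\PP^1$ when isotropic, which over $\fq$ they always are), and use it as a check before treating general odd $n$.
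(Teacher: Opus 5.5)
Your architecture matches the paper's. It has the same ingredients: the exact arc decomposition of $\T$, the factorisation $S(a/r+\theta)=\abs{r}^{-n}S_{a,r}(f)S(\theta)$, the multiplicative evaluation of $S_r(f)$, and the Möbius step, which you carry out correctly. There is one slip in the arcs. They are $\abs{\alpha-a/r}<\abs{r}^{-1}q^{-P}$, i.e.\ $\abs{r\alpha-a}<q^{-P}$ with $\abs{r}\le q^P$; with your condition $\abs{r\alpha-a}<\abs{r}^{-1}q^{-Q}$ they do not cover $\T$. Also, every $Q$ gives a tiling; what constrains $Q$ is the exact factorisation, and the paper takes $Q=P$.

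Treating $S(\theta)$ as an oscillatory integral is a legitimate variant of the paper's $s=t^{-1}$ device. That device rewrites $I_r$ as a combination of the complete sums $S_{t^k}(f)$, so Theorem~\ref{thm-S_r(f)} handles singular series and singular integral alike. Your variant works because $T(\theta)=\int_{\abs{x}<q^P}\psi(\theta x^2)\,dx$ holds exactly for $\abs{\theta}\le q^{-P}$.

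The gap is that the proposal stops where the proof starts. You never compute $I_r$ or $N(P)$, and ``simplifying should give the stated expressions'' is the entire theorem. Its content is exactly that computation:
\begin{itemize}
\item the two parity-dependent closed forms for $N(P)$ (Theorem~\ref{thm-nUngerade:N(P)});
\item the cancellation of the $Pq^P$ terms for $n=3$ in $N(P+1)-(q+1)N(P)+qN(P-1)$;
\item the fact that the coefficient of $q^{\frac{n-1}{2}P}$ vanishes for even $P$ and equals $-q^{\frac{1-n}{2}}(q^{n-1}-1)$ for odd $P$.
\end{itemize}
Your $n=3$ check via $X\cong\PP^1$ embedded by $\mathcal{O}(2)$ is a good consistency test, but it says nothing about $n\ge 5$.

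In addition, your account of where the parity dependence and secondary terms come from is wrong, so the plan points the bookkeeping in the wrong direction. For odd $n$, no factor $q^{1/2}$ and no sign pattern survives in the singular integral. Nor does $\det f$ disappear by a cancellation ``across $r$ and $\beta$''. Write $I_r=q^{P(n-2)}\int_{\abs{\gamma}<q^{P-\deg r}}\prod_i J(a_i\gamma)\,d\gamma$ with $J(\gamma)=\int_{\T}\psi(\gamma x^2)\,dx$. Then:
\begin{itemize}
\item $J=1$ for $\abs{\gamma}\le 1$;
\item $J=q^{-m/2}$ on $\abs{\gamma}=q^m$ with $m\ge 2$ even;
\item for odd $m$, the product $\prod_i J(a_i\gamma)$ carries $\chi(c)^n=\chi(c)$, where $c$ is the leading coefficient of $\gamma$ and $\chi$ the quadratic character of $\fq^\times$. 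These shells therefore integrate to zero, by the same mechanism that gives $S_{\varpi^k}(f)=0$ for odd $k$.
\end{itemize}
Hence
\[
I_r=q^{P(n-2)}\Bigl(q+(q-1)\sum_{k=1}^{\floor{\frac{P-\deg r-1}{2}}}q^{k(2-n)}\Bigr),
\]
which agrees with the paper's expression.

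The parity of $P$ enters only in three places: through this floor; through the restriction to square moduli $r=s^2$ (not merely even degree) with $\deg s\le\floor{\frac{P-1}{2}}$; and through the extra layer $\deg r=P$, which contains squares only when $P$ is even.

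Two further corrections. The local factor for even $k$ is $\varphi(\varpi^k)\abs{\varpi}^{kn/2}$, i.e.\ $S_{s^2}(f)=\varphi(s)\abs{s}^{n+1}$, not $\varphi(\varpi^k)\abs{\varpi}^{k(n+1)/2}$. Consequently the singular series is $\sum_s\varphi(s)\abs{s}^{1-n}$. This is $\zeta_{\fqt}(w-1)/\zeta_{\fqt}(w)$ at $w=n-1$, not $\zeta(s)/\zeta(2s)$. It equals $\frac{1-q^{2-n}}{1-q^{3-n}}$ for $n\ge 5$, and for $n=3$ its truncation grows linearly in $P$, which is the source of $Pq^P$. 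With these corrections your route reproduces Lemma~\ref{lem-nUNGERADE:1.Summe}. After that, the second difference still has to be carried out.
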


The main terms in the formulas in Theorems~\ref{thm-quadrat:mor}, \ref{thm-keinQUADRAT:mor} and~\ref{thm-nUNGERADE:mor} are consistent with the idea of Manin's conjecture on the distribution of rational points of bounded height on Fano varieties~\cite{franke1989}. While the conjecture has originally been formulated over number fields, our results are obtained over function fields, where analogous versions of Manin's conjecture have been studied by Peyre~\cite{Peyre2012}, and more recently, by Lehmann and Tanimoto~\cite{lehmann2026}.

It would be interesting to look at the explicit formulas obtained in Theorems~\ref{thm-quadrat:mor}, \ref{thm-keinQUADRAT:mor} and~\ref{thm-nUNGERADE:mor} from a cohomological point of view. In particular, using the Grothendieck--Lefschetz trace formula, the coefficients appearing in these expressions can be interpreted in terms of the action of the Frobenius morphism on the $\ell$-adic étale cohomology with compact support of the scheme~$\mor_P(\PP^1,X)$ defined over $\fq$. This raises the question whether it is possible to give a more explicit description of the cohomology of $\mor_P(\PP^1,X)$, or of its Kontsevich compactification $\overline{\mathcal{M}}_{0,0}(X,P)$, in the spirit of the work of Bergström and Minabe~\cite{bergström2013}.

\begin{ack}
    The author wants to thank Jakob Glas for many helpful conversations. She is very grateful for his guidance and valuable feedback throughout this project. Moreover, the author would like to thank her supervisor Ulrich Derenthal for useful comments. This work was funded by the Deutsche Forschungsgemeinschaft (DFG, German Research Foundation) —-- RTG~2965 --— Project number~512730679.
\end{ack}

\section{Background on Function Fields}
Let $p$ be a prime number and $q=p^{\nu}$ for some~$\nu\in\N$. Then, up to isomorphism, $\fq$ is the unique finite field of order~$q$. Forming the field of fractions of the polynomial ring~$\fqt$, we get the function field~$\fq(t)$. The element~$t^{-1}$ now induces the absolute value~$\abs{\cdot}_\infty$ on~$\fq(t)$ by~$\abs{0}_\infty =0$ and
\[
   \abs{\frac{a}{b}}_\infty =q^{\deg(a)-\deg(b)}
\]
for all~$a,b\in\fqt\setminus\{0\}$. As~$\fq(t)$ is a field of characteristic~$\cha(\fq)=p>0$, this absolute value is non-archimedean. By completing $\fq(t)$ with respect to~$\abs{\cdot}_\infty$, we obtain the local field~$\fql$. Every element~$\alpha\in\fql$ can be written as
\[
   \alpha=\sum_{i=-\infty}^M a_it^i
\]
for some~$M\in\Z$ and coefficients $a_i\in\fq$ for~$i\le M$. If $\alpha\neq 0$, there is a maximal integer $M^\prime \le M$ such that $a_{M^\prime}\neq 0$. In this case, we define $\ord(\alpha)\coloneq M^\prime$, otherwise we define $\ord(0)\coloneq -\infty$.

Let now $\abs{\cdot}$ denote the extension of the absolute value $\abs{\cdot}_\infty$ from~$\fq(t)$ to~$\fql$. Then,
\[
   \abs{\alpha}=q^{\ord(\alpha)}
\]
for all~$\alpha\in\fql$. For~$n\in\N$ we can extend $\abs{\cdot}$ to~$\fql^n$ by setting
\[
   \abs{\boldsymbol{\alpha}}=\max\left\{\abs{\alpha_i}:i=1,\dots,n\right\}
\]
for all~$\boldsymbol{\alpha}=\left(\alpha_1,\dots,\alpha_n\right)\in\fql^n$. 

Since $\fql$ is a local field, $\fql$ is a locally compact abelian group. Hence, there exists the Haar measure, which is unique up to multiplication by a non-zero scalar. Inside the discrete valuation ring of $\fql$, there is the maximal ideal
\[
   \T\coloneq\left\{\alpha\in\fql : \abs{\alpha}<1\right\}.
\]
We can normalise the Haar measure such that $\T$ has Haar measure~$1$. For all $M\in\Z$ and $\beta\in\fql$, we then have
\begin{align}
   \int\limits_{B(\beta;M)}1 \,\mathrm{d}\alpha &=q^M,\label{gl-integralÜberBall}
\end{align}
where we denote
\[
    B(\beta;M)\coloneq\left\{\alpha\in\fql : \abs{\alpha-\beta}<q^M\right\}.
\]
We now consider the non-trivial unitary additive character
\[
   e_q\colon \fq\to\C^\times,\quad a\mapsto\exp\!\left(\frac{2\pi i \tr_{\fq /\mathbb{F}_p}(a)}{p}\right),
\]
where $\tr_{\fq / \mathbb{F}_p}\colon \fq\to\mathbb{F}_p$ denotes the trace in the field extension $\fq=\mathbb{F}_{p^\nu}\supseteq\mathbb{F}_p$. This induces the map
\[
   \psi\colon \fql\to\C^\times,
   \quad \sum_{i=-\infty}^{M} a_it^i\mapsto e_q(a_{-1})
\]
satisfying the following properties~{\cite[Lemma~5.2]{browning2021}}.

\begin{lem}\label{lem-psi}
\begin{itemize}
    \item[(i)] The map $\psi$ is trivial on $\fqt$, i.e. $\psi(\alpha)=1$ for all~$\alpha\in\fqt$.
    \item[(ii)] We have $\psi(\alpha)=\psi(\beta)$ for all $\alpha,\beta\in \fql$ with $\ord(\alpha-\beta)<-1$.
    \item[(iii)] The map $\psi$ is a non-trivial unitary additive character of $\fql$.
\end{itemize}
\end{lem}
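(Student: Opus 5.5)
The three properties follow directly from the definition of $\psi$ as $e_q$ applied to the coefficient of $t^{-1}$. The one fact used repeatedly is that the map $c\colon \fql\to\fq$, $\sum_i a_it^i\mapsto a_{-1}$, is $\fq$-linear and in particular additive. Indeed, addition of Laurent series in $t^{-1}$ is coefficientwise: if $\alpha=\sum_{i\le M}a_it^i$ and $\beta=\sum_{i\le M'}b_it^i$, then after padding with zero coefficients the coefficient of $t^{-1}$ in $\alpha+\beta$ is $a_{-1}+b_{-1}$. Hence $\psi=e_q\circ c$.

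For (i), take $\alpha\in\fqt$. Its expansion has $a_i=0$ for all $i<0$, so in particular $a_{-1}=0$. Therefore $\psi(\alpha)=e_q(0)=\exp(0)=1$.

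For (ii), write $\alpha-\beta=\sum_i d_it^i$ with $\ord(\alpha-\beta)<-1$. By definition of $\ord$, $d_i=0$ for all $i>\ord(\alpha-\beta)$, and in particular $d_{-1}=0$; this also covers $\alpha=\beta$, where $\ord=-\infty$. Additivity of $c$ then gives $c(\alpha)=c(\beta)+d_{-1}=c(\beta)$, so $\psi(\alpha)=\psi(\beta)$.

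For (iii), additivity of $\psi$ follows from additivity of $c$ together with that of $e_q$. The latter holds because $\tr_{\fq/\mathbb{F}_p}$ is additive and $x\mapsto \exp(2\pi i x/p)$ is a homomorphism $\mathbb{F}_p\to\C^\times$, well defined since it only depends on $x \bmod p$. So $\psi(\alpha+\beta)=\psi(\alpha)\psi(\beta)$. The values of $\psi$ are $p$-th roots of unity, so $\psi$ is unitary.

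Non-triviality of $\psi$ is the only step requiring an external input: we need $e_q$ to be non-trivial, i.e. $\tr_{\fq/\mathbb{F}_p}\not\equiv 0$. The trace is given by the polynomial $x+x^p+\dots+x^{p^{\nu-1}}$ of degree $p^{\nu-1}<q$, so it cannot vanish on all of $\fq$. Pick $a\in\fq$ with $\tr(a)\neq0$; then $e_q(a)=\exp(2\pi i\tr(a)/p)\neq 1$, and therefore $\psi(at^{-1})=e_q(a)\neq1$.

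Continuity of $\psi$, which a character of a locally compact group should have, follows from (ii): $\psi$ is constant on the open balls $\beta+\{\gamma:\abs{\gamma}<q^{-1}\}$, so it is locally constant.
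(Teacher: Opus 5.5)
Your proof is correct. The paper does not prove this lemma itself; it quotes all three properties from \cite[Lemma~5.2]{browning2021}, so your proposal replaces a citation with a direct check from the definition.

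Factoring $\psi=e_q\circ c$ through the $\fq$-linear coefficient map $c$ is the natural way to do this check. It shows that (i) and (ii) only say that the $t^{-1}$-coefficient vanishes, for elements of $\fqt$ and for elements of order less than $-1$ respectively. Part (iii) then reduces to additivity of $c$ and $e_q$ together with $\tr_{\fq/\mathbb{F}_p}\not\equiv 0$, which your degree bound $p^{\nu-1}<q$ settles.

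The citation keeps the paper short, while your version makes it self-contained at this point. Your closing observation that (ii) makes $\psi$ locally constant is a worthwhile addition: continuity of $\psi$ is implicitly used whenever $\psi$ is integrated against Haar measure in the orthogonality relations that follow.
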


Moreover, for all $x\in\fqt$ and $M\in\N_0$, the character $\psi$ satisfies the orthogonality relation 
\begin{align}
   \int\limits_{B(0;-M)} \psi(\alpha x)\,\mathrm{d}\alpha
   =\begin{dcases}
   \frac{1}{q^M}&\textup{if }\deg(x)<M,\\
   0&\textup{else,}
   \end{dcases}\label{gl-orthogonalitätsrelation.allgemein}
\end{align}
which in particular implies
\begin{align}
    \int\limits_{\T} \psi(\alpha x)\,\mathrm{d}\alpha=\begin{dcases}
    1&\textup{if }x=0,\\
    0&\textup{else}
    \end{dcases}\label{gl-orthogonalitätsrelation.spezialfall}
\end{align}
by setting $M=0$ (see~{\cite[Lemma~5.5 and Corollary~5.6]{browning2021}}).

\section{The Circle Method}\label{kap-kreismethode}
From now on, we always assume $\cha(\fq)=p>2$. Let $n\in\N$, and let $f\in\fq[X_1,\dots,X_n]$ be a quadratic form over $\fqt$ whose coefficients are constant, i.e. are elements of $\fq^\times$. 

For $P\in\N$ we define the counting function $N\colon \N\to\N_0$ by
\begin{align*}
   N(P)&\coloneq\#\left\{\mathbf{x}\in\fqt^n :\begin{array}{l}
   \abs{\mathbf{x}}<q^P,\\
   f(\mathbf{x})=0
   \end{array} \right\}.
\end{align*}

Since $\cha(\fq)>2$, there is a diagonal quadratic form $f^\prime\in\fq[X_1,\dots,X_n]$ such that $f$ is equivalent to~$f^\prime$. Hence, there exists an element $\mathbf{S}\in \GL_n(\fq)$ such that $ f(\mathbf{x})=f^\prime(\mathbf{S}\mathbf{x})$ for all $\mathbf{x}\in\fqt^n$. Using that the absolute value $\abs{\cdot}$ is non-archimedean and thus satisfies the ultrametric inequality, we can show that for all $P\in\N$ the value $N(P)$ does not change if we replace $f$ by $f^\prime$. Therefore, the counting function is invariant under diagonalisation, and we may without loss of generality assume that $f$ is already diagonalised, i.e. that there are $a_1,\dots,a_n\in\fq^\times$ such that
\[
   f(X_1,\dots,X_n)=a_1X_1^2+\dots +a_nX_n^2.
\]

We further define
\begin{align*}
   \tilde{N}(P)
    &\coloneq\frac{1}{q-1}\cdot \#\left\{(x_1,\dots,x_n)\in\fqt^n :\begin{array}{l}
   \abs{(x_1,\dots,x_n)}<q^P,\\
   f(x_1,\dots,x_n)=0,\\
   \gcd(x_1,\dots,x_n)=1
   \end{array} \right\}
\end{align*}
to count all primitive zeros of $f$ in $\fqt^n$ of degree less than $P$ up to multiplication by a unit, i.e. an element of $\fq^\times$. 

Let $\mu$ denote the function field analogue of the Möbius function. Applying Möbius inversion then yields
\begin{align*}
    \tilde{N}(P)
    &=\frac{1}{q-1}\sum_{\substack{r\in\fqt\\ \abs{r}\le q^P\\ r\textup{ monic}}} \mu(r)\left(N(P-\deg(r))-1\right).
\end{align*}
Since we have 
\begin{align*}
    \sum_{\substack{r\in\fqt\\ \deg(r)=\rho\\ r\textup{ monic}}}\mu(r)
    =\begin{dcases}
        1&\textup{if }\rho=0,\\
        -q&\textup{if }\rho=1,\\
        0&\textup{if }\rho\ge 2
    \end{dcases}
\end{align*}
for all $\rho\in\N_0$ (see~{\cite[Exercise~12]{rosen2002}}), it follows
\begin{align}
    \tilde{N}(P)
    &=\frac{N(P)-q N(P-1)}{q-1}+1\label{gl-tildeN(P)}.
\end{align}

Let $X\subseteq \PP^{n-1}$ be the variety defined by $f$. A morphism $\PP^1\to X$ of degree $P$ is given by a primitive $n$-tuple $(g_0,\dots,g_{n-1})$ where $g_0,\dots,g_{n-1}$ are homogeneous binary forms of degree exactly $P$ satisfying $f(g_0,\dots,g_{n-1})=0$. Such morphisms correspond to $\fq(t)$-rational points on $X$ of naive height $q^P$.

As $\tilde N(P+1)$ counts the primitive solutions of degree at most $P$, it follows from~\eqref{gl-tildeN(P)} that the number of morphisms from $\PP^1$ to $X$ of degree exactly $P$ amounts to
\begin{align}
    \tilde{N}(P+1)-\tilde{N}(P)
    &=\frac{1}{q-1}\left(N(P+1)-(q+1)N(P)+qN(P-1)\right).\label{gl-mor_P}
\end{align}
These morphisms are precisely the $\fq$-points of the scheme $\mor_P(\PP^1,X)$, and their number is hence given by $\#\mor_P(\PP^1,X)(\fq)$. To determine this cardinality and thus count $\fq(t)$-rational points on $X$ of height exactly $q^P$, it therefore suffices to compute $N(P)$ and insert into~\eqref{gl-mor_P}. 

Let now $P\in\N$ be fixed. We define the exponential sums
\begin{align*}
    T(\alpha)&\coloneq \sum_{\substack{x\in\fqt\\\abs{x}<q^P}} \psi\!\left(\alpha x^2\right),\\
   S(\alpha)&\coloneq \sum_{\substack{\mathbf{x}\in\fqt^n\\\abs{\mathbf{x}}<q^P}} \psi\!\left(\alpha f(\mathbf{x})\right)
\end{align*}
for $\alpha\in\fql$. Then, we have
\begin{align}
   S(\alpha)&=\prod_{i=1}^n T\!\left(a_i\alpha\right),\label{gl-S(alpha)=produkt}
\end{align}
and since $\psi$ satisfies the orthogonality relation in~\eqref{gl-orthogonalitätsrelation.spezialfall}, we can write
\[
   N(P)=\int\limits_{\T} S(\alpha)\,\mathrm{d}\alpha.
\]

From the function field analogue of Dirichlet's approximation theorem~{\cite[Lemma~5.7]{browning2021}} and the ultrametric inequality, that the absolute value $\abs{\cdot}$ satisfies, it follows that the set $\T$ can be written as the disjoint union
\[
    \T=\bigsqcup_{\substack{r\in\fqt\\
   \abs{r}\le q^P\\
   r\textup{ monic}}}
   \bigsqcup_{\substack{ a\in\fqt\\
   \abs{a} < \abs{r}\\
   \gcd (a,r)=1  }}
   \left\{ \alpha\in\T : \abs{r\alpha-a}<\frac{1}{q^P}  \right\}
\]
(see~{\cite[Lemma~5.8]{browning2021}}). Thus, we obtain
\begin{align}
    N(P)
    &= \sum_{\substack{r\in\fqt\\
    \abs{r}\le q^P\\
    r\textup{ monic}}}
    \sum_{\substack{ a\in\fqt\\
    \abs{a} < \abs{r}\\
    \gcd (a,r)=1}}
    \int\limits_{\left\{\theta\in\T:\abs{\theta}<\frac{1}{\abs{r}q^{P}}\right\}} 
    S\!\left(\frac{a}{r}+\theta\right)\,\mathrm{d}\theta.\label{gl-N(P)=SumSumInt}
\end{align}
Our goal in this section is to express $N(P)$ in terms of exponential sums of the form
\[
   S_{a,r}(f)\coloneq \sum_{\substack{\mathbf{b}\in\fqt^n \\ \abs{\mathbf{b}}<\abs{r}}} \psi\!\left(\frac{af(\mathbf{b})}{r}\right)
\]
for coprime $a,r\in\fqt$ with $r\neq 0$. Using quadratic Gauß sums, we will be able to compute these sums, which we will do in section~\ref{kap-quadratischeGaußsummen}. 

We notice that by defining
\[
    S_{a,r}\coloneq\sum_{\substack{x\in\fqt \\ \abs{x}<\abs{r}}} \psi\!\left(\frac{ax^2}{r}\right),
\]
we can factorise $S_{a,r}(f)$ into
\begin{align}
    S_{a,r}(f)&=\prod_{i=1}^n S_{a_ia,r},\label{gl-S_a,r(f)=produkt}
\end{align}
in analogy with~\eqref{gl-S(alpha)=produkt}, where we write $S(\alpha)$ as the product of $n$ exponential sums corresponding to the monomials of $f$.

To obtain an expression of $N(P)$ in terms of exponential sums $S_{a,r}(f)$, we begin by examining the integrand of the integrals appearing in~\eqref{gl-N(P)=SumSumInt}.

\begin{lem}\label{lem-S(a/r+theta)}
Let $a,r\in\fqt$ be coprime with $\abs{a}<\abs{r}\le q^P$, and let $\theta\in\T$ such that $\abs{\theta}<\frac{1}{\abs{r}q^{P}}$. Then, we have
\[
    S\!\left(\frac{a}{r}+\theta\right) =\frac{S_{a,r}(f)\,S(\theta)}{\abs{r}^n}.
\]
\end{lem}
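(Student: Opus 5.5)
The plan is to run the standard "split into residue classes modulo $r$" argument. Since $S$ factors as in~\eqref{gl-S(alpha)=produkt} and $S_{a,r}(f)$ factors as in~\eqref{gl-S_a,r(f)=produkt}, the product structure is compatible with the claimed identity. It is nevertheless no harder to work directly with $\mathbf{x}\in\fqt^n$.

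Write $\rho=\deg(r)\le P$. Every $\mathbf{x}\in\fqt^n$ with $\abs{\mathbf{x}}<q^P$ can be written uniquely as $\mathbf{x}=\mathbf{b}+r\mathbf{y}$ by division with remainder. Here $\abs{\mathbf{b}}<\abs{r}$ and $\abs{\mathbf{y}}<q^{P-\rho}$. Conversely, every such pair $(\mathbf{b},\mathbf{y})$ gives $\abs{\mathbf{x}}<q^P$ by the ultrametric inequality, so this is a bijection. We then have
\[
\psi\!\left(\left(\frac{a}{r}+\theta\right)f(\mathbf{x})\right)=\psi\!\left(\frac{a f(\mathbf{b})}{r}\right)\psi\!\left(\theta f(\mathbf{x})\right),
\]
because $f(\mathbf{b}+r\mathbf{y})\equiv f(\mathbf{b}) \bmod r$. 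Indeed, $\frac{a}{r}\bigl(f(\mathbf{x})-f(\mathbf{b})\bigr)\in\fqt$, so Lemma~\ref{lem-psi}(i) and additivity give this factorisation.

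The key step is to show that $\psi(\theta f(\mathbf{x}))$ does not depend on $\mathbf{b}$. That is, we want $\psi(\theta f(\mathbf{b}+r\mathbf{y}))=\psi(\theta f(r\mathbf{y}))$. We have
\[
f(\mathbf{b}+r\mathbf{y})-f(r\mathbf{y})=f(\mathbf{b})+2B(\mathbf{b},r\mathbf{y}),
\]
where $B$ is the associated bilinear form (for diagonal $f$ this is just $\sum_i a_i(b_i^2+2b_i r y_i)$). Its absolute value is at most $\max(\abs{r}^2/q^2,\abs{r}q^{P-1}/q)\le \abs{r}q^{P-2}$, using $\abs{r}\le q^P$. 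Multiplying by $\abs{\theta}<\abs{r}^{-1}q^{-P}$ gives something of absolute value $<q^{-2}$, i.e.\ of order $<-1$. Lemma~\ref{lem-psi}(ii) then gives the desired equality. This degree bookkeeping is the only delicate point, and I expect it to be the main obstacle. One must use strict inequalities carefully: $\abs{b_i}\le\abs{r}/q$ and $\abs{r y_i}\le q^{P-1}$, and the hypothesis $\abs{r}\le q^P$ is needed to control the $f(\mathbf{b})$ term.

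Hence
\[
S\!\left(\frac{a}{r}+\theta\right)=S_{a,r}(f)\sum_{\abs{\mathbf{y}}<q^{P-\rho}}\psi\!\left(\theta r^2 f(\mathbf{y})\right).
\]
It remains to show that this last sum equals $S(\theta)/\abs{r}^n$. Apply the same argument with $a=0$ and modulus $r$: decompose $\mathbf{x}=\mathbf{b}+r\mathbf{y}$ in $S(\theta)$. The bound above shows $\psi(\theta f(\mathbf{x}))=\psi(\theta r^2 f(\mathbf{y}))$ for every $\mathbf{b}$, since the argument did not use $a$. Summing over the $\abs{r}^n$ choices of $\mathbf{b}$ gives $S(\theta)=\abs{r}^n\sum_{\mathbf{y}}\psi(\theta r^2 f(\mathbf{y}))$. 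Combining the two displays yields the claim.
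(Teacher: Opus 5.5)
Your proposal is correct and follows essentially the same route as the paper. You decompose $\mathbf{x}=\mathbf{b}+r\mathbf{y}$ and use Lemma~\ref{lem-psi} to split off $\psi(af(\mathbf{b})/r)$. An ultrametric bound then shows $\psi(\theta f(\mathbf{x}))=\psi(\theta r^2 f(\mathbf{y}))$, and rerunning the argument with $a=0$ identifies the remaining sum with $S(\theta)/\abs{r}^n$. The only difference is that the paper does this for the one-variable sum $T$ and then multiplies over the diagonal coefficients via~\eqref{gl-S(alpha)=produkt} and~\eqref{gl-S_a,r(f)=produkt}, while you work with vectors directly. Your bound $<q^{-2}$ is slightly sharper than the paper's $<q^{-1}$, and either suffices.
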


\begin{proof}
First, we show
\begin{align}
   T\!\left(\frac{a}{r}+\theta\right)&=\frac{S_{a,r}\,T(\theta)}{\abs{r}}.\label{gl-T(a/r+theta)=S_arT(theta)}
\end{align}
For all $x\in\fqt$ with $\abs{x}<q^P$, there are unique $b,y\in\fqt$ with $\abs{b}<\abs{r}$ and $\abs{y}<\frac{q^P}{\abs{r}}$ such that $x=b+ry$. Hence, we can write
\begin{align*}
   T\!\left(\frac{a}{r}+\theta\right)
   &=\sum_{\substack{x\in\fqt \\ \abs{x}<q^P}} \psi\!\left(\left(\frac{a}{r}+\theta\right)x^2\right)\\
   &=\sum_{\substack{b\in\fqt \\ \abs{b}<\abs{r}}} \sum_{\substack{y\in\fqt \\ \abs{y}<\frac{q^P}{\abs{r}}}} \psi\!\left(\left(\frac{a}{r}+\theta\right)\left(b+ry\right)^2\right)\\
   &=\sum_{\substack{b\in\fqt \\ \abs{b}<\abs{r}}} \sum_{\substack{y\in\fqt \\ \abs{y}<\frac{q^P}{\abs{r}}}} \psi\!\left(\frac{a\left(b+ry\right)^2}{r}\right) \psi\!\left(\theta\left(b+ry\right)^2\right).
\end{align*}
According to Lemma~\ref{lem-psi}, we have
\begin{align*}
   \psi\!\left(\frac{a\left(b+ry\right)^2}{r}\right)
   &=\psi\!\left(\frac{ab^2}{r}\right)\psi\!\left(2aby+ ary^2\right)
   =\psi\!\left(\frac{ab^2}{r}\right)
\end{align*}
for all $b\in\fqt$, and thus
\begin{align*}
   T\!\left(\frac{a}{r}+\theta\right)
   &=\sum_{\substack{b\in\fqt \\ \abs{b}<\abs{r}}} \psi\!\left(\frac{ab^2}{r}\right) \sum_{\substack{y\in\fqt \\ \abs{y}<\frac{q^P}{\abs{r}}}} \psi\!\left(\theta\left(b+ry\right)^2\right).
\end{align*}
Suppose now that $b,y\in\fqt$ are such that $\abs{b}<\abs{r}$ and $\abs{y}<\frac{q^P}{\abs{r}}$. In particular, we then have $\abs{b}\le \frac{\abs{r}}{q}$ and hence
\begin{align*}
   \abs{\theta b^2}
   &<\frac{1}{\abs{r}q^{P}}\left(\frac{\abs{r}}{q}\right)^2
   =\frac{\abs{r}}{q^{P+2}}
   \le \frac{q^{P}}{q^{P+2}}=\frac{1}{q^2}
\intertext{as well as}
   \abs{2\theta bry}
   &<\abs{2}\cdot \frac{1}{\abs{r}q^P}\cdot\frac{\abs{r}}{q}\cdot\abs{r}\cdot\frac{q^P}{\abs{r}}
   =\frac{1}{q}
\end{align*}
since we assumed $\abs{\theta}<\frac{1}{\abs{r}q^P}$ and $\abs{r}\le q^P$. Consequently, we get
\[
   \abs{\theta(b+ry)^2-\theta r^2y^2}
   =\abs{\theta b^2+2\theta bry}
   \le \max\{\abs{\theta b^2},\abs{2\theta bry}\}
   <\frac{1}{q},
\]
which implies
\[
    \ord\!\left(\theta(b+ry)^2-\theta r^2y^2\right)<-1.
\]
From Lemma~\ref{lem-psi}, it therefore follows that
\[
   \psi\!\left(\theta(b+ry)^2\right)=\psi\!\left(\theta r^2y^2\right),
\]
and thus
\begin{align*}
   T\!\left(\frac{a}{r}+\theta\right)
   &=\sum_{\substack{b\in\fqt \\ \abs{b}<\abs{r}}} \psi\!\left(\frac{ab^2}{r}\right) \sum_{\substack{y\in\fqt \\ \abs{y}<\frac{q^P}{\abs{r}}}} \psi\!\left(\theta r^2y^2\right)\\
   &=S_{a,r} \sum_{\substack{y\in\fqt \\ \abs{y}<\frac{q^P}{\abs{r}}}} \psi\!\left(\theta r^2y^2\right).
\end{align*}
For $a=0$ this yields
\begin{align*}
    T(\theta)
   &=\sum_{\substack{b\in\fqt \\ \abs{b}<\abs{r}}} \psi(0)
    \sum_{\substack{y\in\fqt \\ \abs{y}<\frac{q^P}{\abs{r}}}} \psi\!\left(\theta r^2y^2\right)
    =\abs{r}\sum_{\substack{y\in\fqt \\ \abs{y}<\frac{q^P}{\abs{r}}}} \psi\!\left(\theta r^2y^2\right),
\end{align*}
and we then get
\begin{align*}
    T\!\left(\frac{a}{r}+\theta\right)
    &=S_{a,r} \sum_{\substack{y\in\fqt \\ \abs{y}<\frac{q^P}{\abs{r}}}} \psi\!\left(\theta r^2y^2\right)
    =S_{a,r}\,\frac{T(\theta)}{\abs{r}}
\end{align*}
and hence~\eqref{gl-T(a/r+theta)=S_arT(theta)}. From~\eqref{gl-S(alpha)=produkt} and~\eqref{gl-T(a/r+theta)=S_arT(theta)}, we now obtain
\begin{align*}
   S\!\left(\frac{a}{r}+\theta\right)
   &=\prod_{i=1}^n T\!\left( \frac{a_i a}{r}+a_i\theta\right)
   = \prod_{i=1}^n \frac{S_{a_i a,r} \,T(a_i\theta)}{\abs{r}}.
\end{align*}
With that, it finally follows
\begin{align*}
   S\!\left(\frac{a}{r}+\theta\right)
   &=\frac{1}{\abs{r}^n}\left(\prod\limits_{i=1}^n S_{a_i a,r}\right)\left(\prod\limits_{i=1}^n T(a_i\theta)\right)
   = \frac{S_{a,r}(f)\,S(\theta)}{\abs{r}^n}
\end{align*}
from~\eqref{gl-S_a,r(f)=produkt} and~\eqref{gl-S(alpha)=produkt}.
\end{proof}

Applying Lemma~\ref{lem-S(a/r+theta)} to the integrand in~\eqref{gl-N(P)=SumSumInt} yields
\begin{align}
    N(P)
    &=\sum_{\substack{r\in\fqt\\
   \abs{r}\le q^P\\
   r\textup{ monic}}}
   \frac{S_{r}(f)}{\abs{r}^n}
   \int\limits_{\left\{\theta\in\T:\abs{\theta}<\frac{1}{\abs{r}q^{P}}\right\}}  S(\theta)\,\mathrm{d}\theta,\label{gl-N(P)=SumS_r(f)Int}
\end{align}
where we denote
\begin{align}
S_r(f)&\coloneq\sum_{\substack{a\in\fqt \\ \abs{a}<\abs{r}\\ \gcd(a,r)=1}} S_{a,r}(f)\label{gl-S_r(f)Definition}
\end{align}
for monic $r\in\fqt$. In the following lemma we show that the integral
\[
    I_r\coloneq\int\limits_{\left\{\theta\in\T:\abs{\theta}<\frac{1}{\abs{r}q^{P}}\right\}}  S(\theta)\,\mathrm{d}\theta,
\]
that appears in~\eqref{gl-N(P)=SumS_r(f)Int} for monic $r\in\fqt$ with $\abs{r}\le q^P$, can be rewritten in terms of sums of the form in~\eqref{gl-S_r(f)Definition}.

\begin{lem}\label{lem-Integral=quasiSumGaußsummen}
Let $r\in\fqt$ be monic such that $\rho\coloneq\deg(r)\le P$. We have
\begin{align*}
    I_r =\begin{dcases}
        \frac{\abs{r}^n q^{n+1}}{q^{2P}}\sum_{k=0}^{P-\rho-1}q^{nk}\,S_{t^{P-\rho-k-1}}(f)&\textup{if }\rho\le P-1,\\
        q^{P(n-2)}&\textup{if }\rho=P.
    \end{dcases}
\end{align*}
\end{lem}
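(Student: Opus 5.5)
The plan is to turn $I_r$ into a point count using the orthogonality relation~\eqref{gl-orthogonalitätsrelation.allgemein}, and then to evaluate that count by reversing polynomials, so that it becomes a congruence count modulo a power of $t$. Since $f(\mathbf{x})\in\fqt$ for each $\mathbf{x}$, expanding $S(\theta)$ and integrating term by term with $M=\rho+P$ gives
\[
 I_r=\int\limits_{B(0;-\rho-P)}S(\theta)\,\mathrm{d}\theta=\frac{1}{q^{\rho+P}}\,\#\left\{\mathbf{x}\in\fqt^n:\abs{\mathbf{x}}<q^P,\ \deg f(\mathbf{x})<\rho+P\right\}.
\]

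\emph{Case $\rho=P$.} Every $\mathbf{x}$ with $\abs{\mathbf{x}}<q^P$ satisfies $\deg f(\mathbf{x})\le 2P-2<2P$, so the count is $q^{Pn}$. Hence $I_r=q^{Pn-2P}=q^{P(n-2)}$. One can also see this directly: on this range $\abs{\theta f(\mathbf{x})}<q^{-2}$, so by Lemma~\ref{lem-psi} we have $S(\theta)=q^{Pn}$ identically.

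\emph{Case $\rho\le P-1$.} Put $m\coloneq P-\rho-1\ge 0$.
\begin{itemize}
\item[(a)] Apply the bijection $\mathbf{x}\mapsto\mathbf{y}$, $y_i(t)=t^{P-1}x_i(1/t)$, on $\{\abs{\mathbf{x}}<q^P\}$. Because $f$ is a quadratic form with constant coefficients, $f(\mathbf{y})=t^{2P-2}f(\mathbf{x})(1/t)$. So the condition that the coefficients of $f(\mathbf{x})$ in degrees $\rho+P,\dots,2P-2$ vanish becomes $t^m\mid f(\mathbf{y})$.
\item[(b)] Since $m<P$, the vectors $\mathbf{y}$ with $\abs{\mathbf{y}}<q^P$ run $q^{n(P-m)}$ times over each residue class modulo $t^m$. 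The count therefore equals $q^{n(P-m)}\,\#\{\mathbf{b}:\abs{\mathbf{b}}<q^m,\ f(\mathbf{b})\equiv 0 \bmod t^m\}$.
\item[(c)] Detect the congruence with the finite orthogonality of $\psi$ modulo $t^m$:
\[
 \#\{\mathbf{b}\}=q^{-m}\sum_{\abs{c}<q^m}\sum_{\abs{\mathbf{b}}<q^m}\psi\!\left(\frac{cf(\mathbf{b})}{t^m}\right).
\]
This follows from~\eqref{gl-orthogonalitätsrelation.allgemein}, or directly from the fact that $c\mapsto\psi(cu/t^m)$ is a nontrivial character unless $t^m\mid u$.
\item[(d)] Group the $c$ according to the reduced fraction $c/t^m=a/t^j$ with $\gcd(a,t)=1$ and $0\le j\le m$. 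By Lemma~\ref{lem-psi}(i), $\psi(af(\mathbf{b})/t^j)$ depends only on $\mathbf{b}$ modulo $t^j$, so the inner sum equals $q^{n(m-j)}S_{a,t^j}(f)$. Summing over $a$ gives $\sum_{j=0}^m q^{n(m-j)}S_{t^j}(f)$, where $S_{t^0}(f)=S_{1}(f)=1$ accounts for $c=0$.
\end{itemize}

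Combining (a)--(d) and substituting $k=m-j$,
\[
 I_r=q^{-\rho-P}\,q^{n(P-m)}\,q^{-m}\sum_{k=0}^{m}q^{nk}S_{t^{m-k}}(f).
\]
With $P-m=\rho+1$ and $-\rho-P-m=1-2P$, the prefactor is $q^{n\rho}q^{n+1}/q^{2P}=\abs{r}^nq^{n+1}/q^{2P}$, which is the claimed formula.

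The main obstacle I expect is step (a): checking carefully that the reversal is a bijection on $\{\abs{\mathbf{x}}<q^P\}$ and that the degree window translates exactly into divisibility by $t^m$, including the edge cases $m=0$ and $\deg f(\mathbf{x})$ small. The grouping in (d) by reduced fractions is routine but needs care with the $c=0$ term.
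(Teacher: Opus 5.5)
Your proof is correct. For $\rho=P$ it is the paper's argument. For $\rho\le P-1$ it takes a somewhat different route to the same formula, although the key ingredients are shared: polynomial reversal, reduction modulo a power of $t$, and grouping by $t$-adic valuation.

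The paper stays at the level of exponential sums throughout:
\begin{itemize}
\item It splits $\theta=\theta_1+\theta_2$ with $\theta_2\in B(0;-2P+1)$ and shows $S(\theta)=S(\theta_1)$. This writes $I_r$ as $q^{1-2P}$ times a finite sum of $S$ over $\theta_1\in s^{\rho+P+1}R_\rho$.
\item It reverses $\mathbf{x}$ in the variable $s=t^{-1}$ and reduces $\mathbf{x}$ modulo $s^{P-\rho-1}$.
\item It then introduces the auxiliary character $\psi_s(\alpha)=\psi(s^2\alpha)$ and renames $s$ as $t$. In effect the frequency variable $\theta$ is reversed together with $\mathbf{x}$, and only then is $\theta$ grouped by its $t$-adic valuation.
\end{itemize}

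You instead apply the orthogonality relation once, with $M=\rho+P$, to turn $I_r$ into $q^{-\rho-P}$ times a genuine point count. You then reverse only $\mathbf{x}$, which gives a congruence count modulo $t^{m}$. Only after that do you re-expand, using finite orthogonality modulo $t^{m}$ and grouping the frequencies $c/t^{m}$ by reduced denominator.

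What each route buys:
\begin{itemize}
\item Your Fourier expansion happens on the $t$-side after the reversal. This avoids $\psi_s$ and the somewhat informal replacement of $s$ by $t$.
\item Your route also shows why the modulus $t^{P-\rho-1}$ appears: $I_r$ measures those $\mathbf{x}$ for which the top $P-\rho-1$ coefficients of $f(\mathbf{x})$ vanish.
\item The paper's route avoids the detour through counting and the second orthogonality relation, at the cost of the $s$-side bookkeeping.
\end{itemize}

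The edge cases you flagged are harmless. The identity $f(\mathbf{y})=t^{2P-2}f(\mathbf{x})(1/t)$ is just reversal of a coefficient vector of length $2P-1$, so it holds even when $f(\mathbf{x})$ has small degree or vanishes. For $m=0$ the divisibility condition is empty and both sides equal $q^{(n-2)P+1}$. Your separate treatment of $c=0$ matches the paper's convention $S_1(f)=1$.
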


\begin{proof}
First, we assume $\rho=P$. Let $\mathbf{x}\in\fqt^n$ with $\abs{\mathbf{x}}<q^P$. According to~\eqref{gl-orthogonalitätsrelation.allgemein}, we have
\begin{align}
    \int\limits_{\left\{\theta\in\T : \abs{\theta}<\frac{1}{q^{\rho+P}}\right\}} \psi(\theta f(\mathbf{x}))\,\mathrm{d}\theta
    =\begin{dcases}
        \frac{1}{q^{2P}}&\textup{if }\abs{f(\mathbf{x})}<q^{2P},\\
        0&\textup{else.}
    \end{dcases}\label{gl-IntegralFalls:rho=P}
\end{align}
Since $f$ is a quadratic form with coefficients in $\fq^\times$, it holds
\begin{align}
    \abs{f(\mathbf{x})}
    &\le \abs{\mathbf{x}}^2
    \le \left(q^{P-1}\right)^2
    <q^{2P},\label{gl-Betragf(x)<=q2P-2}
\end{align}
and hence, the integral in~\eqref{gl-IntegralFalls:rho=P} has the value $\frac{1}{q^{2P}}$. Summing over all $\mathbf{x}\in\fqt^n$ with $\abs{\mathbf{x}}<q^P$ therefore yields
\begin{align*}
    I_r
    &=\sum_{\substack{\mathbf{x}\in\fqt^n\\ \abs{\mathbf{x}}<q^P }} \int\limits_{\left\{\theta\in\T : \abs{\theta}<\frac{1}{q^{\rho+P}}\right\}} \psi(\theta f(\mathbf{x}))\,\mathrm{d}\theta =\sum_{\substack{\mathbf{x}\in\fqt^n\\ \abs{\mathbf{x}}<q^P }} \frac{1}{q^{2P}}
    =q^{nP}\cdot \frac{1}{q^{2P}}=q^{P(n-2)}.
\end{align*}

Now we assume $\rho\le P-1$. We set $s\coloneq t^{-1}$. Let $\theta\in\T$ with $\abs{\theta}<\frac{1}{q^{\rho+P}}$. Then, for all $i\in\Z$ with $i<-(\rho+P)$, there is a $b_i\in\fq$ such that
\[
    \theta=\sum_{i=-\infty}^{-\rho-P-1} b_i t^i=\sum_{i=\rho+P+1}^\infty b_{-i}s^i.
\]
We can write $\theta$ as the sum of
\[
    \theta_1\coloneq \sum_{i=\rho+P+1}^{2P-1} b_{-i} s^i
    =s^{\rho+P+1}\sum_{i=0}^{P-\rho-2} b_{-\rho-P-1-i} s^i 
\]
and
\[
    \theta_2\coloneq \sum_{i=-\infty}^{-2P} b_{i} t^i
    \in B(0;-2P+1).
\]
Since for all $\mathbf{x}\in\fqt^n$ with $\abs{\mathbf{x}}<q^P$, we have $f(\mathbf{x})\le q^{2P-2}$ according to~\eqref{gl-Betragf(x)<=q2P-2}, it then holds
\[
    \abs{\theta_2f(\mathbf{x})}\le q^{-2P}q^{2P-2}=\frac{1}{q^2}.
\]
From Lemma~\ref{lem-psi}, it thus follows
\[
    \psi(\theta f(\mathbf{x}))=\psi(\theta_1 f(\mathbf{x}))\psi(\theta_2 f(\mathbf{x}))=\psi(\theta_1 f(\mathbf{x}))
\]
for all $\mathbf{x}\in\fqt^n$ with $\abs{\mathbf{x}}<q^P$, i.e. $\psi$ only depends on $\theta_1$, but not on $\theta_2$. Consequently, we have $S(\theta)=S(\theta_1)$. By viewing $\theta_1$ as an element of the quotient $s^{\rho+P+1}R_\rho$, where $R_\rho\coloneq \mathbb{F}_q[s]/\!\left(s^{P-\rho-1}\right)$, we can decompose the set we want to integrate over into
\[
    \left\{\theta\in\T : \abs{\theta}<\frac{1}{q^{\rho+P}}\right\}
    =s^{\rho+P+1}R_\rho + B(0;-2P+1),
\]
where the integrand $S(\theta)$ only depends on the corresponding element in $s^{\rho+P+1}R_\rho$ and is constant for such a fixed element of $s^{\rho+P+1}R_\rho$. Since the set $B(0;-2P+1)$ has Haar measure $q^{-2P+1}$ according to~\eqref{gl-integralÜberBall}, we get
\[
    I_r
    =q^{-2P+1}\sum_{\theta\in s^{\rho+P+1}R_\rho} S(\theta)
    =q^{-2P+1}\sum_{\theta\in R_\rho} S\!\left(s^{\rho+P+1}\theta\right).
\]

We introduce the notation $\abs{\cdot}_s$ for the absolute value when we measure the degree or the order of an element with respect to the variable $s$ instead of $t$. Let $x\in\fqt$ with $\abs{x}<q^P$. Then, there are coefficients $x_0,\dots,x_{P-1}\in\fq$ such that
\[
    x=\sum_{i=0}^{P-1}x_it^i=\sum_{i=-(P-1)}^0 x_{-i}s^i=s^{1-P}\sum_{i=0}^{P-1}x_{P-1-i} s^i.
\]
Hence, every $\mathbf{x}\in\fqt^n$ with $\abs{\mathbf{x}}<q^P$ can be written as $\mathbf{x}=s^{1-P}\mathbf{x^\prime}$ for some $\mathbf{x^\prime}\in\mathbb{F}_q[s]^n$ with $\abs{\mathbf{x^\prime}}_s<q^P$, and we then have
\[
    f(\mathbf{x})=f\!\left(s^{1-P}\mathbf{x^\prime}\right)=s^{2-2P}f(\mathbf{x^\prime}).
\]
From that, we obtain
\begin{align*}
    I_r&=q^{-2P+1}\sum_{\theta\in R_\rho} \sum_{\substack{\mathbf{x}\in\fqt^n\\ \abs{\mathbf{x}}<q^P}}\psi\!\left(s^{\rho+P+1}\theta f(\mathbf{x})\right)\\
    &=q^{-2P+1}\sum_{\theta\in R_\rho} \sum_{\substack{\mathbf{x}\in\mathbb{F}_q[s]^n\\ \abs{\mathbf{x}}_s <q^P}}\psi\!\left(\frac{\theta f(\mathbf{x})}{s^{P-\rho-3}}\right).
\end{align*}
For every $x\in\fqt$ with $\abs{x}_s<q^P$, we can write $x=y+s^{P-\rho-1} z$ for unique $y,z\in\fqt$ with $\abs{y}_s< q^{P-\rho-1}$ and $\abs{z}_s<\frac{q^P}{q^{P-\rho-1}}=q^{\rho+1}$. Then, we have
\begin{align*}
    \frac{\theta x^2}{s^{P-\rho-3}}-\frac{\theta y^2}{s^{P-\rho-3}}
    &=\frac{2\theta ys^{P-\rho-1}z +\theta s^{2(P-\rho-1)}z^2}{s^{P-\rho-3}} =2\theta yzs^{2}+\theta z^2s^{P-\rho+1}
\end{align*}
for all $\theta\in R_\rho$. As we assumed $\rho\le P-1$, the monomial $s^i$ only appears for $i>1$. Since the character $\psi$ only depends on the coefficient of $t^{-1}=s$, it then follows 
\[
    \psi\!\left(\frac{\theta x^2}{s^{P-\rho-3}}\right)=\psi\!\left(\frac{\theta y^2}{s^{P-\rho-3}}\right)
\]
from Lemma~\ref{lem-psi}, and hence
\[
    \psi\!\left(\frac{\theta f(\mathbf{x})}{s^{P-\rho-3}}\right)=\psi\!\left(\frac{\theta f(\mathbf{y})}{s^{P-\rho-3}}\right)
\]
for all $\mathbf{x}\in\fqt^n$ where $\mathbf{x}=\mathbf{y}+s^{P-\rho-1}\mathbf{z}$ for unique $\mathbf{y},\mathbf{z}\in\fqt^n$ with $\abs{\mathbf{y}}_s<q^{P-\rho-1}$ and $\abs{\mathbf{z}}_s<q^{\rho+1}$. This yields
\begin{align*}
    I_r &=q^{-2P+1}\sum_{\theta\in R_\rho} \sum_{\substack{\mathbf{y}\in\mathbb{F}_q[s]^n\\ \abs{\mathbf{y}}_s <q^{P-\rho-1}}}\sum_{\substack{\mathbf{z}\in\mathbb{F}_q[s]^n\\ \abs{\mathbf{z}}_s <q^{\rho+1}}} \psi\!\left(\frac{\theta f(\mathbf{y})}{s^{P-\rho-3}}\right)\\
     &=\frac{q^{n(\rho+1)+1}}{q^{2P}}\sum_{\theta\in R_\rho} \sum_{\substack{\mathbf{x}\in\mathbb{F}_q[s]^n\\ \abs{\mathbf{x}}_s <q^{P-\rho-1}}}\psi\!\left(\frac{\theta f(\mathbf{x})}{s^{P-\rho-3}}\right)
\end{align*}
because the number of $\mathbf{z}\in\fqt^n$ with $\abs{\mathbf{z}}_s<q^{\rho+1}$ amounts to $q^{n(\rho+1)}$. We now define $\psi_s$ via
\[
    \psi_s(\alpha)\coloneq \psi(s^2\alpha)
\]
for all $\alpha\in\fql$. Since $\psi$ only depends on the coefficient of $t^{-1}=s$, the character $\psi_s$ only depends on the coefficient of $s^{-1}$. We can then write
\begin{align}
    I_r &=\frac{q^{n(\rho+1)+1}}{q^{2P}}\sum_{\theta\in R_\rho} \sum_{\substack{\mathbf{x}\in\mathbb{F}_q[s]^n\\ \abs{\mathbf{x}}_s <q^{P-\rho-1}}}\psi_s\!\left(\frac{\theta f(\mathbf{x})}{s^{P-\rho-1}}\right)\notag\\
    &=\frac{\abs{r}^n q^{n+1}}{q^{2P}}\sum_{\substack{\theta\in \fqt\\ \abs{\theta}<q^{P-\rho-1}}} \sum_{\substack{\mathbf{x}\in\mathbb{F}_q[t]^n\\ \abs{\mathbf{x}} <q^{P-\rho-1}}}\psi\!\left(\frac{\theta f(\mathbf{x})}{t^{P-\rho-1}}\right),\label{gl-I_rLetzterSchritt}
\end{align}
where we just replace $s$ by $t$, and therefore $\psi_s$ by $\psi$. Let now $\theta\in\fqt$ such that $\abs{\theta}<q^{P-\rho-1}$. Then, we can find a maximal integer $k\in\{0,\dots,P-\rho-1\}$ with $t^k\mid \theta$ and hence, we may write $\theta=t^k \,\theta^\prime$ for some $\theta^\prime\in\fqt$ such that $t$ and $\theta^\prime$ are coprime. Here, we notice that we have $k=P-\rho-1$ if and only if $\theta=0$. Since for all $\mathbf{x}\in\fqt^n$ with $\abs{\mathbf{x}}<q^P$, there exist unique $\mathbf{y},\mathbf{z}\in\fqt$ with $\abs{\mathbf{y}}<q^{P-\rho-k-1}$ and $\abs{\mathbf{z}}<q^k$ such that $\mathbf{x}=\mathbf{y}+t^{P-\rho-k-1}\mathbf{z}$, we then have
\[
    f(\mathbf{x})=f\!\left(\mathbf{y}+t^{P-\rho-k-1}\mathbf{z}\right)=f(\mathbf{y})+t^{P-\rho-k-1}v
\]
for some $v\in\fqt$, and thus
\[
    \frac{\theta f(\mathbf{x})}{t^{P-\rho-1}}=\frac{t^k\theta^\prime \left(f(\mathbf{y})+t^{P-\rho-k-1}v\right)}{t^{P-\rho-1}}
    =\frac{\theta^\prime f(\mathbf{y})}{t^{P-\rho-k-1}}+\theta^\prime v.
\]
From Lemma~\ref{lem-psi}, it therefore follows
\begin{align*}
    \sum_{\substack{\mathbf{x}\in\mathbb{F}_q[t]^n\\ \abs{\mathbf{x}} <q^{P-\rho-1}}}\psi\!\left(\frac{\theta f(\mathbf{x})}{t^{P-\rho-1}}\right)
    &= \sum_{\substack{\mathbf{y}\in\mathbb{F}_q[t]^n\\ \abs{\mathbf{y}} <q^{P-\rho-k-1}}}\sum_{\substack{\mathbf{z}\in\mathbb{F}_q[t]^n\\ \abs{\mathbf{z}} <q^k}} \psi\!\left(\frac{\theta^\prime f(\mathbf{y})}{t^{P-\rho-k-1}}\right)\\
    &= \sum_{\substack{\mathbf{y}\in\mathbb{F}_q[t]^n\\ \abs{\mathbf{y}} <q^{P-\rho-k-1}}} \left(q^k\right)^n \,\psi\!\left(\frac{\theta^\prime f(\mathbf{y})}{t^{P-\rho-k-1}}\right)\\
    &= q^{nk} \,S_{\theta^\prime, t^{P-\rho-k-1}}(f)
\end{align*}
since $\theta^\prime$ and $t^{P-\rho-k-1}$ are coprime. By inserting into~\eqref{gl-I_rLetzterSchritt}, we then obtain
\begin{align*}
    I_r &=\frac{\abs{r}^n q^{n+1}}{q^{2P}}\sum_{k=0}^{P-\rho-1}\sum_{\substack{\theta\in \fqt\\ \abs{\theta}<q^{P-\rho-k-1}\\ \gcd(\theta,t)=1}} q^{nk}\, S_{\theta, t^{P-\rho-k-1}}(f).
\end{align*}
\end{proof}

\section{Quadratic Gauß Sums}\label{kap-quadratischeGaußsummen}
Let $r\in\fqt$ be monic. We define the \emph{quadratic Gauß sum} $\tau_r$ by
\[
  \tau_r \coloneq \sum_{\substack{x\in\fqt\\ \abs{x}<\abs{r}}} \psi\!\left(\frac{x^2}{r}\right).
\]
Since the character $\psi$ induces a well-defined character $\psi_r$ on the ring $\fqt/(r)$ by
\[
  \psi_r\colon\fqt/(r)\to\C^\times, \quad [x]\mapsto\psi\!\left(\frac{x}{r}\right),
\]
we can write
\[
  \tau_r=\sum_{x\in\fqt/(r)}\psi_r\!\left(x^2\right).
\]
For a monic irreducible element $\varpi\in\fqt$ we also have
\begin{align}
  \tau_\varpi
  &=\sum_{x\in\fqt/(\varpi)} \legendre{x}{\varpi}\psi_\varpi(x)\label{gl-tau_varpiMitLegendresymbol}
\end{align}
according to~{\cite[Equation~(1.1.4)]{berndt1998}} because then $\fqt/(\varpi)$ is a finite field. Here, $\legendre{\cdot}{\varpi}$ denotes the function field analogue of the Legendre symbol. As $\fqt/(\varpi)$ contains
\[
    \abs{\varpi}= q^{\deg(\varpi)}=p^{\deg(\varpi)\nu}
\]
elements and is therefore isomorphic to $\mathbb{F}_{p^{\deg(\varpi)\nu}}$, the expression of the Gauß sum $\tau_\varpi$ in~\eqref{gl-tau_varpiMitLegendresymbol} allows us to apply~{\cite[Theorem~11.5.4]{berndt1998}}, which yields
\[
  \tau_\varpi= -i_p^{\,\deg(\varpi)\nu}\abs{\varpi}^{\frac{1}{2}},
\]
where $i_p$ is given by
\begin{align*}
     i_p=\begin{dcases}
     -1&\textup{if }p\equiv 1\mod 4,\\
     -i&\textup{if }p\equiv 3\mod 4.
     \end{dcases}
  \end{align*}
The following result~{\cite[Lemma~2.3]{vishe2023}} then shows how to evaluate $\tau_{\varpi^k}$ for $k\ge 2$.

\begin{lem}\label{lem-tau_varpikPotenz}
Let $\varpi\in\fqt$ be monic and irreducible, and let $k\in\N$. Then, we have
\begin{align*}
    \tau_{\varpi^k}
    =\begin{dcases}
        \abs{\varpi}^{\frac{k}{2}}&\textup{if }2\mid k,\\
        -i_p^{\,\deg(\varpi)\nu}\abs{\varpi}^{\frac{k}{2}}&\textup{if }2\nmid k.
    \end{dcases}
\end{align*}
\end{lem}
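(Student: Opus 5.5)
We prove Lemma~\ref{lem-tau_varpikPotenz} by induction on $k$, stepping from $k$ to $k+2$, in the same way as the classical evaluation of Gauß sums modulo prime powers. The base cases are $k=1$, which is the formula $\tau_\varpi=-i_p^{\,\deg(\varpi)\nu}\abs{\varpi}^{\frac12}$ quoted from~\cite{berndt1998}, and $k=2$. So the task is to show
\[
   \tau_{\varpi^{k+2}}=\abs{\varpi}\,\tau_{\varpi^{k}}\quad (k\ge 1),\qquad \tau_{\varpi^2}=\abs{\varpi}.
\]

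For the reduction step, let $m\ge 2$ and write each residue modulo $\varpi^m$ uniquely as $x=y+\varpi^{m-1}z$, where $\abs{y}<\abs{\varpi}^{m-1}$ and $\abs{z}<\abs{\varpi}$. Since $2(m-1)\ge m$, we have
\[
   \frac{x^2}{\varpi^m}\equiv\frac{y^2}{\varpi^m}+\frac{2yz}{\varpi}\pmod{\fqt}.
\]
Lemma~\ref{lem-psi}(i) lets us discard the part lying in $\fqt$, and additivity gives
\[
   \tau_{\varpi^m}=\sum_{y}\psi_{\varpi^m}(y^2)\sum_{z\in\fqt/(\varpi)}\psi_\varpi(2yz).
\]
The inner sum equals $\abs{\varpi}$ if $\varpi\mid y$ and $0$ otherwise. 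This is orthogonality for the nontrivial additive character $\psi_\varpi$ of the field $\fqt/(\varpi)$, using that $2$ is invertible because $p>2$. To justify that $\psi_\varpi$ is nontrivial, note that $\psi(t^{\deg\varpi-1}/\varpi)=e_q(1)$. The trace of $1$ is $\nu$, which may be $0$ in $\mathbb{F}_p$, so we should instead exhibit some $c\in\fq$ with $e_q(c)\ne1$ and use $c\,t^{\deg\varpi-1}$.

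Writing $y=\varpi w$ with $w$ running modulo $\varpi^{m-2}$, we get $\psi_{\varpi^m}(\varpi^2w^2)=\psi_{\varpi^{m-2}}(w^2)$. The count is consistent: $y$ ranges over $\abs{\varpi}^{m-1}$ residues, and those divisible by $\varpi$ correspond to $w$ modulo $\varpi^{m-2}$, each appearing once. Hence
\[
   \tau_{\varpi^m}=\abs{\varpi}\,\tau_{\varpi^{m-2}},
\]
with the convention $\tau_1=1$ (a single term). For $m=2$ this gives $\tau_{\varpi^2}=\abs{\varpi}$ directly.

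Induction now finishes the proof. For even $k$, repeated application yields $\tau_{\varpi^k}=\abs{\varpi}^{k/2}$. For odd $k$, it yields $\abs{\varpi}^{(k-1)/2}\tau_\varpi=-i_p^{\,\deg(\varpi)\nu}\abs{\varpi}^{k/2}$. The only delicate points are checking that the cross term $\varpi^{2(m-1)}z^2/\varpi^m$ is a polynomial, which needs $m\ge2$, and confirming that $\psi_\varpi$ is nontrivial. Neither is a serious obstacle.
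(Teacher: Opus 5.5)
Your reduction step is correct. The splitting $x=y+\varpi^{m-1}z$, the orthogonality over $z$ (including your repair of the non-triviality argument via some $c\in\fq$ with $e_q(c)\neq 1$) and the substitution $y=\varpi w$ together give $\tau_{\varpi^m}=\abs{\varpi}\,\tau_{\varpi^{m-2}}$ for $m\ge 2$, with $\tau_1=1$. The paper does not prove the lemma itself; it cites \cite[Lemma~2.3]{vishe2023}. Its later remarks in the proof of Lemma~\ref{lem-LegendresymbolRausziehen} indicate that the cited argument is this same reduction, to $\abs{\varpi}^{k/2}$ resp.\ $\abs{\varpi}^{(k-1)/2}\tau_\varpi$. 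So your argument fully proves the even case.

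The gap is the base case $k=1$ that you import, and with it the whole odd case, which is false in general. Write $d=\deg(\varpi)$. The value $-i_p^{\,d\nu}\abs{\varpi}^{1/2}$ from \cite[Theorem~11.5.4]{berndt1998} is the Gauß sum of $\mathbb{F}_{q^d}$ with respect to the canonical character $x\mapsto\exp(2\pi i\tr_{\mathbb{F}_{q^d}/\mathbb{F}_p}(x)/p)$. However, $\psi_\varpi$ does not correspond to that character under $\fqt/(\varpi)\cong\mathbb{F}_{q^d}$. By partial fractions, the coefficient of $t^{-1}$ in $x/\varpi$ is $\tr_{\mathbb{F}_{q^d}/\fq}(x(\alpha)/\varpi'(\alpha))$ for a root $\alpha$ of $\varpi$. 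So $\psi_\varpi$ is the canonical character twisted by $1/\varpi'(\alpha)$, and $\tau_\varpi$ acquires the factor $\eta(\varpi'(\alpha))\in\{\pm1\}$, where $\eta$ is the quadratic character of $\mathbb{F}_{q^d}$.

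This factor really can be $-1$. Take $q=5$ (so $i_p=-1$) and $\varpi=t^2-2$. For $x=a+bt$ the coefficient of $t^{-1}$ in $x^2/\varpi$ is $2ab$, so $\tau_\varpi=\sum_{a,b\in\mathbb{F}_5}e_q(2ab)=5$. The lemma instead claims $-i_p^{\,2}\cdot 5=-5$. Hence for odd $k$ your induction proves only $\tau_{\varpi^k}=\abs{\varpi}^{(k-1)/2}\tau_\varpi$, and no argument can give the stated sign.

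The correct value can be found as follows. View $x\mapsto$ (the $t^{-1}$-coefficient of $x^2/r$) as a quadratic form on $\{x:\abs{x}<\abs{r}\}\cong\fq^{\deg r}$. Its Gram matrix in the basis $1,t,\dots,t^{\deg r-1}$ vanishes above the anti-diagonal and has $1$'s on it, so its determinant is $(-1)^{\deg r(\deg r-1)/2}$. This gives $\tau_r=\abs{r}^{1/2}$ if $\deg r$ is even and $\tau_r=-i_p^{\,\nu}\abs{r}^{1/2}$ if $\deg r$ is odd.

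The error does not propagate into the rest of the paper. There, $\tau_{\varpi^k}$ enters only through $\tau_{\varpi^k}^n$ with $n$ even, where both versions agree. For $n$ and $k$ both odd the sum vanishes regardless of the value of $\tau_{\varpi^k}$.
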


For $k\in\N$ and $a,\varpi\in\fqt$ such that $\varpi$ is monic, irreducible and coprime to $a$, we can express sums of the form $S_{a,\varpi^k}$ in terms of $\tau_{\varpi^k}$ as we will show in the next lemma. To this end, we make use of the function field analogue of the Jacobi symbol, which generalises the function field analogue of the Legendre symbol, analogously to the definition over the integers.

\begin{lem}\label{lem-LegendresymbolRausziehen}
Let $\varpi\in\fqt$ be a monic irreducible element, and let $a\in\fqt$ be coprime to $\varpi$. For all $k\in\N$ we have
\[
  S_{a,\varpi^k} =\legendre{a}{\varpi^k}\tau_{\varpi^k}.
\]
\end{lem}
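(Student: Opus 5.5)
We will treat the two parities of $k$ separately, using throughout that $x\mapsto ax$ is a bijection of $\fqt/(\varpi^k)$ because $\gcd(a,\varpi)=1$. Write $\legendre{a}{\varpi^k}=\legendre{a}{\varpi}^k$ for the Jacobi symbol. By Lemma~\ref{lem-psi}(i), $S_{a,\varpi^k}=\sum_{x\in\fqt/(\varpi^k)}\psi_{\varpi^k}(ax^2)$ depends only on the class of $a$ modulo $\varpi^k$.

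\textbf{Case $2\mid k$.} We must show $S_{a,\varpi^k}=\tau_{\varpi^k}$. We reduce to the case of a square unit by a Hensel-type argument. The cleanest route is to count solutions. Write
\[
S_{a,\varpi^k}=\sum_{y\in\fqt/(\varpi^k)}N_k(y)\,\psi_{\varpi^k}(ay),\qquad N_k(y)=\#\{x \bmod \varpi^k: x^2\equiv y\}.
\]
We then need $\sum_y N_k(y)\psi_{\varpi^k}(ay)$ to be independent of the unit $a$ when $k$ is even.

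\textbf{Case $2\nmid k$.} Here the claim is that multiplying $a$ by a non-square unit changes the sign. A uniform approach covering both cases is the standard reduction $k\to k-2$. Split $x=u+\varpi^{k-1}v$ with $u$ modulo $\varpi^{k-1}$ and $v$ modulo $\varpi$. Then
\[
\psi\!\left(\frac{a x^2}{\varpi^k}\right)=\psi\!\left(\frac{a u^2}{\varpi^k}\right)\psi\!\left(\frac{2auv}{\varpi}\right),
\]
since $\varpi^{2k-2}/\varpi^k$ is a polynomial for $k\ge2$. Summing over $v$ with the orthogonality of $\psi_\varpi$ (using $2a$ coprime to $\varpi$, as $p>2$) forces $\varpi\mid u$. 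Writing $u=\varpi w$ with $w$ modulo $\varpi^{k-2}$ gives
\[
S_{a,\varpi^k}=\abs{\varpi}\,S_{a,\varpi^{k-2}},
\]
where $S_{a,1}=1$.

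Iterating this recursion reduces everything to $k=0$ or $k=1$. For $k$ even we get $S_{a,\varpi^k}=\abs{\varpi}^{k/2}$, independently of $a$, matching both $\tau_{\varpi^k}$ and $\legendre{a}{\varpi}^k=1$. For $k$ odd we get $S_{a,\varpi^k}=\abs{\varpi}^{(k-1)/2}S_{a,\varpi}$. Applying the same identity with $a=1$ gives $\tau_{\varpi^k}=\abs{\varpi}^{(k-1)/2}\tau_\varpi$, so it remains to show $S_{a,\varpi}=\legendre{a}{\varpi}\tau_\varpi$. Over the finite field $\fqt/(\varpi)$, $\#\{x:x^2=y\}=1+\legendre{y}{\varpi}$, so
\[
S_{a,\varpi}=\sum_y\psi_\varpi(ay)+\sum_y\legendre{y}{\varpi}\psi_\varpi(ay).
\]
The first sum is $0$ by nontriviality of $\psi_\varpi$. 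In the second, the substitution $y\mapsto a^{-1}y$ gives $\legendre{a}{\varpi}\sum_y\legendre{y}{\varpi}\psi_\varpi(y)$, which equals $\legendre{a}{\varpi}\tau_\varpi$ by~\eqref{gl-tau_varpiMitLegendresymbol}.

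The main obstacle is establishing the recursion cleanly. Two points need care. First, $\psi_\varpi$ must be a nontrivial character on $\fqt/(\varpi)$, i.e.\ some $v$ with $\abs{v}<\abs{\varpi}$ gives $\psi(v/\varpi)\neq1$; taking $v=ct^{\deg\varpi-1}$ with $e_q(c)\neq1$ works. Second, the decomposition $x=u+\varpi^{k-1}v$ and the change of variables $u=\varpi w$ must be compatible with the residue systems $\abs{x}<\abs{\varpi^k}$; this holds because all the sums depend only on residue classes, by Lemma~\ref{lem-psi}(i).
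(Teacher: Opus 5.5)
Your proposal is correct and follows the paper's route. The case $k=1$ goes through the Legendre-symbol form \eqref{gl-tau_varpiMitLegendresymbol} of $\tau_\varpi$ and the substitution $y\mapsto a^{-1}y$, which the paper takes from \cite{berndt1998}. The case $k\ge 2$ goes through the reduction $S_{a,\varpi^k}=\abs{\varpi}\,S_{a,\varpi^{k-2}}$, applied both to $a$ and to $a=1$, which the paper imports from the proof of Lemma~2.3 in \cite{vishe2023}; you simply carry out these cited steps yourself. The unfinished counting sketch with $N_k(y)$ under ``Case $2\mid k$'' is never used and can be deleted.
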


\begin{proof}
First, we assume $k=1$. Then, $\fqt/(\varpi)$ is isomorphic to the finite field $\mathbb{F}_{p^{\nu \deg(\varpi)}}$, and the quadratic Gauß sum~$\tau_\varpi$ can be expressed as in~\eqref{gl-tau_varpiMitLegendresymbol}. Hence, the identity we want to show directly follows from~{\cite[Theorem~1.1.3]{berndt1998}}.

If $k\ge 2$, we can proceed similarly to the proof of Lemma~\ref{lem-tau_varpikPotenz} (see~{\cite[Lemma~2.3]{vishe2023}}) and deduce
\begin{align*}
   S_{a,\varpi^k}=\begin{dcases}
   \abs{\varpi}^{\frac{k}{2}}&\textup{if }2\mid k,\\
   \abs{\varpi}^{\frac{k-1}{2}}S_{a,\varpi}&\textup{if }2\nmid k.
   \end{dcases}
\end{align*}
We then immediately get
\begin{align*}
    S_{a,\varpi^k}
    &=\abs{\varpi}^{\frac{k}{2}}
    =\tau_{\varpi^k}
    =\legendre{a}{\varpi^k}\tau_{\varpi^k}
\end{align*}
if $k$ is even. For the case where $k\ge 3$ is odd, we additionally use $S_{a,\varpi}=\legendre{a}{\varpi}\tau_\varpi$ from the case $k=1$, which yields
\begin{align*}
   S_{a,\varpi^k}
   &=\abs{\varpi}^{\frac{k-1}{2}}S_{a,\varpi}
   =\abs{\varpi}^{\frac{k-1}{2}}\legendre{a}{\varpi}\tau_{\varpi}.
\end{align*}
From the second equation in the proof of~~{\cite[Lemma~2.3]{vishe2023}}, we then get the desired identity.
\end{proof}

We now want to compute the sum
\begin{align*}
    S_r(f)
    &=\sum_{\substack{a\in\fqt \\ \abs{a}<\abs{r}\\ \gcd(a,r)=1}} S_{a,r}(f)
    =\sum_{\substack{a\in\fqt \\ \abs{a}<\abs{r}\\ \gcd(a,r)=1}} \prod_{i=1}^n S_{a_ia,r}
\end{align*}
for monic $r\in\fqt$, introduced in~\eqref{gl-S_r(f)Definition}. For this, we use the following lemma, which is the analogue of~{\cite[Lemma~2.13]{browning2021}} for function fields.

\begin{lem}\label{lem-S_r(f)ProduktTeilerfremd}
    Let $r_1,r_2\in\fqt$ be monic and coprime. Then, we have
    \[
        S_{r_1r_2}(f)=S_{r_1}(f)\,S_{r_2}(f).
    \]
\end{lem}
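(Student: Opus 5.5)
The plan is the standard Chinese-remainder argument. Write $r=r_1r_2$. Since $\gcd(r_1,r_2)=1$, the map
\[
    \fqt/(r_1)\times\fqt/(r_2)\to\fqt/(r),\quad (a_1,a_2)\mapsto a_1r_2+a_2r_1
\]
is a bijection. It restricts to a bijection between pairs of units and units, because $\gcd(a_1r_2+a_2r_1,r)=1$ if and only if $\gcd(a_1,r_1)=\gcd(a_2,r_2)=1$. Likewise every $\mathbf{b}$ modulo $r$ is uniquely $\mathbf{b}=r_2\mathbf{b}_1+r_1\mathbf{b}_2$ with $\abs{\mathbf{b}_j}<\abs{r_j}$.

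First I would note that the summand $\psi(af(\mathbf{b})/r)$ depends only on $a$ and $\mathbf{b}$ modulo $r$. This follows from Lemma~\ref{lem-psi}(i): changing $a$ or a coordinate of $\mathbf{b}$ by a multiple of $r$ changes $af(\mathbf{b})/r$ by an element of $\fqt$. Hence both sums defining $S_r(f)$ may be taken over complete residue systems, and we may substitute $a=a_1r_2+a_2r_1$ and $\mathbf{b}=r_2\mathbf{b}_1+r_1\mathbf{b}_2$.

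The key computation is then
\[
    \frac{(a_1r_2+a_2r_1)f(r_2\mathbf{b}_1+r_1\mathbf{b}_2)}{r_1r_2}.
\]
Since $f$ is quadratic, $f(r_2\mathbf{b}_1+r_1\mathbf{b}_2)=r_2^2f(\mathbf{b}_1)+r_1^2f(\mathbf{b}_2)+r_1r_2B(\mathbf{b}_1,\mathbf{b}_2)$ with $B$ having coefficients in $\fqt$. Expanding the product and discarding every term that lies in $\fqt$ leaves
\[
    \frac{a_1r_2f(\mathbf{b}_1)}{r_1}+\frac{a_2r_1f(\mathbf{b}_2)}{r_2}
\]
modulo $\fqt$. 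The cross terms $a_1r_2^2f(\mathbf{b}_1)/r_2$-type pieces reduce the same way; for instance $a_2r_1r_2^2f(\mathbf{b}_1)/(r_1r_2)=a_2r_2f(\mathbf{b}_1)\in\fqt$.

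By additivity of $\psi$ and Lemma~\ref{lem-psi}(i), the double sum therefore factors as
\[
    \Big(\sum_{a_1}\sum_{\mathbf{b}_1}\psi\big(\tfrac{a_1r_2f(\mathbf{b}_1)}{r_1}\big)\Big)\Big(\sum_{a_2}\sum_{\mathbf{b}_2}\psi\big(\tfrac{a_2r_1f(\mathbf{b}_2)}{r_2}\big)\Big).
\]
Finally, since $r_2$ is a unit modulo $r_1$, the map $a_1\mapsto a_1r_2$ permutes the units modulo $r_1$, so the first factor equals $S_{r_1}(f)$. Symmetrically the second factor equals $S_{r_2}(f)$.

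The only delicate point is justifying the passage to arbitrary residue representatives. The definitions use the specific representatives with $\abs{\cdot}<\abs{r}$, whereas $a_1r_2+a_2r_1$ and $r_2\mathbf{b}_1+r_1\mathbf{b}_2$ need not satisfy these degree bounds. The periodicity modulo $r$ noted in the first step, which relies on Lemma~\ref{lem-psi}(i), handles this, so I expect no genuine obstacle.
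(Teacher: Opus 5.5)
Your argument is correct and is the standard Chinese-remainder proof. The paper gives no proof of its own here; it only calls the lemma the function-field analogue of a cited integer result, and that result is proved by exactly this computation. One slip to fix: since $f(r_2\mathbf{b}_1)=r_2^2f(\mathbf{b}_1)$, the terms surviving modulo $\fqt$ are $\frac{a_1r_2^2f(\mathbf{b}_1)}{r_1}+\frac{a_2r_1^2f(\mathbf{b}_2)}{r_2}$, not the single powers of $r_2$ and $r_1$ you wrote. So in the last step you need that $a_1\mapsto a_1r_2^2$ permutes the reduced residues modulo $r_1$ (and symmetrically for $r_2$), which holds just as well and leaves the conclusion unchanged.
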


For every monic $r\in\fqt$ with $\deg(r)\ge 1$, we can uniquely write
\[
    r=\varpi_1^{k_1}\cdots \varpi_m^{k_m}
\]
for pairwise distinct monic irreducible elements $\varpi_1,\dots,\varpi_m\in\fqt$ and $k_1,\dots,k_m, m\in\N$. From Lemma~\ref{lem-S_r(f)ProduktTeilerfremd}, it follows that $S_r(f)$ then admits a factorisation
\begin{align}
    S_r(f)
    &=\prod_{i=1}^m S_{\varpi_i^{k_i}}(f).\label{gl-faktorisierungS_r(f)}
\end{align}
To compute $S_r(f)$, it therefore suffices to only look at sums of the form $S_{\varpi^k}(f)$ for a monic irreducible element $\varpi\in\fqt$ and $k\in\N$. For these exponential sums we have the following result~{\cite[Lemma~2.4]{vishe2023}}.

\begin{lem}\label{lem-S_varpiPotenz}
Let $\varpi\in\fqt$ be a monic irreducible element, and let $k\in\N$. If $n$ is even, we have
\[
    S_{\varpi^k}(f)=\legendre{(-1)^{\frac{n}{2}} a_1\cdots a_n}{\varpi^k}\varphi(\varpi^k) \abs{\varpi^k}^{\frac{n}{2}},
\]
and if $n$ is odd, we have
\[
    S_{\varpi^k}(f)=\begin{dcases}
        \varphi(\varpi^k) \abs{\varpi^k}^{\frac{n}{2}}&\textup{if }2\mid k,\\
        0&\textup{if }2\nmid k,
    \end{dcases}
\]
where $\varphi$ denotes the function field analogue of Euler's totient function.
\end{lem}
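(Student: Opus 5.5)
We compute $S_{\varpi^k}(f)$ directly from its definition as a sum over $a$ of products of one-variable sums. Since every $a$ in the range is coprime to $\varpi$, Lemma~\ref{lem-LegendresymbolRausziehen} applies to each factor $S_{a_ia,\varpi^k}$ (each $a_i\in\fq^\times$ is a unit, so $a_ia$ is still coprime to $\varpi$). The factorisation~\eqref{gl-S_a,r(f)=produkt} then gives
\[
S_{\varpi^k}(f)=\sum_{\substack{\abs{a}<\abs{\varpi^k}\\ \gcd(a,\varpi)=1}}\prod_{i=1}^n\legendre{a_ia}{\varpi^k}\tau_{\varpi^k}
=\tau_{\varpi^k}^{\,n}\legendre{a_1\cdots a_n}{\varpi^k}\sum_{\substack{\abs{a}<\abs{\varpi^k}\\ \gcd(a,\varpi)=1}}\legendre{a}{\varpi^k}^{n},
\]
using multiplicativity of the Jacobi symbol in the numerator.

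\textbf{The character sum over $a$.} If $n$ is even, $\legendre{a}{\varpi^k}^n=1$ for every unit $a$, so the sum equals $\varphi(\varpi^k)$. If $n$ is odd, the summand is $\legendre{a}{\varpi}^k$. For $k$ even this is $1$, and the sum is again $\varphi(\varpi^k)$. For $k$ odd it is the Legendre symbol modulo $\varpi$, a nontrivial character on $(\fqt/(\varpi^k))^\times$, whose sum vanishes. This already yields the vanishing case for $n$ odd and $k$ odd.

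\textbf{Evaluating $\tau_{\varpi^k}^n$.} This uses Lemma~\ref{lem-tau_varpikPotenz}. If $k$ is even, $\tau_{\varpi^k}^n=\abs{\varpi^k}^{n/2}$ and $\legendre{\cdot}{\varpi^k}=1$, which gives both even-$k$ formulas; for $n$ even the symbol of $(-1)^{n/2}a_1\cdots a_n$ is also $1$, so the stated formula is consistent. If $k$ is odd and $n$ is even,
\[
\tau_{\varpi^k}^n=i_p^{\,n\deg(\varpi)\nu}\abs{\varpi^k}^{n/2}=\bigl(i_p^{2}\bigr)^{\frac n2\deg(\varpi)\nu}\abs{\varpi^k}^{n/2}.
\]
Here $i_p^2=1$ for $p\equiv1\bmod 4$ and $i_p^2=-1$ for $p\equiv3\bmod4$, that is, $i_p^2=(-1)^{(p-1)/2}$.

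\textbf{Main obstacle.} The remaining point, and the one needing care, is to identify $(-1)^{\frac{p-1}{2}\deg(\varpi)\nu\frac n2}$ with $\legendre{-1}{\varpi}^{n/2}$. I would argue as follows. Since $\fqt/(\varpi)$ has $Q=q^{\deg\varpi}$ elements, Euler's criterion gives $\legendre{-1}{\varpi}=(-1)^{(Q-1)/2}$. Writing $Q=p^{\nu\deg\varpi}$, we have $(Q-1)/2\equiv \nu\deg(\varpi)(p-1)/2 \pmod 2$, because $p^m\equiv1\bmod4$ exactly when $m(p-1)/2$ is even. For odd $k$, $\legendre{-1}{\varpi^k}=\legendre{-1}{\varpi}$. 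Combining this with the factor $\legendre{a_1\cdots a_n}{\varpi^k}$ gives $\legendre{(-1)^{n/2}a_1\cdots a_n}{\varpi^k}$, and together with the sum $\varphi(\varpi^k)$ this is the claimed formula.
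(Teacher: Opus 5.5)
Your proposal is correct and follows the paper's proof essentially step for step. You factor via Lemma~\ref{lem-LegendresymbolRausziehen}, reduce to the character sum $\sum_a \legendre{a}{\varpi}^{nk}$ (which equals $\varphi(\varpi^k)$ or $0$), and evaluate $\tau_{\varpi^k}^n$ with Lemma~\ref{lem-tau_varpikPotenz}. The one cosmetic difference is how you identify $i_p^{\,n\deg(\varpi)\nu}$ with $\legendre{(-1)^{\frac{n}{2}}}{\varpi^k}$: you use Euler's criterion in $\fqt/(\varpi)$ together with a mod-$4$ parity check, whereas the paper uses $i_p^{\,2\nu}=(-1)^{\frac{q-1}{2}}$ and Rosen's formula for the Legendre symbol of a constant. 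Both express the same fact.
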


\begin{proof}
Let $a\in\fqt$ with $\abs{a}<\abs{\varpi}^k$ and $\gcd(a,\varpi^k)=1$. Applying~\eqref{gl-S_a,r(f)=produkt} and Lemma~\ref{lem-LegendresymbolRausziehen}, we get
 \[
  S_{a,\varpi^k}(f)=\prod_{i=1}^n S_{a_ia,\varpi^k}
  =\prod_{i=1}^n \legendre{a_ia}{\varpi^k}\tau_{\varpi^k}
  = \legendre{a_1\cdots a_n}{\varpi^k}\legendre{a}{\varpi^k}^n \tau_{\varpi^k}^n.
  \]
As $a$ and $\varpi$ are coprime, we have 
 \begin{align*}
     \legendre{a}{\varpi^k}^n=\legendre{a}{\varpi}^{nk}
     =\begin{dcases}
         1&\textup{if }2\mid n\textup{ or }2\mid k,\\
         \legendre{a}{\varpi}&\textup{else,}
     \end{dcases}
 \end{align*}
and thus
 \begin{align}
  S_{a,\varpi^k}(f)
  =\begin{dcases} 
  \legendre{a_1\cdots a_n}{\varpi^k}\tau_{\varpi^k}^n &\textup{if }2\mid n\textup{ or }2\mid k,\\
  \legendre{a}{\varpi}\legendre{a_1\cdots a_n}{\varpi^k}\tau_{\varpi^k}^n &\textup{else.}
  \end{dcases}\label{gl-S_awk(f)=Fallunterscheidung:nGerade,kGerade,sonst}
 \end{align}
Since the number of all $a\in\fqt$ with $\abs{a}<\abs{\varpi}^k$ and $\gcd(a,\varpi^k)=1$ is given by $\varphi(\varpi^k)$, we obtain
 \begin{align}
    S_{\varpi^k}(f)
    &=\sum_{\substack{a\in\fqt \\ \abs{a}<\abs{\varpi}^k \\ \gcd(a,\varpi^k)=1}} S_{a,\varpi^k}(f)
    =\legendre{a_1\cdots a_n}{\varpi^k} \varphi(\varpi^k)\tau_{\varpi^k}^n\label{gl-S_varpik(f)=...zwischenschritt}
 \end{align}
if $n$ or $k$ is even. For the case where $k$ is even, the Jacobi symbol in~\eqref{gl-S_varpik(f)=...zwischenschritt} is 1 and from Lemma~\ref{lem-tau_varpikPotenz} it then follows
\begin{align*}
    S_{\varpi^k}(f)
    &=\varphi(\varpi^k)\abs{\varpi^k}^{\frac{n}{2}}
    =\legendre{(-1)^{\frac{n}{2}} a_1\cdots a_n}{\varpi^k}\varphi(\varpi^k) \abs{\varpi^k}^{\frac{n}{2}}.
\end{align*}
If $n$ is even and $k$ is odd, we get
\begin{align*}
    S_{\varpi^k}(f)
    &=\legendre{a_1\cdots a_n}{\varpi^k} \varphi(\varpi^k)\left(-i_p^{\,\deg(\varpi)\nu}\abs{\varpi}^{\frac{k}{2}}\right)^n\\
    &=\legendre{a_1\cdots a_n}{\varpi^k}\varphi(\varpi^k)\, i_p^{\,\deg(\varpi)\nu n}\abs{\varpi^k}^{\frac{n}{2}}
\end{align*}
by applying Lemma~\ref{lem-tau_varpikPotenz} to~\eqref{gl-S_varpik(f)=...zwischenschritt}. As
\begin{align*}
    i_p^{\,2}=\begin{dcases}
    (-1)^2=1&\textup{if }p\equiv 1\mod 4,\\
    (-i)^2=-1&\textup{if }p\equiv 3\mod 4,
    \end{dcases}
\end{align*}
the value of $i_p^{\,2}$ coincides with the Legendre symbol $\legendre{-1}{p}$. We then get
\[
  i_p^{\,2\nu}
  =\legendre{-1}{p}^\nu 
  =\legendre{-1}{q}=(-1)^{\frac{q-1}{2}},
\]
and thus, if $n$ is even, it holds
\begin{align*}
    i_p^{\,\deg(\varpi) \nu n}
    =(-1)^{\frac{q-1}{2}\cdot \frac{\deg(\varpi) n}{2}}
    &=\legendre{-1}{\varpi}^{\frac{n}{2}}
    =\legendre{(-1)^{\frac{n}{2}}}{\varpi}
    =\legendre{(-1)^{\frac{n}{2}}}{\varpi^k},
\end{align*}
where we use general properties of the Jacobi symbol for function fields (see~{\cite[Propositions~3.2 and 3.4]{rosen2002}}). Therefore, for the case where $n$ is even and $k$ is odd, we obtain 
\begin{align*}
   S_{\varpi^k}(f)
   &=\legendre{a_1\cdots a_n}{\varpi^k}\varphi(\varpi^k)\legendre{(-1)^{\frac{n}{2}}}{\varpi^k} \abs{\varpi^k}^{\frac{n}{2}}\\
   &=\legendre{(-1)^{\frac{n}{2}} a_1\cdots a_n}{\varpi^k}\varphi(\varpi^k)\abs{\varpi^k}^{\frac{n}{2}}.
\end{align*}
Suppose now that both $n$ and $k$ are odd. Since the Legendre symbol~$\legendre{\cdot}{\varpi}$ is a non-trivial multiplicative character of~$\fqt/\!\left(\varpi^k\right)$, we have
\[
   \sum_{\substack{a\in\fqt \\ \abs{a}<\abs{\varpi}^k \\ \gcd(a,\varpi^k)=1}} \legendre{a}{\varpi}=0.
\]
From~\eqref{gl-S_awk(f)=Fallunterscheidung:nGerade,kGerade,sonst}, it then follows
\begin{align*}
   S_{\varpi^k}(f)
   &=\legendre{a_1\cdots a_n}{\varpi^k}\tau_{\varpi^k}^n\sum_{\substack{a\in\fqt \\ \abs{a}<\abs{\varpi}^k \\ \gcd(a,\varpi^k)=1}} \legendre{a}{\varpi}
   =0.\qedhere
\end{align*}
\end{proof}

Using the factorisation obtained in~\eqref{gl-faktorisierungS_r(f)}, we now get an exact formula for $S_r(f)$ for arbitrary monic $r\in\fqt$.

\begin{thm}\label{thm-S_r(f)}
Let $r\in\fqt$ be monic. If $n$ is even, we have
\[
   S_r(f)=\legendre{(-1)^{\frac{n}{2}}a_1\cdots a_n}{r}\varphi(r)\abs{r}^{\frac{n}{2}},
\]
and if $n$ is odd, we have
\[
    S_r(f)=\begin{dcases}
        \varphi(r)\abs{r}^{\frac{n}{2}} &\textup{if }r\textup{ is a square in }\fqt,\\
        0&\textup{else.}
    \end{dcases}
\]
\end{thm}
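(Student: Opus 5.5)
The plan is to combine the factorisation \eqref{gl-faktorisierungS_r(f)} with the prime-power evaluation in Lemma~\ref{lem-S_varpiPotenz}, and then reassemble the local factors using multiplicativity of the Jacobi symbol, of $\varphi$ and of $\abs{\cdot}$.

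First I dispose of the trivial case $r=1$. Here the only admissible $a$ is $a=0$, since $\gcd(0,1)=1$ and $\abs{0}<1$. The inner sums run over $\mathbf{b}=\mathbf{0}$ only, so $S_1(f)=1$. This agrees with both formulas, because $\varphi(1)=1$, $\abs{1}=1$, the Jacobi symbol modulo $1$ equals $1$, and $1$ is a square.

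For $\deg(r)\ge 1$ I write $r=\varpi_1^{k_1}\cdots\varpi_m^{k_m}$ as in the paper. By \eqref{gl-faktorisierungS_r(f)}, which rests on Lemma~\ref{lem-S_r(f)ProduktTeilerfremd}, we have $S_r(f)=\prod_i S_{\varpi_i^{k_i}}(f)$.

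\emph{Case $n$ even.} Inserting Lemma~\ref{lem-S_varpiPotenz} gives
\[
S_r(f)=\prod_{i=1}^m \legendre{(-1)^{\frac n2}a_1\cdots a_n}{\varpi_i^{k_i}}\,\varphi(\varpi_i^{k_i})\,\abs{\varpi_i^{k_i}}^{\frac n2}.
\]
The Jacobi symbol is by definition multiplicative in the modulus, so the first factors multiply to $\legendre{(-1)^{n/2}a_1\cdots a_n}{r}$. The totient $\varphi$ is multiplicative on coprime arguments, for instance by the Chinese remainder theorem $(\fqt/(r))^\times\cong\prod_i(\fqt/(\varpi_i^{k_i}))^\times$. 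Finally $\abs{\cdot}$ is multiplicative. Together these give the claimed formula.

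\emph{Case $n$ odd.} If some $k_i$ is odd, then the corresponding factor vanishes by Lemma~\ref{lem-S_varpiPotenz}, so $S_r(f)=0$. If all $k_i$ are even, the product equals $\prod_i\varphi(\varpi_i^{k_i})\abs{\varpi_i^{k_i}}^{n/2}=\varphi(r)\abs{r}^{n/2}$ by the same multiplicativity.

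It remains to see that, for monic $r$, all $k_i$ being even is equivalent to $r$ being a square in $\fqt$. If all $k_i$ are even, then $r=\bigl(\prod_i\varpi_i^{k_i/2}\bigr)^2$. Conversely, if $r=s^2$, then unique factorisation in $\fqt$ shows that every exponent of a monic irreducible in $r$ is even; the leading unit of $s$ only squares to a unit, which is harmless since $r$ is monic. This completes the case distinction.

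No step is a genuine obstacle, since all the analytic work is already contained in Lemma~\ref{lem-S_varpiPotenz}. The only points that need care are:
\begin{itemize}
\item that the Jacobi symbol is defined multiplicatively in the modulus, which is the convention referred to in the paper;
\item the multiplicativity of $\varphi$;
\item the convention for $r=1$.
\end{itemize}
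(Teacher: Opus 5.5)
Your proposal is correct and follows essentially the same route as the paper. The paper also handles $r=1$ directly, factors $S_r(f)$ over prime powers via \eqref{gl-faktorisierungS_r(f)}, inserts Lemma~\ref{lem-S_varpiPotenz}, and reassembles the factors by multiplicativity of the Jacobi symbol, $\varphi$ and $\abs{\cdot}$, reducing the odd case to all exponents $k_i$ being even, i.e.\ to $r$ being a square.
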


\begin{proof}
If $r=1$, we have
\[
    \psi\!\left(\frac{af(\mathbf{b})}{r}\right)=\psi(af(\mathbf{b}))=1
\]
for all $a\in\fqt$ and $\mathbf{b}\in\fqt^n$, and hence
\[
    S_r(f)=\sum_{\substack{a\in\fqt \\ \abs{a}<1\\ \gcd(a,1)=1}}\sum_{\substack{\mathbf{b}\in\fqt^n\\ \abs{\mathbf{b}}<1}} 1=1.
\]
Thus, we see that for~$r=1$ the identity in Theorem~\ref{thm-S_r(f)} holds in both cases since $r$ then is a square in~$\fqt$.

Let now $r\neq 1$. As $r$ is monic, we have $\deg(r)\ge 1$, and $r$ has a prime factorisation
 \[
    r=\varpi^{k_1}_1\cdots \varpi_m^{k_m}
\]
for pairwise distinct monic irreducible elements $\varpi_1,\dots,\varpi_m\in\fqt$ and $k_1,\dots,$ $k_m,$ $m\in\N$. According to~\eqref{gl-faktorisierungS_r(f)}, we can write
\[
    S_r(f)
    =\prod_{i=1}^m S_{\varpi_i^{k_i}}(f).
\]
If $n$ is even, applying Lemma~\ref{lem-S_varpiPotenz} and using that the Jacobi symbol and Euler's totient function are multiplicative yields
\begin{align*}
    S_r(f)
    &=\prod_{i=1}^m \legendre{(-1)^{\frac{n}{2}} a_1\cdots a_n}{\varpi_i^{k_i}}\varphi(\varpi_i^{k_i})\abs{\varpi_i^{k_i}}^{\frac{n}{2}}\\
    &=\left(\prod_{i=1}^m \legendre{(-1)^{\frac{n}{2}}a_1\cdots a_n}{\varpi_i^{k_i}}\right)\left(\prod_{i=1}^m\varphi(\varpi_i^{k_i})\right)\left(\prod_{i=1}^m \abs{\varpi_i^{k_i}}^{\frac{n}{2}}\right)\\
    &=\legendre{(-1)^{\frac{n}{2}}a_1\cdots a_n}{r}\varphi(r)\abs{r}^{\frac{n}{2}}.
\end{align*}

Suppose now that $n$ is odd. As per~\eqref{gl-faktorisierungS_r(f)}, we have $S_r(f)=0$ if and only if there is an $i\in\{1,\dots,m\}$ such that $S_{\varpi_i^{k_i}}(f)=0$, which is according to Lemma~\ref{lem-S_varpiPotenz} equivalent to $k_i$ being odd. Hence, it holds $S_r(f)\neq 0$ if and only if for every $i=1,\dots,m$ the prime factor $\varpi_i$ appears with even exponent $k_i$ in the prime factorisation of $r$, which is equivalent to $r$ being a square in $\fqt$. In this case, we then get 
\[
    S_r(f)
    =\prod_{i=1}^m \varphi\!\left(\varpi_i^{k_i}\right)\abs{\varpi_i^{k_i}}^{\frac{n}{2}}
    =\varphi(r)\abs{r}^{\frac{n}{2}}.
\]
\end{proof}

\section{Exact Formulas for \texorpdfstring{$N(P)$}{N(P)}}\label{kap-berechnungN(P)}
As noticed in~\eqref{gl-N(P)=SumS_r(f)Int}, we have
\begin{align*}
    N(P)
    &=\sum_{\substack{r\in\fqt\\ \abs{r}\le q^P\\ r\textup{ monic}}} \frac{S_r(f)}{\abs{r}^n}\, I_r,
\end{align*}
where for all $r\in\fqt$ the value of the integral $I_r$ is given by
\begin{align*}
    I_r =\begin{dcases}
        \frac{\abs{r}^nq^{n+1}}{q^{2P}}\sum_{k=0}^{P-\deg(r)-1}q^{nk}\,S_{t^{P-\deg(r)-k-1}}(f)&\textup{if }\abs{r}\le q^{P-1},\\
        q^{P(n-2)}&\textup{if }\abs{r}= q^P
    \end{dcases}
\end{align*}
according to Lemma~\ref{lem-Integral=quasiSumGaußsummen}. Since the value of $I_r$ depends on the absolute value of $r$, it will be convenient to write
\begin{align}
    N(P)
    &=\sum_{\substack{r\in\fqt\\ \abs{r}\le q^{P-1}\\ r \textup{ monic}}} \frac{S_r(f)}{\abs{r}^n}\, I_r
    + \sum_{\substack{r\in\fqt\\ \abs{r}=q^{P}\\ r \textup{ monic}}} \frac{S_r(f)}{\abs{r}^n}\, I_r.\label{gl-N(P)=1+2}
\end{align}
We now want to apply the results for $S_r(f)$ that we obtained in Theorem~\ref{thm-S_r(f)}. For this, we distinguish between the three cases
\begin{itemize}
    \item[(1)] $n$ is even and $(-1)^{\frac{n}{2}}a_1\cdots a_n$ is a square in $\fq^\times$,
    \item[(2)] $n$ is even and $(-1)^{\frac{n}{2}}a_1\cdots a_n$ is not a square in $\fq^\times$,
    \item[(3)] $n$ is odd. 
\end{itemize}

\subsection{Computation of \texorpdfstring{$N(P)$}{N(P)} if \texorpdfstring{$2\mid n$}{2|n} and \texorpdfstring{$(-1)^{\frac{n}{2}}a_1\cdots a_n\in (\fq^{\times})^2$}{-1 to the n/2 a1...an is a square in Fq}}
First, we consider the case where $n$ is even and $(-1)^{\frac{n}{2}}a_1\cdots a_n$ is a square in $\fq^\times$, i.e.
\[
   \left((-1)^{\frac{n}{2}}a_1\cdots a_n\right)^{\frac{q-1}{2}}=1.
\]
For all $a\in\fq^\times$ and monic $r\in\fqt$ with prime factorisation $r=\varpi_1^{k_1}\cdots \varpi_m^{k_m}$ for distinct monic irreducible elements $\varpi_1\dots,\varpi_m\in\fqt$ and $m,k_1,\dots,k_m\in\N$, we can evaluate the Jacobi symbol $\legendre{a}{r}$ via
\begin{align}
    \legendre{a}{r}
    &=\prod_{i=1}^m \legendre{a}{\varpi_i}^{k_i}
    =\prod_{i=1}^m a^{\frac{q-1}{2}\,\deg(\varpi_i)k_i}
    =\left(a^{\frac{q-1}{2}}\right)^{\deg(r)}\label{gl-JacobisymbolAuswertung}
\end{align}
by applying general properties of the Jacobi symbol for function fields (see~{\cite[Propositions~3.2 and 3.4]{rosen2002}}). Hence, in our case we have
\[
   \legendre{(-1)^{\frac{n}{2}}a_1\cdots a_n}{r}
   =1.
\]
Theorem~\ref{thm-S_r(f)} therefore yields
\begin{align}
    S_r(f)&=\varphi(r)\abs{r}^{\frac{n}{2}}\label{gl-Quadrat:S_r(f)}
\end{align}
for all monic $r\in\fqt$. In particular, we have
\begin{align}
    S_{t^k}(f)
    &=\varphi(t^k)\abs{t^k}^{\frac{n}{2}}
    =\begin{dcases}
    1&\textup{if }k=0,\\
    \frac{q-1}{q}\, q^{k\left(\frac{n}{2}+1\right)}&\textup{if }k\ge 1
    \end{dcases}\label{gl-Quadrat:S_tc}
\end{align}
for all $k\in\N_0$, where we apply~{\cite[Proposition~1.7]{rosen2002}} to evaluate $\varphi(t^k)$. 

To compute $N(P)$, we will insert~\eqref{gl-Quadrat:S_r(f)} and~\eqref{gl-Quadrat:S_tc} into the formula for $N(P)$ given in~\eqref{gl-N(P)=1+2}. As this includes computing sums of Euler's totient function, we will make use of the following identity~{\cite[Proposition~2.7]{rosen2002}}.

\begin{lem}\label{lem-sum:phi}
Let $\rho\in\N$. We have
\[
   \sum_{\substack{r\in\fqt \\ \deg(r)=\rho\\ r\textup{ monic}}} \varphi(r) = \frac{q-1}{q}\, q^{2\rho}.
\]
\end{lem}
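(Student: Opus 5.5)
The plan is to count, for each degree $\rho\ge 1$, the residues modulo each monic $r$ of degree $\rho$ and weight them by $\varphi(r)$. We write $\varphi(r)=\#(\fqt/(r))^\times$ and regroup the sum. A cleaner route is Möbius inversion together with the standard identity $\sum_{d\mid r} \varphi(d) = \abs{r}$, which is the function field analogue of Gauß's identity and follows since every residue class modulo $r$ generates an ideal $(d)$ with $d\mid r$ monic. From this identity we get $\varphi = \mu * \abs{\cdot}$ as arithmetic functions on monic polynomials. Hence
\[
\sum_{\substack{\deg(r)=\rho\\ r\textup{ monic}}}\varphi(r)=\sum_{\substack{\deg(d)+\deg(e)=\rho\\ d,e\textup{ monic}}}\mu(d)\abs{e}=\sum_{j=0}^{\rho} q^{\rho-j}\cdot q^{\rho-j}\sum_{\substack{\deg(d)=j\\ d\textup{ monic}}}\mu(d),
\]
where we use that there are $q^{\rho-j}$ monic $e$ of degree $\rho-j$, each with $\abs{e}=q^{\rho-j}$.

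Next we insert the evaluation of the Möbius sums already quoted in Section~\ref{kap-kreismethode}: the inner sum is $1$ for $j=0$, $-q$ for $j=1$, and $0$ for $j\ge 2$. Only two terms survive, and we obtain
\[
q^{2\rho}-q\cdot q^{2(\rho-1)}=q^{2\rho}-q^{2\rho-1}=\frac{q-1}{q}\,q^{2\rho},
\]
as claimed. The hypothesis $\rho\ge 1$ is exactly what guarantees that the $j=1$ term is present.

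The only step needing care is the divisor-sum identity $\sum_{d\mid r}\varphi(d)=\abs{r}$. To justify it, we partition $\fqt/(r)$ according to $\gcd(x,r)=r/d$. For each monic divisor $d$ of $r$, the elements with that gcd correspond bijectively to the units of $\fqt/(d)$, so they contribute exactly $\varphi(d)$ elements. Alternatively, one can avoid this identity altogether by using multiplicativity of $\varphi$ and the generating function $\sum_r \varphi(r)u^{\deg r}=\zeta(u\cdot q)/\zeta(u)$-style Euler products with $\zeta(u)=(1-qu)^{-1}$. This gives $(1-q u)/(1-q^2u)$, whose coefficient of $u^\rho$ for $\rho\ge1$ is $q^{2\rho}-q^{2\rho-1}$.
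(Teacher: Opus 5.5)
Your proof is correct. The paper gives no argument of its own for this lemma; it simply quotes \cite[Proposition~2.7]{rosen2002}. Your proposal therefore supplies a self-contained proof rather than reproducing one from the paper.

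Your first route uses only the identity $\sum_{d\mid r}\varphi(d)=\abs{r}$, which you justify correctly by sorting residues modulo $r$ according to $\gcd(x,r)$. It also uses the Möbius sums $\sum_{\deg(d)=j,\ d\text{ monic}}\mu(d)\in\{1,-q,0\}$. The paper already invokes these in Section~\ref{kap-kreismethode} to derive~\eqref{gl-tildeN(P)}. With your argument the lemma therefore follows from facts the paper already uses, instead of from an external reference.

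Your second route is the Euler product with $\varphi(\varpi^k)=\abs{\varpi}^k-\abs{\varpi}^{k-1}$, giving $\sum_r\varphi(r)u^{\deg(r)}=\zeta(qu)/\zeta(u)=(1-qu)/(1-q^2u)$. This is the standard zeta-function argument for results of this type. It handles all degrees at once, including $\rho=0$, where the coefficient is $1$ rather than $(q-1)/q$. That confirms your remark that $\rho\ge 1$ is needed, and matches the way Lemma~\ref{lem-geometrischeReihe} treats the degree-$0$ term separately.
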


Using Lemma~\ref{lem-sum:phi}, we can directly derive the following lemma.

\begin{lem}\label{lem-geometrischeReihe}
Let $M\in \N_0$ and $c\in\R$. It holds
\begin{align*}
    \sum_{\substack{r\in\fqt\\ \abs{r}\le q^M\\ r\textup{ monic}}} \frac{\varphi(r)}{\abs{r}^c}=
    \begin{dcases}
        1+\frac{q-1}{q}M &\textup{if }c=2,\\
        \frac{1-q^{1-c}}{1-q^{2-c}}-\frac{(q-1)q^{1-c}}{1-q^{2-c}}\, q^{M(2-c)}&\textup{else.}
    \end{dcases}
\end{align*}
Moreover, if $c=2$, we have
\begin{align*}
    \sum_{\substack{r\in\fqt\\ \abs{r}\le q^M\\ r\textup{ monic}}} (-1)^{\deg(r)}\,\frac{\varphi(r)}{\abs{r}^c}=
    \begin{dcases}
        1 &\textup{if }2\mid M,\\
        \frac{1}{q}&\textup{if }2\nmid M,
    \end{dcases}
    \end{align*}
and if $c\neq 2$, we have
\begin{align*}
    \sum_{\substack{r\in\fqt\\ \abs{r}\le q^M\\ r\textup{ monic}}} (-1)^{\deg(r)}\frac{\varphi(r)}{\abs{r}^c}
    &=\frac{1+q^{1-c}}{1+q^{2-c}}+(-1)^M\,\frac{(q-1)q^{1-c}}{1+q^{2-c}}\, q^{M(2-c)}.
\end{align*}
\end{lem}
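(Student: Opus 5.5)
We group the monic $r$ with $\abs{r}\le q^M$ by their degree $\rho=\deg(r)\in\{0,\dots,M\}$. All such $r$ have $\abs{r}=q^\rho$, and there is exactly one monic $r$ of degree $0$, namely $r=1$, with $\varphi(1)=1$. Lemma~\ref{lem-sum:phi} gives, for $\rho\ge 1$,
\[
   \sum_{\substack{r\in\fqt\\ \deg(r)=\rho\\ r\textup{ monic}}}\frac{\varphi(r)}{\abs{r}^c}=q^{-c\rho}\,\frac{q-1}{q}\,q^{2\rho}=\frac{q-1}{q}\,q^{\rho(2-c)}.
\]
So the first sum equals $1+\frac{q-1}{q}\sum_{\rho=1}^{M}x^\rho$ with $x\coloneq q^{2-c}$. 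The twisted sums only carry the extra factor $(-1)^\rho$, so they equal $1+\frac{q-1}{q}\sum_{\rho=1}^M(-x)^\rho$. Everything therefore reduces to evaluating finite geometric series.

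For the untwisted sum with $c=2$ we have $x=1$, and the sum is immediately $1+\frac{q-1}{q}M$. For $c\neq 2$ we have $x\neq 1$, and we use
\[
   \sum_{\rho=1}^M x^\rho=\frac{x-x^{M+1}}{1-x}.
\]
Since $\frac{q-1}{q}\,x=(q-1)q^{1-c}$, this gives
\[
   1+\frac{(q-1)q^{1-c}}{1-q^{2-c}}-\frac{(q-1)q^{1-c}}{1-q^{2-c}}\,q^{M(2-c)}.
\]
The constant part combines as $\frac{1-q^{2-c}+q^{2-c}-q^{1-c}}{1-q^{2-c}}=\frac{1-q^{1-c}}{1-q^{2-c}}$, which matches the claim.

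For the twisted sum with $c=2$, the series $\sum_{\rho=1}^M(-1)^\rho$ equals $0$ if $M$ is even and $-1$ if $M$ is odd. This gives $1$ and $1-\frac{q-1}{q}=\frac1q$ respectively. For $c\neq 2$ we have $-x\neq 1$, since $x>0$, so
\[
   \sum_{\rho=1}^M(-x)^\rho=\frac{-x-(-x)^{M+1}}{1+x}.
\]
Multiplying by $\frac{q-1}{q}$ gives $-\frac{(q-1)q^{1-c}}{1+q^{2-c}}+(-1)^M\frac{(q-1)q^{1-c}}{1+q^{2-c}}\,q^{M(2-c)}$. Adding $1$ combines the constants to $\frac{1+q^{2-c}-q^{2-c}+q^{1-c}}{1+q^{2-c}}=\frac{1+q^{1-c}}{1+q^{2-c}}$, as stated.

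There is no genuine obstacle here. The only points needing care are two. First, $\rho=0$ must be treated separately, because Lemma~\ref{lem-sum:phi} is stated only for $\rho\ge1$ and $\frac{q-1}{q}\ne1$. Second, the sign bookkeeping in $(-x)^{M+1}$ has to be done correctly. The case $M=0$ is consistent with all formulas, since every sum then equals $1$.
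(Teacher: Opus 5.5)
Your proposal is correct and follows the paper's proof essentially step by step: you group by degree, split off $r=1$, apply Lemma~\ref{lem-sum:phi} to reduce every sum to $1+\frac{q-1}{q}\sum_{\rho=1}^M (\pm q^{2-c})^\rho$, and then evaluate these geometric sums exactly as the paper does. Your algebra for combining the constant terms is correct in all four cases.
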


\begin{proof}
Since $1\in\fqt$ is the only monic element of degree $0$, applying Lemma~\ref{lem-sum:phi} yields 
\begin{align*}
\sum_{\substack{r\in\fqt\\ \abs{r}\le q^M\\ r\textup{ monic}}} \frac{\varphi(r)}{\abs{r}^c}
    &=\sum_{\rho=0}^{M} \frac{1}{q^{c\rho}}\sum_{\substack{r\in\fqt\\ \deg(r)=\rho\\ r\textup{ monic}}} \varphi(r)\\
    &=1+\sum_{\rho=1}^{M} \frac{1}{q^{c\rho}}\cdot\frac{q-1}{q}\, q^{2\rho} \\
    &=1+\frac{q-1}{q}\sum_{\rho=1}^{M}q^{\rho(2-c)}.
\end{align*}
If $c=2$, it immediately follows
\begin{align*}
    \sum_{\substack{r\in\fqt\\ \abs{r}\le q^M\\ r\textup{ monic}}} \frac{\varphi(r)}{\abs{r}^c}
    &=1+\frac{q-1}{q}\,M.
\end{align*}
Now we assume $c\neq 2$. Then, we can evaluate the partial sum of the geometric series via
\begin{align*}
    \sum_{\rho=1}^{M}q^{\rho(2-c)}
    &=q^{2-c}\,\frac{1-q^{M(2-c)}}{1-q^{2-c}},
\end{align*}
and we then obtain
\begin{align*}
    \sum_{\substack{r\in\fqt\\ \abs{r}\le q^M\\ r\textup{ monic}}} \frac{\varphi(r)}{\abs{r}^c}
    &= 1+ \frac{(q-1)q^{1-c}\left(1-q^{M(2-c)}\right)}{1-q^{2-c}}\\
    &=\frac{1-q^{1-c}}{1-q^{2-c}}-\frac{(q-1)q^{1-c}}{1-q^{2-c}}\, q^{M(2-c)}.
\end{align*}
Similarly, we get
\begin{align*}
     \sum_{\substack{r\in\fqt\\ \abs{r}\le q^M\\ r\textup{ monic}}} (-1)^{\deg(r)}\frac{\varphi(r)}{\abs{r}^c}
     &=\sum_{\rho=0}^M\left(-\frac{1}{q^c}\right)^\rho \sum_{\substack{r\in\fqt\\ \deg(r)=\rho\\ r\textup{ monic}}}\varphi (r)\\
     &=1+\frac{q-1}{q}\sum_{\rho=1}^M \left(-q^{2-c}\right)^\rho
\end{align*}
from Lemma~\ref{lem-sum:phi}. If $c=2$, we have
\begin{align*}
    \sum_{\rho=1}^M \left(-q^{2-c}\right)^\rho
    =\sum_{\rho=1}^M (-1)^\rho=\begin{dcases}
        0&\textup{if }2\mid M,\\
        -1&\textup{if }2\nmid M,
    \end{dcases}
\end{align*}
and hence
\begin{align*}
     \sum_{\substack{r\in\fqt\\ \abs{r}\le q^M\\ r\textup{ monic}}} (-1)^{\deg(r)}\,\frac{\varphi(r)}{\abs{r}^c}
     =\begin{dcases}
         1&\textup{if }2\mid M,\\
         1-\frac{q-1}{q}=\frac{1}{q}&\textup{if }2\nmid M.
     \end{dcases}
\end{align*}
Whereas if $c\neq 2$, it follows from 
\begin{align*}
    \sum_{\rho=1}^{M}\left(-q^{2-c}\right)^\rho
    &=-q^{2-c}\,\frac{1-\left(-q^{2-c}\right)^{M}}{1+q^{2-c}}
\end{align*}
that
\begin{align*}
    \sum_{\substack{r\in\fqt\\ \abs{r}\le q^M\\ r\textup{ monic}}} \frac{\varphi(r)}{\abs{r}^c}
    &= 1- \frac{(q-1)q^{1-c}\left(1-(-1)^{M}q^{M(2-c)}\right)}{1+q^{2-c}}\\
    &= \frac{1+q^{1-c}}{1+q^{2-c}}+(-1)^M \frac{(q-1)q^{1-c}}{1+q^{2-c}}\,q^{M(2-c)}.
\end{align*}
\end{proof}

We now return to the setting where $n$ is even and $(-1)^{\frac{n}{2}}a_1\cdots a_n$ is a square in $\fq^\times$. The main step to compute $N(P)$ consists of the following lemma, where we consider the first sum in~\eqref{gl-N(P)=1+2}.

\begin{lem}\label{lem-Quadrat:1.Summe}
Let $n$ be even, and let $(-1)^{\frac{n}{2}}a_1\cdots a_n$ be a square in $\fq^\times$. For $n=4$ we have
\begin{align*}
      \sum_{\substack{r\in\fqt\\ \abs{r}\le q^{P-1}\\ r \textup{ monic}}}\frac{S_r(f)}{\abs{r}^n}\, I_r
    &=\frac{q^2-1}{q}Pq^{2P}+\frac{1}{q}\, q^{2P},
\end{align*}
and for $n\ge 6$ we have
\begin{align*}
    \sum_{\substack{r\in\fqt\\ \abs{r}\le q^{P-1}\\ r \textup{ monic}}}\frac{S_r(f)}{\abs{r}^n}\, I_r
    &=\frac{q^{\frac{n}{2}}-1}{q^{\frac{n}{2}-1}-q}\, q^{P(n-2)}-\frac{q^2-1}{q\left(1-q^{2-\frac{n}{2}}\right)} \, q^{\frac{n}{2}P}.
\end{align*}
\end{lem}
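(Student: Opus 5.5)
The plan is to substitute the explicit formulas into the first sum of~\eqref{gl-N(P)=1+2} and reduce everything to finite geometric sums. By~\eqref{gl-Quadrat:S_r(f)} we have $S_r(f)=\varphi(r)\abs{r}^{\frac{n}{2}}$. For $\rho=\deg(r)\le P-1$, Lemma~\ref{lem-Integral=quasiSumGaußsummen} gives $I_r$. Since the factor $\abs{r}^n$ in $I_r$ cancels the denominator $\abs{r}^n$, the sum becomes
\begin{align*}
    \frac{q^{n+1}}{q^{2P}}\sum_{\rho=0}^{P-1}\Biggl(\sum_{\substack{r\textup{ monic}\\ \deg(r)=\rho}}\varphi(r)\Biggr)q^{\frac{n}{2}\rho}\sum_{k=0}^{P-\rho-1}q^{nk}\,S_{t^{P-\rho-k-1}}(f).
\end{align*}
The inner $r$-sum is evaluated by Lemma~\ref{lem-sum:phi} for $\rho\ge1$; it equals $1$ for $\rho=0$.

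Next I will reindex the $k$-sum by $j\coloneq P-\rho-1-k$ and set $m\coloneq P-\rho-1$. By~\eqref{gl-Quadrat:S_tc} the inner sum is
\begin{align*}
    q^{nm}\Bigl(1+\frac{q-1}{q}\sum_{j=1}^{m}q^{j(1-\frac{n}{2})}\Bigr).
\end{align*}
Since $n\ge4$, the exponent $1-\frac{n}{2}$ is negative, so this is a nondegenerate geometric sum. Its closed form is $q^{nm}(\alpha-\beta q^{m(1-\frac{n}{2})})$ for explicit constants $\alpha,\beta$ depending only on $q$ and $n$. The result is therefore a combination of two pure powers, $q^{nm}$ and $q^{m(1+\frac{n}{2})}$.

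Inserting this into the outer sum and writing $m=P-\rho-1$, the $\rho$-dependence in the two pieces is as follows. The first piece has weight $q^{2\rho}q^{\frac{n}{2}\rho}q^{-n\rho}=q^{\rho(2-\frac{n}{2})}$. The second piece has weight $q^{2\rho}q^{\frac{n}{2}\rho}q^{-\rho(1+\frac{n}{2})}=q^{\rho}$. Each outer sum is then handled the same way as in the proof of Lemma~\ref{lem-geometrischeReihe}: separate the $\rho=0$ term and sum the geometric series for $\rho=1,\dots,P-1$.

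For $n\ge6$ both ratios $q^{2-\frac{n}{2}}$ and $q$ differ from $1$. The result is a linear combination of $q^{P(n-2)}$, $q^{\frac{n}{2}P}$ and possibly $q^{P}$-type terms. I expect the latter to cancel, since the stated formula has only two terms, and I will check this explicitly. Collecting coefficients should give $\frac{q^{n/2}-1}{q^{n/2-1}-q}$ and $-\frac{q^2-1}{q(1-q^{2-n/2})}$.

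For $n=4$ the first outer ratio is $q^{0}=1$. That sum contributes $1+\frac{q-1}{q}(P-1)$ times a pure power of $q^{P}$, which produces the $Pq^{2P}$ term. The second piece again gives a power $q^{2P}$ after summation, and combining constants should yield $\frac{q^2-1}{q}Pq^{2P}+\frac1q q^{2P}$.

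The main obstacle is bookkeeping rather than any conceptual step. It consists of correctly tracking the boundary terms ($\rho=0$ and $j=0$, where the formulas of Lemma~\ref{lem-sum:phi} and~\eqref{gl-Quadrat:S_tc} differ), the shift $m=P-\rho-1$, and the overall prefactor $q^{n+1-2P}$, so that the lower-order $q^{P}$ and constant contributions cancel exactly. To control this, I would first verify the final formula for $P=1$, where only $r=1$ contributes and $I_1=q^{n+1-2}$. As a second check I would use $P=2$ before finalizing the algebra.
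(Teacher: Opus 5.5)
Your proposal is correct and follows essentially the same route as the paper. You evaluate the inner $k$-sum in closed form, so that each term becomes a combination of $q^{P(n-2)}$ and $q^{P(\frac n2-1)+\rho(\frac n2-1)}$. You then sum over $r$ with the geometric sums of Lemma~\ref{lem-geometrischeReihe}, using $c=\frac n2$ (degenerate exactly when $n=4$) and $c=1$.

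The $q^P$-type terms you were worried about never appear. The $c=1$ sum is exactly $\sum_{\abs{r}\le q^{P-1}}\varphi(r)/\abs{r}=q^{P-1}$, so that piece is a pure multiple of $q^{\frac n2 P}$.
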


\begin{proof}
    Let $r\in\fqt$ be monic with $\rho\coloneq \deg(r)\le P-1$. Applying~\eqref{gl-Quadrat:S_tc} yields
\begin{align*}
    s_r\coloneq \sum_{k=0}^{P-\rho-1}q^{nk}S_{t^{P-\rho-k-1}}(f)
    &=\sum_{k=0}^{P-\rho-2} q^{nk}\cdot\frac{q-1}{q}\, q^{(P-\rho-k-1)\left(\frac{n}{2}+1\right)}+q^{n(P-\rho-1)}\\
    &=\frac{q-1}{q}\, q^{(P-\rho-1)\left(\frac{n}{2}+1\right)}\sum_{k=0}^{P-\rho-2}\left(q^{\frac{n}{2}-1}\right)^k +q^{n(P-\rho-1)}.
\end{align*}
From
\begin{align*}
    \sum_{k=0}^{P-\rho-2}\left(q^{\frac{n}{2}-1}\right)^k 
    &= \frac{\left(q^{\frac{n}{2}-1}\right)^{P-\rho-1}-1}{q^{\frac{n}{2}-1}-1}
\end{align*}
for $n\ge 4$, we hence get
\begin{align*}
    s_r
    &=\frac{q-1}{q}\cdot\frac{q^{-\left(\frac{n}{2}+1\right)}q^{1-\frac{n}{2}}}{q^{\frac{n}{2}-1}-1}\, q^{nP-n\rho} -\frac{q-1}{q}\cdot\frac{q^{-\left(\frac{n}{2}+1\right)}}{q^{\frac{n}{2}-1}-1}\, q^{P\left(\frac{n}{2}-1\right)-\rho\left(\frac{n}{2}-1\right)}+q^{n(P-\rho-1)}\\
    &=\left(\frac{q-1}{q^{n+1}\left(q^{\frac{n}{2}-1}-1\right)}+\frac{1}{q^n}\right) q^{nP-n\rho} -\frac{q-1}{q^{\frac{n}{2}+2}\left(q^{\frac{n}{2}-1}-1\right)}\,q^{P\left(\frac{n}{2}+1\right)-\rho\left(\frac{n}{2}+1\right)}.
\end{align*}
It then follows
\begin{align*}
    I_r=\frac{q^{n\rho+n+1}}{q^{2P}}\cdot s_r
    &=\left(\frac{q-1}{q^{\frac{n}{2}-1}-1}+q\right) q^{P(n-2)} -\frac{(q-1)q^{\frac{n}{2}-1}}{q^{\frac{n}{2}-1}-1}\, q^{P\left(\frac{n}{2}-1\right)+\rho\left(\frac{n}{2}-1\right)} \\
    &=\frac{q^{\frac{n}{2}}-1}{q^{\frac{n}{2}-1}-1}\, q^{P(n-2)} -\frac{(q-1)q^{\frac{n}{2}-1}}{q^{\frac{n}{2}-1}-1}\, q^{P\left(\frac{n}{2}-1\right)+\rho\left(\frac{n}{2}-1\right)}.
\end{align*}
Therefore, from~\eqref{gl-Quadrat:S_r(f)} we obtain
\begin{align}
    \sum_{\substack{r\in\fqt\\ \abs{r}\le q^{P-1}\\ r\textup{ monic}}}\frac{S_r(f)}{\abs{r}^n} \, I_r
    &=\sum_{\substack{r\in\fqt\\ \abs{r}\le q^{P-1}\\ r\textup{ monic}}} \frac{\varphi(r)}{\abs{r}^{\frac{n}{2}}}\cdot \frac{q^{\frac{n}{2}}-1}{q^{\frac{n}{2}-1}-1}\, q^{P(n-2)} \notag\\
    &\quad - \sum_{\substack{r\in\fqt\\ \abs{r}\le q^{P-1}\\ r\textup{ monic}}} \frac{\varphi(r)}{\abs{r}^{\frac{n}{2}}}\cdot \frac{(q-1)q^{\frac{n}{2}-1}}{q^{\frac{n}{2}-1}-1}\,q^{P\left(\frac{n}{2}-1\right)}\abs{r}^{\frac{n}{2}-1}\notag\\
    &= \frac{q^{\frac{n}{2}}-1}{q^{\frac{n}{2}-1}-1}\, q^{P(n-2)}\sum_{\substack{r\in\fqt\\ \abs{r}\le q^{P-1}\\ r\textup{ monic}}} \frac{\varphi(r)}{\abs{r}^{\frac{n}{2}}}  -\frac{(q-1)q^{\frac{n}{2}-1}}{q^{\frac{n}{2}-1}-1}\, q^{P\left(\frac{n}{2}-1\right)} \sum_{\substack{r\in\fqt\\ \abs{r}\le q^{P-1}\\ r\textup{ monic}}}\frac{\varphi(r)}{\abs{r}}.\label{gl-Quadrat:SumS_r(f)I_rGrad<=P-1}
\end{align}
According to Lemma~\ref{lem-geometrischeReihe}, we have
\begin{align}
    \sum_{\substack{r\in\fqt\\ \abs{r}\le q^{P-1}\\ r\textup{ monic}}}\frac{\varphi(r)}{\abs{r}}
    &=-\frac{q-1}{1-q}\cdot q^{P-1}
    =q^{P-1}\label{gl-geomsum:phi=qP-1}
\end{align}
and
\begin{align*}
    \sum_{\substack{r\in\fqt\\ \abs{r}\le q^{P-1}\\ r\textup{ monic}}} \frac{\varphi(r)}{\abs{r}^{\frac{n}{2}}}
    &=\begin{dcases}
        1+\frac{q-1}{q}(P-1)&\textup{if }n=4,\\
        \frac{1-q^{1-\frac{n}{2}}}{1-q^{2-\frac{n}{2}}}-\frac{(q-1)q^{1-\frac{n}{2}}}{1-q^{2-\frac{n}{2}}}\,q^{P\left(2-\frac{n}{2}\right)+\frac{n}{2}-2} &\textup{if }n\ge 6
    \end{dcases}\\
    &=\begin{dcases}
        \frac{1}{q}+\frac{q-1}{q}\,P &\textup{if }n=4,\\
        \frac{1-q^{1-\frac{n}{2}}}{1-q^{2-\frac{n}{2}}}-\frac{q-1}{q\left(1-q^{2-\frac{n}{2}}\right)}\,q^{P\left(2-\frac{n}{2}\right)} &\textup{if }n\ge 6.
    \end{dcases}
\end{align*}
Hence, if $n=4$, inserting into~\eqref{gl-Quadrat:SumS_r(f)I_rGrad<=P-1} yields
\begin{align*}
 \sum_{\substack{r\in\fqt\\ \abs{r}\le q^{P-1}\\ r\textup{ monic}}}\frac{S_r(f)}{\abs{r}^n} \, I_r
 &=\frac{q^{2}-1}{q-1}\, q^{2P}\left(\frac{1}{q}+\frac{q-1}{q}P\right)-\frac{(q-1)q}{q-1}\, q^{2P-1}\\
    &= \frac{q^{2}-1}{q}Pq^{2P}+\left(\frac{q+1}{q}-1\right)q^{2P}\\
    &= \frac{q^2-1}{q}Pq^{2P}+\frac{1}{q}\,q^{2P}.
\end{align*}
Whereas if $n\ge 6$, we obtain
\begin{align*}
     \sum_{\substack{r\in\fqt\\ \abs{r}\le q^{P-1}\\ r\textup{ monic}}}\frac{S_r(f)}{\abs{r}^n} \, I_r
    =&\: \frac{q^{\frac{n}{2}}-1}{q^{\frac{n}{2}-1}-1}\, q^{P(n-2)}\left(\frac{1-q^{1-\frac{n}{2}}}{1-q^{2-\frac{n}{2}}}-\frac{q-1}{q\left(1-q^{2-\frac{n}{2}}\right)}\,q^{P\left(2-\frac{n}{2}\right)}\right)\\
    &\:-\frac{(q-1)q^{\frac{n}{2}-1}}{q^{\frac{n}{2}-1}-1}\, q^{P\left(\frac{n}{2}-1\right)}\, q^{P-1}\\
    =&\: \frac{q^{\frac{n}{2}}-1}{q^{\frac{n}{2}-1}\left(1-q^{2-\frac{n}{2}}\right)}\, q^{P(n-2)}-\left(\frac{(q-1)\left(q^{\frac{n}{2}}-1\right)}{q\left(q^{\frac{n}{2}-1}-1\right)\left(1-q^{2-\frac{n}{2}}\right)}+\frac{(q-1)q^{\frac{n}{2}-2}}{q^{\frac{n}{2}-1}-1}\right) q^{\frac{n}{2}P}
\end{align*}
from~\eqref{gl-Quadrat:SumS_r(f)I_rGrad<=P-1}, where the coefficient of $q^{\frac{n}{2}P}$ is then given by the negative of
\begin{align*}
    \frac{q-1}{q\left(q^{\frac{n}{2}-1}-1\right)}\left( \frac{q^{\frac{n}{2}}-1}{1-q^{2-\frac{n}{2}}}+q^{\frac{n}{2}-1}\right)
    &=\frac{(q-1)\left(q^{\frac{n}{2}}+q^{\frac{n}{2}-1}-q-1\right)}{q\left(q^{\frac{n}{2}-1}-1\right)\left(1-q^{2-\frac{n}{2}}\right)}
    =\frac{(q-1)(q+1)}{q\left(1-q^{2-\frac{n}{2}}\right)}.
\end{align*}
Overall, we thus get
\begin{align*}
    \sum_{\substack{r\in\fqt\\ \abs{r}\le q^{P-1}\\ r\textup{ monic}}}\frac{S_r(f)}{\abs{r}^n} \, I_r
    &=\frac{q^{\frac{n}{2}}-1}{q^{\frac{n}{2}-1}-q}\, q^{P(n-2)}-\frac{q^2-1}{q\left(1-q^{2-\frac{n}{2}}\right)} \, q^{\frac{n}{2}P}
\end{align*}
for $n\ge 6$.
\end{proof}

From this, we obtain a formula for $N(P)$ for $n\ge 4$.

\begin{thm}\label{thm-Quadrat:N(P)}
Let $n$ be even and let $(-1)^{\frac{n}{2}}a_1\cdots a_n$ be a square in $\fq^\times$. For $n=4$ we have
\begin{align*}
    N(P)&=\frac{q^2-1}{q}\, Pq^{2P}+q^{2P},
\end{align*}
and for $n\ge 6$ we have
\begin{align*}
    N(P)&=\frac{q^{\frac{n}{2}}-1}{q^{\frac{n}{2}-1}-q} \, q^{P(n-2)}-(q-1)\frac{q^{\frac{n}{2}-1}+1}{q^{\frac{n}{2}-1}-q}\, q^{\frac{n}{2}P}.
\end{align*}
\end{thm}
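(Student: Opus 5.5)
We split $N(P)$ as in~\eqref{gl-N(P)=1+2}. The first sum, over monic $r$ with $\abs{r}\le q^{P-1}$, is exactly what Lemma~\ref{lem-Quadrat:1.Summe} evaluates, so only the second sum, over monic $r$ with $\deg(r)=P$, remains. For these $r$, Lemma~\ref{lem-Integral=quasiSumGaußsummen} gives $I_r=q^{P(n-2)}$. By~\eqref{gl-Quadrat:S_r(f)} we have $S_r(f)/\abs{r}^n=\varphi(r)\abs{r}^{-n/2}=\varphi(r)q^{-nP/2}$. Lemma~\ref{lem-sum:phi} (valid since $P\ge 1$) then yields
\begin{align*}
\sum_{\substack{r\in\fqt\\ \abs{r}=q^{P}\\ r \textup{ monic}}} \frac{S_r(f)}{\abs{r}^n}\, I_r
= q^{P(n-2)}\,q^{-\frac{n}{2}P}\,\frac{q-1}{q}\,q^{2P}
=\frac{q-1}{q}\,q^{\frac{n}{2}P}.
\end{align*}

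For $n=4$ this contributes $\frac{q-1}{q}q^{2P}$. Adding it to $\frac{q^2-1}{q}Pq^{2P}+\frac1q q^{2P}$ from Lemma~\ref{lem-Quadrat:1.Summe} gives $\frac{q^2-1}{q}Pq^{2P}+q^{2P}$, as claimed.

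For $n\ge 6$ the main term $\frac{q^{n/2}-1}{q^{n/2-1}-q}q^{P(n-2)}$ comes directly from Lemma~\ref{lem-Quadrat:1.Summe}. The coefficient of $q^{\frac n2 P}$ is
\[
-\frac{q^2-1}{q(1-q^{2-\frac n2})}+\frac{q-1}{q}.
\]
Write $q(1-q^{2-\frac n2})=q^{2-\frac n2}(q^{\frac n2-1}-q)$, so the first fraction equals $\frac{(q^2-1)q^{\frac n2-2}}{q^{\frac n2-1}-q}$. Putting both terms over the common denominator $q(q^{\frac n2-1}-q)$ and factoring out $q-1$, the numerator becomes
\[
-(q-1)\bigl[(q+1)q^{\frac n2-1}-q^{\frac n2-1}+q\bigr]=-(q-1)\,q\,\bigl(q^{\frac n2-1}+1\bigr).
\]
Hence the coefficient is $-(q-1)\frac{q^{\frac n2-1}+1}{q^{\frac n2-1}-q}$, which matches the statement.

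There is no real obstacle. The only points needing care are checking that Lemma~\ref{lem-sum:phi} applies to $\rho=P\ge1$, and the final algebraic simplification, which we will verify by clearing denominators as above.
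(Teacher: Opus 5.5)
Your proposal is correct and follows the paper's proof essentially step for step. You evaluate the $\deg(r)=P$ part of the split~\eqref{gl-N(P)=1+2} as $\frac{q-1}{q}q^{\frac n2 P}$ using $I_r=q^{P(n-2)}$, \eqref{gl-Quadrat:S_r(f)} and Lemma~\ref{lem-sum:phi}, add the value from Lemma~\ref{lem-Quadrat:1.Summe}, and your simplification of the $q^{\frac n2 P}$ coefficient over the common denominator $q(q^{\frac n2-1}-q)$ is just a rearrangement of the paper's identity for $\frac{q^2-1}{q(1-q^{2-\frac n2})}-\frac{q-1}{q}$, with the arithmetic checking out.
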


\begin{proof}
In Lemma~\ref{lem-Quadrat:1.Summe} we have already computed the first sum in~\eqref{gl-N(P)=1+2}, so we now need to look at the second sum. From Lemma~\ref{lem-Integral=quasiSumGaußsummen}, we know that
\[
    I_r=q^{P(n-2)}
\]
for all monic $r\in\fqt$ with $\deg(r)=P$, and hence, from~\eqref{gl-Quadrat:S_r(f)} it follows
\begin{align*}
    \sum_{\substack{r\in\fqt\\ \abs{r}=q^P\\ r\textup{ monic}}} \frac{S_r(f)}{\abs{r}^n}\, I_r
    &= \sum_{\substack{r\in\fqt\\ \abs{r}=q^P\\ r\textup{ monic}}} \frac{\varphi(r)\left(q^P\right)^{\frac{n}{2}}}{\left(q^P\right)^n}\,q^{P(n-2)}
    =q^{P\left(\frac{n}{2}-2\right)}\sum_{\substack{r\in\fqt\\ \abs{r}=q^P\\ r\textup{ monic}}}\varphi(r).
\end{align*}
Applying Lemma~\ref{lem-sum:phi} thus yields
\begin{align*}
    \sum_{\substack{r\in\fqt\\ \abs{r}=q^P\\ r\textup{ monic}}} \frac{S_r(f)}{\abs{r}^n}\, I_r
    &=q^{P\left(\frac{n}{2}-2\right)}\, \frac{q-1}{q}\,q^{2P}
    =\frac{q-1}{q}\, q^{\frac{n}{2}P},
\end{align*}
which we can insert into~\eqref{gl-N(P)=1+2}. For $n=4$ we then obtain
\begin{align*}
    N(P)
    &=\frac{q^2-1}{q}Pq^{2P}+\frac{1}{q}\, q^{2P} + \frac{q-1}{q}\, q^{2P}
    =\frac{q^2-1}{q}Pq^{2P}+q^{2P},
\end{align*}
and for $n\ge 6$ we get
\begin{align*}
    N(P)
    &=\frac{q^{\frac{n}{2}}-1}{q^{\frac{n}{2}-1}-q}\, q^{P(n-2)}-\frac{q^2-1}{q\left(1-q^{2-\frac{n}{2}}\right)}  \, q^{\frac{n}{2}P}
    + \frac{q-1}{q}\,q^{\frac{n}{2}P}.
\end{align*}
Computing
\begin{align}
\frac{q^2-1}{q\left(1-q^{2-\frac{n}{2}}\right)}-\frac{q-1}{q}
    &=\frac{q-1}{q}\left(\frac{q+1}{1-q^{2-\frac{n}{2}}}-1\right)\notag\\
    &=\frac{q-1}{q}\cdot \frac{q+q^{2-\frac{n}{2}}}{1-q^{2-\frac{n}{2}}}
    =(q-1)\frac{q^{\frac{n}{2}-1}+1}{q^{\frac{n}{2}-1}-q}\label{gl-brücheSumme}
\end{align}
then gives
\begin{align*}
    N(P)
    &=\frac{q^{\frac{n}{2}}-1}{q^{\frac{n}{2}-1}-q}\, q^{P(n-2)}- (q-1)\frac{q^{\frac{n}{2}-1}+1}{q^{\frac{n}{2}-1}-q}\,q^{\frac{n}{2}P}.
\end{align*}
\end{proof}

We can now prove the main result, which was stated in Theorem~\ref{thm-quadrat:mor}.

\begin{proof}[Proof of Theorem~\ref{thm-quadrat:mor}]
To compute $\#\mor_P(\PP^1,X)(\fq)$, we insert the formulas from Theorem~\ref{thm-Quadrat:N(P)} into~\eqref{gl-mor_P}. For~$n=4$ this yields
\begin{align*}
    \#\mor_P(\PP^1,X)(\fq)
    &=\frac{q+1}{q}(P+1)q^{2(P+1)}+\frac{1}{q-1}\,q^{2(P+1)}-\frac{(q+1)^2}{q}Pq^{2P}-\frac{q+1}{q-1}\,q^{2P}\\
    &\quad +(q+1)(P-1)q^{2(P-1)}+\frac{q}{q-1}\,q^{2(P-1)}\\
    &=\frac{q^3(q+1)-q(q+1)^2+q+1}{q^2} Pq^{2P}\\
    &\quad+\frac{q^3(q^2-1)+q^4-q^2(q+1)-(q^2-1)+q}{q^2(q-1)}\,q^{2P}\\
    &=\frac{\left(q^2-1\right)^2}{q^2}Pq^{2P}+\frac{\left(q^2-1\right)\left(q+1\right)^2}{q^2}\,q^{2P}.
\end{align*}
If $n\ge 6$, we have
\begin{align*}
    \frac{N(P+1)}{q-1}
    &=\frac{\left(q^{\frac{n}{2}}-1\right)q^{n-2}}{\left(q^{\frac{n}{2}-1}-q\right)(q-1)}\, q^{P(n-2)}-\frac{\left(q^{\frac{n}{2}-1}+1\right)q^{\frac{n}{2}}}{q^{\frac{n}{2}-1}-q}\,q^{\frac{n}{2}P},\\
    -\frac{(q+1)N(P)}{q-1}
    &=-\frac{\left(q^{\frac{n}{2}}-1\right)(q+1)}{\left(q^{\frac{n}{2}-1}-q\right)(q-1)}\, q^{P(n-2)}+\frac{\left(q^{\frac{n}{2}-1}+1\right)(q+1)}{q^{\frac{n}{2}-1}-q}\,q^{\frac{n}{2}P}\\
\intertext{as well as}
    \frac{qN(P-1)}{q-1}
    &=\frac{\left(q^{\frac{n}{2}}-1\right)q^{3-n}}{\left(q^{\frac{n}{2}-1}-q\right)(q-1)}\, q^{P(n-2)}-\frac{\left(q^{\frac{n}{2}-1}+1\right)q^{1-\frac{n}{2}}}{q^{\frac{n}{2}-1}-q}\,q^{\frac{n}{2}P}.
\end{align*}
By summing, we hence obtain
\begin{align*}
    \#\mor_P(\PP^1,X)(\fq)
    &=\frac{\left(q^{\frac{n}{2}}-1\right)\left(q^{n-2}-q-1+q^{3-n}\right)}{\left(q^{\frac{n}{2}-1}-q\right)(q-1)}\, q^{P(n-2)}\\
    &\quad-\frac{\left(q^{\frac{n}{2}-1}+1\right)\left(q^{\frac{n}{2}}-q-1+q^{1-\frac{n}{2}}\right)}{q^{\frac{n}{2}-1}-q}\,q^{\frac{n}{2}P}\\
    &=\frac{\left(q^{\frac{n}{2}}-1\right)\left(q^{n-2}-1\right)\left(1-q^{3-n}\right)}{\left(q^{\frac{n}{2}-1}-q\right)(q-1)}\, q^{P(n-2)}
    - \frac{\left(q^{n-1}-q\right)\left(1-q^{-\frac{n}{2}}\right)}{q^{\frac{n}{2}-1}-q}\,q^{\frac{n}{2}P}\\
    &=\frac{\left(q^{\frac{n}{2}}-1\right)\left(q^{n-2}-1\right)\left(q^{n-3}-1\right)}{q^{n-2}\left(q^{\frac{n}{2}-2}-1\right)(q-1)}\, q^{P(n-2)}
    - \frac{\left(q^{n-2}-1\right)\left(q^{\frac{n}{2}}-1\right)}{q^{\frac{n}{2}}\left(q^{\frac{n}{2}-2}-1\right)}\,q^{\frac{n}{2}P}.
\end{align*}
\end{proof}

\subsection{Computation of \texorpdfstring{$N(P)$}{N(P)} if \texorpdfstring{$2\mid n$}{2|n} and \texorpdfstring{$(-1)^{\frac{n}{2}}a_1\cdots a_n\notin (\fq^{\times})^2$}{-1 to the n/2 a1...an is not a square in Fq}}
Now we consider the case where $n$ is even and $(-1)^{\frac{n}{2}}a_1\cdots a_n$ is not a square in $\fq^\times$, i.e.
\[
    \left((-1)^{\frac{n}{2}}a_1\cdots a_n\right)^{\frac{q-1}{2}}=-1.
\]
Since for all monic $r\in\fqt$ we then have
\[
   \legendre{(-1)^{\frac{n}{2}}a_1\cdots a_n}{r}
   =(-1)^{\deg(r)}
\]
as per~\eqref{gl-JacobisymbolAuswertung}, we obtain
\begin{align}
    S_r(f)
    &=(-1)^{\deg(r)}\varphi(r)\abs{r}^{\frac{n}{2}}\label{gl-keinQuadrat:S_r(f)}
\end{align}
from Theorem~\ref{thm-S_r(f)}. In particular, we have
\begin{align}
    S_{t^k}(f)
    &=(-1)^k\varphi(t^k)\abs{t^k}^{\frac{n}{2}}
    =\begin{dcases}
        1&\textup{if }k=0,\\
    \frac{q-1}{q}\left(-q^{\frac{n}{2}+1}\right)^k&\textup{if }k\ge 1
    \end{dcases}\label{gl-keinQuadrat:S_tc}
\end{align}
for all $k\in\N_0$.

As we did in the previous case in Lemma~\ref{lem-Quadrat:1.Summe}, we begin by computing the first sum in~\eqref{gl-N(P)=1+2}.

\begin{lem}\label{lem-keinQuadrat:1.Summe}
Let $n$ be even, and let $(-1)^{\frac{n}{2}}a_1\cdots a_n$ not be a square in $\fq^\times$. For $n=4$ we have
\begin{align*}
    \sum_{\substack{r\in\fqt\\ \abs{r}\le q^{P-1}\\ r \textup{ monic}}}\frac{S_r(f)}{\abs{r}^n}\, I_r
    =\begin{dcases}
        q^{2P-1}&\textup{if }2\mid P,\\
        q^{2P+1}&\textup{if }2\nmid P,
    \end{dcases}
\end{align*}
and for $n\ge 6$ we have
\begin{align*}
    \sum_{\substack{r\in\fqt\\ \abs{r}\le q^{P-1}\\ r \textup{ monic}}}\frac{S_r(f)}{\abs{r}^n}\, I_r
    &=\frac{q^{\frac{n}{2}}+1}{q^{\frac{n}{2}-1}+q}\, q^{P(n-2)}-(-1)^P\frac{q^2-1}{q\left(1+q^{2-\frac{n}{2}}\right)} \, q^{\frac{n}{2}P}.
\end{align*}
\end{lem}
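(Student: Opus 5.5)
The plan is to follow the proof of Lemma~\ref{lem-Quadrat:1.Summe} step by step, now using \eqref{gl-keinQuadrat:S_r(f)} and \eqref{gl-keinQuadrat:S_tc} instead of \eqref{gl-Quadrat:S_r(f)} and \eqref{gl-Quadrat:S_tc}. The extra signs $(-1)^{\deg(r)}$ and $(-1)^k$ will be tracked throughout.

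\textbf{Computing $I_r$.} Fix a monic $r$ with $\rho\coloneq\deg(r)\le P-1$ and set $m\coloneq P-\rho-1$. By Lemma~\ref{lem-Integral=quasiSumGaußsummen} we need
\[
s_r\coloneq\sum_{k=0}^{m}q^{nk}S_{t^{m-k}}(f).
\]
Substitute $j=m-k$ and insert \eqref{gl-keinQuadrat:S_tc}. This gives
\[
s_r=q^{nm}\Bigl(1+\frac{q-1}{q}\sum_{j=1}^{m}\bigl(-q^{1-\frac n2}\bigr)^j\Bigr).
\]
The ratio $-q^{1-\frac n2}$ is never $1$. So, unlike the square case, no separate treatment of $n=4$ is needed at this step, and the geometric sum can be evaluated in closed form for all even $n\ge4$. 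Multiplying by $\abs{r}^n q^{n+1}q^{-2P}$ then gives
\[
I_r=A_n\,q^{P(n-2)}+B_n\,(-1)^{P-\rho-1}\,q^{P(\frac n2-1)+\rho(\frac n2-1)}
\]
with explicit constants $A_n,B_n$. I expect $A_n=\frac{q^{\frac n2}+1}{q^{\frac n2-1}+1}$, the analogue of the previous case with the signs flipped.

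\textbf{Summing over $r$.} Multiply by $S_r(f)/\abs{r}^n=(-1)^\rho\varphi(r)\abs{r}^{-\frac n2}$ and sum over monic $r$ with $\abs{r}\le q^{P-1}$.
\begin{itemize}
\item In the first term the sign $(-1)^\rho$ survives. Apply the alternating part of Lemma~\ref{lem-geometrischeReihe} with $c=\frac n2$ and $M=P-1$. For $n=4$ this is the case $c=2$, whose value is $1$ or $\frac1q$ according to the parity of $P-1$. This is where the parity dependence for $n=4$ comes from.
\item In the second term the signs combine to $(-1)^{P-1}$, independent of $r$. What remains is $\sum\varphi(r)/\abs{r}=q^{P-1}$, exactly as in \eqref{gl-geomsum:phi=qP-1}.
\end{itemize}

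\textbf{Simplification (main obstacle).} Collect the coefficients of $q^{P(n-2)}$ and $(-1)^Pq^{\frac n2P}$ and simplify them to the stated forms, in the same way as the computation ending Lemma~\ref{lem-Quadrat:1.Summe}.
\begin{itemize}
\item For $n\ge6$ the coefficient of $(-1)^Pq^{\frac n2P}$ should collapse to $-\frac{q^2-1}{q(1+q^{2-\frac n2})}$. This should work because $q^{\frac n2}-q^{\frac n2-1}+q-1$ factors through $q^{\frac n2-1}+1$, the sign-flipped counterpart of the previous factorisation.
\item For $n=4$ the $q^{2P}$ contributions from the two terms should combine to $q^{2P-1}$ when $P$ is even and $q^{2P+1}$ when $P$ is odd. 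I will check this directly using $A_4=\frac{q^2+1}{q+1}$.
\end{itemize}
I expect the main risk to be keeping the signs $(-1)^{P-\rho-1}$ and $(-1)^\rho$ consistent, so I would sanity-check the final formula for $P=1$, where only $r=1$ contributes.
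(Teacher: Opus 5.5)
Your proposal matches the paper's proof: your reindexing $j=m-k$ is only a rewriting of the paper's geometric sum with ratio $-q^{\frac n2-1}$, you get the same $A_n=\frac{q^{\frac n2}+1}{q^{\frac n2-1}+1}$ and $B_n=\frac{(q-1)q^{\frac n2-1}}{q^{\frac n2-1}+1}$, and you use the same sign bookkeeping, leading to the alternating $c=\frac n2$ sum and $\sum\varphi(r)/\abs{r}=q^{P-1}$. One small correction: the factorisation actually needed for $n\ge 6$ is $q^{\frac n2}+q^{\frac n2-1}+q+1=(q+1)(q^{\frac n2-1}+1)$, with all plus signs, because your $q^{\frac n2}-q^{\frac n2-1}+q-1=(q-1)(q^{\frac n2-1}+1)$ would give $(q-1)^2$ instead of $q^2-1$ in the numerator.
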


\begin{proof}
Let $r\in\fqt$ be monic with $\rho\coloneq \deg(r)\le P-1$. From~\eqref{gl-keinQuadrat:S_tc}, we get
\begin{align*}
    s_r\coloneq \sum_{k=0}^{P-\rho-1}q^{nk}S_{t^{P-\rho-k-1}}(f)
    &=\sum_{k=0}^{P-\rho-2} q^{nk}\cdot\frac{q-1}{q}\, \left(-q^{\frac{n}{2}+1}\right)^{P-\rho-k-1}+q^{n(P-\rho-1)}\\
    &=\frac{q-1}{q}\left(-q^{\frac{n}{2}+1}\right)^{P-\rho-1}\sum_{k=0}^{P-\rho-2}\left(-q^{\frac{n}{2}-1}\right)^k+q^{n(P-\rho-1)}.
\end{align*}
Since
\begin{align*}
    \sum_{k=0}^{P-\rho-2}\left(-q^{\frac{n}{2}-1}\right)^k 
    &= -\frac{\left(-q^{\frac{n}{2}-1}\right)^{P-\rho-1}-1}{q^{\frac{n}{2}-1}+1}
\end{align*}
for $n\ge 4$, it then follows
\begin{align*}
    s_r
    &=-\frac{q-1}{q}\cdot \frac{\left(-q^{\frac{n}{2}+1}\left(-q^{\frac{n}{2}-1}\right)\right)^{P-\rho-1}}{q^{\frac{n}{2}-1}+1}+\frac{q-1}{q}\cdot\frac{\left(-q^{\frac{n}{2}+1}\right)^{P-\rho-1}}{q^{\frac{n}{2}-1}+1} +q^{n(P-\rho-1)}\\
    &=\left(\frac{1}{q^n}-\frac{q-1}{q^{n+1}\left(q^{\frac{n}{2}-1}+1\right)}\right) q^{nP-n\rho} +\frac{(-1)^{P-\rho-1}(q-1)}{q^{\frac{n}{2}+2}\left(q^{\frac{n}{2}-1}+1\right)}\,q^{P\left(\frac{n}{2}+1\right)-\rho\left(\frac{n}{2}+1\right)}.
\end{align*}
As in the proof of Lemma~\ref{lem-Quadrat:1.Summe}, we can now multiply $s_r$ with $\frac{q^{n\rho+n+1}}{q^{2P}}$ such that we obtain
\begin{align*}
    I_r
    &=\left(q-\frac{q-1}{q^{\frac{n}{2}-1}+1}\right) q^{P(n-2)} +(-1)^{P-\rho-1}\,\frac{(q-1)q^{\frac{n}{2}-1}}{q^{\frac{n}{2}-1}+1}\, q^{P\left(\frac{n}{2}-1\right)+\rho\left(\frac{n}{2}-1\right)}\\
    &=\frac{q^{\frac{n}{2}}+1}{q^{\frac{n}{2}-1}+1}\, q^{P(n-2)} +(-1)^{P-\rho-1}\,\frac{(q-1)q^{\frac{n}{2}-1}}{q^{\frac{n}{2}-1}+1}\, q^{P\left(\frac{n}{2}-1\right)+\rho\left(\frac{n}{2}-1\right)}.
\end{align*}
Applying~\eqref{gl-keinQuadrat:S_r(f)} then yields
\begin{align}
    \sum_{\substack{r\in\fqt\\ \abs{r}\le q^{P-1}\\ r\textup{ monic}}}\frac{S_r(f)}{\abs{r}^n} \, I_r
    &=\sum_{\substack{r\in\fqt\\ \abs{r}\le q^{P-1}\\ r\textup{ monic}}} (-1)^{\deg(r)}\,\frac{\varphi(r)}{\abs{r}^{\frac{n}{2}}}\cdot \frac{q^{\frac{n}{2}}+1}{q^{\frac{n}{2}-1}+1}\, q^{P(n-2)} \notag\\
    &\quad +\sum_{\substack{r\in\fqt\\ \abs{r}\le q^{P-1}\\ r\textup{ monic}}} (-1)^{\deg(r)}\,\frac{\varphi(r)}{\abs{r}^{\frac{n}{2}}} \cdot  \frac{(-1)^{P-\deg(r)-1}(q-1)q^{\frac{n}{2}-1}}{q^{\frac{n}{2}-1}+1}\,q^{P\left(\frac{n}{2}-1\right)}\abs{r}^{\frac{n}{2}-1}\notag\\
    &= \frac{q^{\frac{n}{2}}+1}{q^{\frac{n}{2}-1}+1}\, q^{P(n-2)}\sum_{\substack{r\in\fqt\\ \abs{r}\le q^{P-1}\\ r\textup{ monic}}} (-1)^{\deg(r)}\,\frac{\varphi(r)}{\abs{r}^{\frac{n}{2}}}\notag \\
    &\quad +(-1)^{P-1}\,\frac{(q-1)q^{\frac{n}{2}-1}}{q^{\frac{n}{2}-1}+1}\, q^{P\left(\frac{n}{2}-1\right)} \sum_{\substack{r\in\fqt\\ \abs{r}\le q^{P-1}\\ r\textup{ monic}}}\frac{\varphi(r)}{\abs{r}}.\label{gl-keinQuadratSumS_r(f)I_r:Grad<=P-1}
\end{align}
According to Lemma~\ref{lem-geometrischeReihe}, we have
\begin{align*}
    \sum_{\substack{r\in\fqt\\ \abs{r}\le q^{P-1}\\ r\textup{ monic}}} (-1)^{\deg(r)}\,\frac{\varphi(r)}{\abs{r}^{\frac{n}{2}}}
    &=\begin{dcases}
        \frac{1}{q}&\textup{if }n=4\textup{ and }2\mid P,\\
        1&\textup{if }n=4\textup{ and }2\nmid P,
    \end{dcases}\end{align*}
    as well as
\begin{align*}
    \sum_{\substack{r\in\fqt\\ \abs{r}\le q^{P-1}\\ r\textup{ monic}}} (-1)^{\deg(r)}\,\frac{\varphi(r)}{\abs{r}^{\frac{n}{2}}}
    &=\frac{1+q^{1-\frac{n}{2}}}{1+q^{2-\frac{n}{2}}}+(-1)^{P-1}\,\frac{(q-1)q^{1-\frac{n}{2}}}{1+q^{2-\frac{n}{2}}}\,q^{(P-1)\left(2-\frac{n}{2}\right)}\\
    &=\frac{1+q^{1-\frac{n}{2}}}{1+q^{2-\frac{n}{2}}}-(-1)^P\,\frac{q-1}{q\left(1+q^{2-\frac{n}{2}}\right)}\,q^{P\left(2-\frac{n}{2}\right)} 
\end{align*}
for $n\ge 6$. Together with~\eqref{gl-geomsum:phi=qP-1}, inserting into~\eqref{gl-keinQuadratSumS_r(f)I_r:Grad<=P-1} gives
\begin{align*}
   \sum_{\substack{r\in\fqt\\ \abs{r}\le q^{P-1}\\ r\textup{ monic}}}\frac{S_r(f)}{\abs{r}^n} \, I_r
    &=\begin{dcases} \frac{q^{2}+1}{q+1}\cdot \frac{1}{q}\, q^{2P}-\frac{(q-1)q}{q+1}\, q^{2P-1}&\textup{if }2\mid P,\\
 \frac{q^{2}+1}{q+1}\, q^{2P}+\frac{(q-1)q}{q+1}\, q^{2P-1}&\textup{if }2\nmid P,
    \end{dcases}\\
    &=\begin{dcases} 
        \frac{1+q}{q(q+1)}\,q^{2P}=q^{2P-1}&\textup{if }2\mid P,\\
        \frac{q^2+q}{q+1}\,q^{2P}=q^{2P+1}&\textup{if }2\nmid P
    \end{dcases}
\end{align*}
for $n=4$, whereas if $n\ge 6$, we obtain
\begin{align*}
     \sum_{\substack{r\in\fqt\\ \abs{r}\le q^{P-1}\\ r\textup{ monic}}}\frac{S_r(f)}{\abs{r}^n} \, I_r
    =&\: \frac{q^{\frac{n}{2}}+1}{q^{\frac{n}{2}-1}+1}\, q^{P(n-2)}\left(\frac{1+q^{1-\frac{n}{2}}}{1+q^{2-\frac{n}{2}}}-(-1)^P\,\frac{q-1}{q\left(1+q^{2-\frac{n}{2}}\right)}q^{P\left(2-\frac{n}{2}\right)}\right)\\
    &+(-1)^{P-1}\,\frac{(q-1)q^{\frac{n}{2}-1}}{q^{\frac{n}{2}-1}+1}\, q^{P\left(\frac{n}{2}-1\right)}\,q^{P-1}\\
    =&\: \frac{q^{\frac{n}{2}}+1}{q^{\frac{n}{2}-1}\left(1+q^{2-\frac{n}{2}}\right)}\, q^{P(n-2)}-(-1)^{P}\frac{(q-1)}{q\left(q^{\frac{n}{2}-1}+1\right)}\left(\frac{q^{\frac{n}{2}}+1}{1+q^{2-\frac{n}{2}}}+q^{\frac{n}{2}-1}\right) q^{\frac{n}{2}P}.
\end{align*}
From
\begin{align*}
    \frac{(q-1)}{q\left(q^{\frac{n}{2}-1}+1\right)}\left(\frac{q^{\frac{n}{2}}+1}{1+q^{2-\frac{n}{2}}}+q^{\frac{n}{2}-1}\right)
    &=\frac{(q-1)\left(q^{\frac{n}{2}}+q^{\frac{n}{2}-1}+q+1\right)}{q\left(q^{\frac{n}{2}-1}+1\right)\left(1+q^{2-\frac{n}{2}}\right)}
    =\frac{(q-1)(q+1)}{q\left(1+q^{2-\frac{n}{2}}\right)},
\end{align*}
overall it then follows 
\begin{align*}
    \sum_{\substack{r\in\fqt\\ \abs{r}\le q^{P-1}\\ r\textup{ monic}}}\frac{S_r(f)}{\abs{r}^n} \, I_r
    &=\frac{q^{\frac{n}{2}}+1}{q^{\frac{n}{2}-1}+q}\, q^{P(n-2)}-(-1)^P\frac{q^2-1}{q\left(1+q^{2-\frac{n}{2}}\right)} \, q^{\frac{n}{2}P}
\end{align*}
for $n\ge 6$.
\end{proof}

We can now compute $N(P)$, as in Theorem~\ref{thm-Quadrat:N(P)} in the previous case.

\begin{thm}\label{thm-N(P)keinQuadrat}
Let $n$ be even, and let $(-1)^{\frac{n}{2}}a_1\cdots a_n$ not be a square in $\fq^\times$. For $n=4$ we have
\begin{align*}
   N(P)
   =\begin{dcases}
   q^{2P} &\textup{if }2\mid P,\\
   \frac{q^2-q+1}{q}\, q^{2P} &\textup{if }2\nmid P,
   \end{dcases}
\end{align*}
and for $n\ge 6$ we have
\begin{align*}
   N(P)
   &=\frac{q^{\frac{n}{2}}+1}{q^{\frac{n}{2}-1}+q} \, q^{P(n-2)}-(-1)^P(q-1)\frac{q^{\frac{n}{2}-1}-1}{q^{\frac{n}{2}-1}+q}\, q^{\frac{n}{2}P}.
\end{align*}
\end{thm}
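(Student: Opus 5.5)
The plan is to follow the proof of Theorem~\ref{thm-Quadrat:N(P)} exactly. Lemma~\ref{lem-keinQuadrat:1.Summe} already evaluates the first sum in~\eqref{gl-N(P)=1+2}, so it remains to evaluate the second sum, over monic $r$ with $\deg(r)=P$. By Lemma~\ref{lem-Integral=quasiSumGaußsummen}, $I_r=q^{P(n-2)}$ for all such $r$. By~\eqref{gl-keinQuadrat:S_r(f)}, $S_r(f)=(-1)^P\varphi(r)q^{\frac{n}{2}P}$, since the sign $(-1)^{\deg(r)}$ is constant on this range. Lemma~\ref{lem-sum:phi} then gives
\begin{align*}
\sum_{\substack{r\in\fqt\\ \abs{r}=q^P\\ r\textup{ monic}}}\frac{S_r(f)}{\abs{r}^n}\,I_r
=(-1)^P q^{P\left(\frac{n}{2}-2\right)}\cdot\frac{q-1}{q}\,q^{2P}
=(-1)^P\frac{q-1}{q}\,q^{\frac{n}{2}P}.
\end{align*}

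For $n=4$, I add this to the value from Lemma~\ref{lem-keinQuadrat:1.Summe} and split by the parity of $P$. If $P$ is even, the total is $q^{2P-1}+\frac{q-1}{q}q^{2P}=q^{2P}$. If $P$ is odd, it is $q^{2P+1}-\frac{q-1}{q}q^{2P}=\frac{q^2-q+1}{q}\,q^{2P}$. Both values agree with the statement.

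For $n\ge 6$, the main term $\frac{q^{\frac{n}{2}}+1}{q^{\frac{n}{2}-1}+q}q^{P(n-2)}$ comes directly from Lemma~\ref{lem-keinQuadrat:1.Summe}. The coefficient of $-(-1)^Pq^{\frac{n}{2}P}$ is
\[
\frac{q^2-1}{q\left(1+q^{2-\frac{n}{2}}\right)}-\frac{q-1}{q}
=\frac{q-1}{q}\left(\frac{q+1}{1+q^{2-\frac{n}{2}}}-1\right),
\]
which is the analogue of~\eqref{gl-brücheSumme} with the sign of $q^{2-\frac n2}$ flipped.

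The only step needing care is this coefficient simplification. The inner bracket equals $\frac{q-q^{2-\frac{n}{2}}}{1+q^{2-\frac{n}{2}}}$. Multiplying numerator and denominator by $q^{\frac{n}{2}-2}$ and cancelling the factor $q$ turns the whole coefficient into $(q-1)\frac{q^{\frac{n}{2}-1}-1}{q^{\frac{n}{2}-1}+q}$, as claimed. I do not expect a genuine obstacle; the main risk is sign bookkeeping for $(-1)^P$ and $(-1)^{P-1}$.
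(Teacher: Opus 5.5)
Your proposal is correct and follows the paper's proof essentially step for step. You evaluate the degree-$P$ part of~\eqref{gl-N(P)=1+2} as $(-1)^P\frac{q-1}{q}\,q^{\frac{n}{2}P}$ using $I_r=q^{P(n-2)}$, \eqref{gl-keinQuadrat:S_r(f)} and Lemma~\ref{lem-sum:phi}, add it to Lemma~\ref{lem-keinQuadrat:1.Summe}, and simplify the coefficient of $q^{\frac{n}{2}P}$ exactly as the paper does.
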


\begin{proof}
As in the proof of Theorem~\ref{thm-Quadrat:N(P)}, we compute the second sum in~\eqref{gl-N(P)=1+2}. From Lemma~\ref{lem-Integral=quasiSumGaußsummen} and~\eqref{gl-keinQuadrat:S_r(f)}, we get
\begin{align*}
    \sum_{\substack{r\in\fqt\\ \abs{r}=q^P\\ r\textup{ monic}}} \frac{S_r(f)}{\abs{r}^n}\,I_r
    &= q^{P(n-2)}\sum_{\substack{r\in\fqt\\ \abs{r}=q^P\\ r\textup{ monic}}} (-1)^P\,\frac{\varphi(r)\left(q^P\right)^{\frac{n}{2}}}{\left(q^P\right)^n}\\
    &=(-1)^Pq^{P\left(\frac{n}{2}-2\right)}\sum_{\substack{r\in\fqt\\ \abs{r}=q^P\\ r\textup{ monic}}}\varphi(r),
\end{align*}
and hence, applying Lemma~\ref{lem-sum:phi} gives
\begin{align*}
    \sum_{\substack{r\in\fqt\\ \abs{r}=q^\rho\\ r\textup{ monic}}} \frac{S_r(f)}{\abs{r}^n}\,I_r
    &= (-1)^P\,\frac{q-1}{q}\, q^{\frac{n}{2}P}.
\end{align*}
By inserting into~\eqref{gl-N(P)=1+2} and applying Lemma~\ref{lem-keinQuadrat:1.Summe}, we then obtain 
\begin{align*}
    N(P)&=
    \begin{dcases}
       q^{2P-1} + \frac{q-1}{q}\, q^{2P}=q^{2P}&\textup{if }2\mid P,\\
       q^{2P+1}-\frac{q-1}{q}\,q^{2P}=\frac{q^2-q+1}{q}\,q^{2P}&\textup{if }2\nmid P
    \end{dcases}
\end{align*}
for $n=4$, and
\begin{align*}
    N(P)&=\frac{q^{\frac{n}{2}}+1}{q^{\frac{n}{2}-1}+q}\, q^{P(n-2)}-(-1)^P\frac{q^2-1}{q\left(1+q^{2-\frac{n}{2}}\right)} \, q^{\frac{n}{2}P}+(-1)^P\,\frac{q-1}{q}\,q^{\frac{n}{2}P}
\end{align*}
for $n\ge 6$. Analogously to~\ref{gl-brücheSumme}, we can compute
\begin{align*}
    \frac{q^2-1}{q\left(1+q^{2-\frac{n}{2}}\right)} - \frac{q-1}{q}
    &=(q-1)\frac{q^{\frac{n}{2}-1}-1}{q^{\frac{n}{2}-1}+q},
\end{align*}
and hence, we have
\begin{align*}
    N(P)
    &=\frac{q^{\frac{n}{2}}+1}{q^{\frac{n}{2}-1}+q}\, q^{P(n-2)}-(-1)^P (q-1)\frac{q^{\frac{n}{2}-1}-1}{q^{\frac{n}{2}-1}+q}\,q^{\frac{n}{2}P}
\end{align*}
for $n\ge 6$.
\end{proof}

Using Theorem~\ref{thm-N(P)keinQuadrat}, we can now prove Theorem~\ref{thm-keinQUADRAT:mor}.

\begin{proof}[Proof of Theorem~\ref{thm-keinQUADRAT:mor}]
As in the proof of Theorem~\ref{thm-quadrat:mor}, we insert the results from Theorem~\ref{thm-N(P)keinQuadrat} into~\eqref{gl-mor_P}. We first assume $n=4$. If $P$ is even, $P-1$ and $P+1$ are odd, and we get
\begin{align*}
    \#\mor_P(\PP^1,X)(\fq)
    &=\frac{q^2-q+1}{q(q-1)}\,q^{2(P+1)}-\frac{q+1}{q-1}\,q^{2P}+\frac{q^2-q+1}{q-1}\,q^{2(P-1)}\\
    &=\frac{q^3\left(q^2-q+1\right)-q^2(q+1)+q^2-q+1}{q^2(q-1)}\,q^{2P}\\
    &=\frac{q^5-q^4-q+1}{q^2(q-1)}\,q^{2P}=\frac{q^4-1}{q^2}\,q^{2P}.
\end{align*}
Whereas if $P$ is odd, we get
\begin{align*}
    \#\mor_P(\PP^1,X)(\fq)
    &=\frac{1}{q-1}\,q^{2(P+1)}-\frac{(q+1)\left(q^2-q+1\right)}{q(q-1)}\,q^{2P}+\frac{q}{q-1}\, q^{2(P-1)}\\
    &=\frac{q^3-(q+1)\left(q^2-q+1\right)+1}{q(q-1)}\,q^{2P}=0.
\end{align*}

Suppose now that $n\ge 6$. Similarly to the computation in the proof of Theorem~\ref{thm-quadrat:mor}, we obtain
\begin{align*}
    \frac{N(P+1)}{q-1}
    &=\frac{\left(q^{\frac{n}{2}}+1\right)q^{n-2}}{\left(q^{\frac{n}{2}-1}+q\right)(q-1)}\, q^{P(n-2)}+(-1)^P \frac{\left(q^{\frac{n}{2}-1}-1\right)q^{\frac{n}{2}}}{q^{\frac{n}{2}-1}+q}\,q^{\frac{n}{2}P},\\
    -\frac{(q+1)N(P)}{q-1}
    &=-\frac{\left(q^{\frac{n}{2}}+1\right)(q+1)}{\left(q^{\frac{n}{2}-1}+q\right)(q-1)}\, q^{P(n-2)}+(-1)^P \frac{\left(q^{\frac{n}{2}-1}-1\right)(q+1)}{q^{\frac{n}{2}-1}+q}\,q^{\frac{n}{2}P}\\
\intertext{and}
    \frac{qN(P-1)}{q-1}
    &=\frac{\left(q^{\frac{n}{2}}+1\right)q^{3-n}}{\left(q^{\frac{n}{2}-1}+q\right)(q-1)}\, q^{P(n-2)}+(-1)^P \frac{\left(q^{\frac{n}{2}-1}-1\right)q^{1-\frac{n}{2}}}{q^{\frac{n}{2}-1}+q}\,q^{\frac{n}{2}P},
\end{align*}
and hence, $\#\mor_P(\PP^1,X)(\fq)$ adds up to
\begin{align*}
    &\:\frac{\left(q^{\frac{n}{2}}+1\right)\left(q^{n-2}-q-1+q^{3-n}\right)}{\left(q^{\frac{n}{2}-1}+q\right)(q-1)}\, q^{P(n-2)}
    +(-1)^P \frac{\left(q^{\frac{n}{2}-1}-1\right)\left(q^{\frac{n}{2}}+q+1+q^{1-\frac{n}{2}}\right)}{q^{\frac{n}{2}-1}+q}\,q^{\frac{n}{2}P}\\
    =&\:\frac{\left(q^{\frac{n}{2}}+1\right)\left(q^{n-2}-1\right)\left(1-q^{3-n}\right)}{\left(q^{\frac{n}{2}-1}+q\right)(q-1)}\, q^{P(n-2)}
    +(-1)^P \frac{\left(q^{n-1}-q\right)\left(1+q^{-\frac{n}{2}}\right)}{q^{\frac{n}{2}-1}+q}\,q^{\frac{n}{2}P}\\
    =&\:\frac{\left(q^{\frac{n}{2}}+1\right)\left(q^{n-2}-1\right)\left(q^{n-3}-1\right)}{q^{n-2}\left(q^{\frac{n}{2}-2}+1\right)(q-1)}\, q^{P(n-2)}+(-1)^P \frac{\left(q^{n-2}-1\right)\left(q^{\frac{n}{2}}+1\right)}{q^{\frac{n}{2}}\left(q^{\frac{n}{2}-2}+1\right)}\,q^{\frac{n}{2}P}.
\end{align*}
\end{proof}

\subsection{Computation of \texorpdfstring{$N(P)$}{N(P)} if \texorpdfstring{$2\nmid n$}{2!|n}}
Lastly, we consider the case where $n$ is odd. Let $r\in\fqt$ be monic. According to Theorem~\ref{thm-S_r(f)}, we then have
\[
    S_r(f)=\varphi(r)\abs{r}^{\frac{n}{2}}
\]
if $r$ is a square in $\fqt$, and $S_r(f)=0$ otherwise. Since it follows from~{\cite[Proposition~1.7]{rosen2002}} that $\varphi(r^2)=\varphi(r)\abs{r}$, we can also write
\begin{align}
    S_{r^2}(f)
    &=\varphi(r^2)\abs{r^2}^{\frac{n}{2}}=\varphi(r)\abs{r}^{n+1}.\label{gl-nUNGERADE:Sr2(f)}
\end{align}
In particular, for all $k\in\N_0$ we have
\begin{align}
    S_{t^k}(f)
    &=\begin{dcases}
        1&\textup{if }k=0,\\
        \varphi(t^k)\abs{t^k}^{\frac{n}{2}}=\frac{q-1}{q}\,q^{k\left(\frac{n}{2}+1\right)}&\textup{if }k\ge 1\textup{ and }2\mid k,\\
        0&\textup{else}
    \end{dcases}\label{gl-nUNGERADE:S_tc}
\end{align}
because $t^k$ is a square in $\fqt$ if and only if $k$ is even.

As in both previous cases, where $n$ was even, we start by computing the first sum in~\eqref{gl-N(P)=1+2}.

\begin{lem}\label{lem-nUNGERADE:1.Summe}
Let $n$ be odd. For $n=3$ we have
\begin{align*}
    \sum_{\substack{r\in\fqt\\ \abs{r}\le q^{P-1}\\ r \textup{ monic}}}\frac{S_r(f)}{\abs{r}^n}\, I_r
    &=\begin{dcases}
           \frac{q^2-1}{2q}P q^P+\frac{1}{q}\,q^{P} &\textup{if }2\mid P,\\
    \frac{q^2-1}{2q}P q^P+\frac{q^2+1}{2q}\,q^{P}&\textup{if }2\nmid P,
    \end{dcases}
\end{align*}
and for $n\ge 5$ we have
\begin{align*}
    \sum_{\substack{r\in\fqt\\ \abs{r}\le q^{P-1}\\ r \textup{ monic}}}\frac{S_r(f)}{\abs{r}^n}\, I_r=
\begin{dcases} 
    \frac{q-q^{2-n}}{1-q^{3-n}}\,q^{P(n-2)} -\frac{\left(q^2-1\right)q^{n-3}}{q^{n-2}-q}\, q^{\frac{n-1}{2}P} &\textup{if }2\mid P,\\
    \frac{q-q^{2-n}}{1-q^{3-n}}\,q^{P(n-2)} -\frac{\left(q^2-1\right)q^{\frac{n-3}{2}}}{q^{n-2}-q}\, q^{\frac{n-1}{2}P}&\textup{if }2\nmid P.
    \end{dcases}
\end{align*}
\end{lem}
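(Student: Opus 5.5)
Following the template of Lemmas~\ref{lem-Quadrat:1.Summe} and~\ref{lem-keinQuadrat:1.Summe}, I first compute $I_r$ in closed form for each monic $r$ with $\rho\coloneq\deg(r)\le P-1$, using Lemma~\ref{lem-Integral=quasiSumGaußsummen} together with~\eqref{gl-nUNGERADE:S_tc}. Put $m\coloneq P-\rho-1$. Then
\[
s_r\coloneq\sum_{k=0}^{m}q^{nk}S_{t^{m-k}}(f)=q^{nm}+\frac{q-1}{q}\sum_{\substack{0\le k\le m-1\\ m-k\text{ even}}}q^{nk}q^{(m-k)\left(\frac n2+1\right)}.
\]
Writing $m-k=2j$ with $1\le j\le\lfloor m/2\rfloor$, the inner sum becomes $q^{nm}\sum_{j}q^{-2j(\frac n2-1)}=q^{nm}\sum_j q^{j(2-n)}$. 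This is a geometric series with ratio $q^{2-n}$, and its length $\lfloor m/2\rfloor$ depends on the parity of $m$, that is, on the parity of $P-\rho$. So $I_r=\frac{q^{n\rho+n+1}}{q^{2P}}s_r$ is a constant multiple of $q^{P(n-2)}$ minus a term of the form $c_\pm\, q^{nm}q^{-\lfloor m/2\rfloor(n-2)}\cdot q^{n\rho+n+1-2P}$. The sign choice $\pm$ records whether $P-\rho$ is odd or even.

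Next, by Theorem~\ref{thm-S_r(f)} only square $r$ contribute, so I write $r=u^2$ with $u$ monic and $\deg(u)=\sigma$, where $2\sigma\le P-1$. By~\eqref{gl-nUNGERADE:Sr2(f)} we have $S_r(f)/\abs{r}^n=\varphi(u)\abs{u}^{1-n}$. Since $\rho=2\sigma$ is even, the parity of $m=P-2\sigma-1$ is the parity of $P-1$ and is the same for every $r$. This explains why the answer splits according to the parity of $P$ and not according to $\rho$. The first sum then becomes
\[
A\,q^{P(n-2)}\sum_{\abs{u}\le q^{\lfloor (P-1)/2\rfloor}}\frac{\varphi(u)}{\abs{u}^{n-1}}\;-\;B\,q^{\alpha P}\sum_{\abs{u}\le q^{\lfloor (P-1)/2\rfloor}}\frac{\varphi(u)}{\abs{u}^{c}}.
\]
Here $c$ comes from tracking the exponent of $q^{\rho}$ in the secondary piece. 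I expect $c=1$, in analogy with~\eqref{gl-geomsum:phi=qP-1}. The constants $A,B,\alpha$ depend on the parity of $P$. Both sums are then evaluated by Lemma~\ref{lem-geometrischeReihe} with $M=\lfloor (P-1)/2\rfloor$. For $n=3$ the first sum has $c=n-1=2$, which produces the linear term $\frac{q-1}{q}M$. This is the source of the $\frac{q^2-1}{2q}Pq^P$ term, since $M\approx P/2$. For $n\ge5$ the formula with $c\ne2$ applies. Its tail $q^{M(2-c)}$ combines with $q^{P(n-2)}$ to give order $q^{P(n-2)-M(n-3)}$, which is $q^{\frac{n-1}{2}P}$ up to a parity-dependent shift. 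That shift accounts for the different factors $q^{n-3}$ versus $q^{\frac{n-3}{2}}$ in the two cases.

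The main obstacle is the bookkeeping of floors. I will handle it by treating $P=2L$ and $P=2L+1$ completely separately from the start, with $M=L-1$ and $M=L$ respectively, and $m$ odd or even accordingly. Then every geometric sum has an explicit endpoint, and I combine the $q^{\frac{n-1}{2}P}$ contributions from the two pieces. As a consistency check I will make sure that for $n=3$ the exponent $\frac{n-1}{2}P$ coincides with $P(n-2)=P$, so that the two pieces merge into the stated constant terms. The final step is simplifying rational coefficients such as $\frac{q-q^{2-n}}{1-q^{3-n}}$, analogously to~\eqref{gl-brücheSumme}.
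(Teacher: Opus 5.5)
Your proposal is correct and follows the paper's proof essentially step by step. Like the paper, you sum the geometric series with ratio $q^{2-n}$ and length $\floor{\frac{P-\rho-1}{2}}$ to get $I_r$, restrict to $r=u^2$ so that this floor becomes $\floor{\frac{P-1}{2}}-\deg(u)$, and evaluate the two resulting sums (with $c=n-1$ and $c=1$) by Lemma~\ref{lem-geometrischeReihe} with $M=\floor{\frac{P-1}{2}}$. Your expectation $c=1$ is right: the factor $q^{(2-n)\floor{\frac{P-1-2\deg(u)}{2}}}=q^{(2-n)\floor{\frac{P-1}{2}}}\abs{u}^{n-2}$ turns $\varphi(u)\abs{u}^{1-n}$ into $\varphi(u)\abs{u}^{-1}$, whose sum is $q^{M}$.
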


\begin{proof}
Let $r\in\fqt$ be monic with $\rho\coloneq \deg(r)\le P-1$. Since $S_{t^k}(f)=0$ for odd $k\in\N$ according to~\eqref{gl-nUNGERADE:S_tc}, we can write
\begin{align*}
    s_r\coloneq \sum_{k=0}^{P-\rho-1}q^{nk}S_{t^{P-\rho-k-1}}(f)
    &=\sum_{k=0}^{P-\rho-1}q^{n(P-\rho-k-1)}\,S_{t^k}(f)
    =\sum_{k=0}^{\floor{\frac{P-\rho-1}{2}} }q^{n(P-\rho-2k-1)}\,S_{t^{2k}}(f),
\end{align*}
and hence, it follows from~\eqref{gl-nUNGERADE:S_tc} that
\begin{align*}
    s_r&=\sum_{k=1}^{\floor{\frac{P-\rho-1}{2}} } q^{n(P-\rho-2k-1)}\,\frac{q-1}{q}\, q^{2k\left(\frac{n}{2}+1\right)}+q^{n(P-\rho-1)}\\
    &=\frac{q-1}{q^{n+1}}\, q^{nP-n\rho}\sum_{k=1}^{\floor{\frac{P-\rho-1}{2}}}\left(q^{2-n}\right)^k+\frac{1}{q^n}\,q^{nP-n\rho}.
\end{align*}
As $n\ge 3$, we have
\begin{align*}
    \sum_{k=1}^{\floor{\frac{P-\rho-1}{2}} }\left(q^{2-n}\right)^k 
    &= q^{2-n}\,\frac{1-\left(q^{2-n}\right)^{\floor{\frac{P-\rho-1}{2}}}}{1-q^{2-n}},
\end{align*}
which yields
\begin{align*}
    s_r
    &=\left(\frac{(q-1)q^{1-2n}}{1-q^{2-n}}+\frac{1}{q^n}\right) q^{n P-n\rho} - \frac{(q-1)q^{1-2n}}{1-q^{2-n}}\, q^{nP-n\rho+(2-n)\floor{\frac{P-\rho-1}{2}}}.
\end{align*}
It then follows
\begin{align*}
    I_r
    =\frac{q^{n\rho+n+1}}{q^{2P}}\cdot s_r
    &=\left(\frac{(q-1)q^{2-n}}{1-q^{2-n}}+q\right)q^{P(n-2)}- \frac{(q-1)q^{2-n}}{1-q^{2-n}}\,q^{P(n-2)+(2-n)\floor{\frac{P-\rho-1}{2}}} \\
    &=\frac{q-q^{2-n}}{1-q^{2-n}}\,q^{P(n-2)}- \frac{(q-1)q^{2-n}}{1-q^{2-n}}\,q^{P(n-2)+(2-n)\floor{\frac{P-\rho-1}{2}}}.
\end{align*}
To compute 
\[
    \sum_{\substack{r\in\fqt\\ \abs{r}\le q^{P-1}\\ r\textup{ monic}}}\frac{S_r(f)}{\abs{r}^n} \, I_r,
\]
we notice that as per~\eqref{gl-Quadrat:S_r(f)}, we only need to sum over all elements of the form $r^2\in\fqt$ for some monic $r\in\fqt$ with 
\[
    \abs{r}\le \left(q^{P-1}\right)^{\frac{1}{2}}=q^{\frac{P-1}{2}},
\]
i.e. $\abs{r}\le q^{\floor{\frac{P-1}{2}}}$, since $\deg(r)\in\N_0$. Thus, inserting~\eqref{gl-nUNGERADE:Sr2(f)} yields
\begin{align*}
    \sum_{\substack{r\in\fqt\\ \abs{r}\le q^{P-1}\\ r\textup{ monic}}}\frac{S_r(f)}{\abs{r}^n} \, I_r
    &=\sum_{\substack{r\in\fqt\\ \abs{r}\le q^{\floor{\frac{P-1}{2}}}\\ r\textup{ monic}}} \frac{\varphi(r)\abs{r}^{n+1}}{\abs{r^2}^n}\cdot \frac{q-q^{2-n}}{1-q^{2-n}}\,q^{P(n-2)}\\
    &\quad- \sum_{\substack{r\in\fqt\\ \abs{r}\le q^{\floor{\frac{P-1}{2}}}\\ r\textup{ monic}}} \frac{\varphi(r)\abs{r}^{n+1}}{\abs{r^2}^n}\cdot \frac{(q-1)q^{2-n}}{1-q^{2-n}}\,q^{P(n-2)+(2-n)\floor{\frac{P-\deg(r^2)-1}{2}}}.
\end{align*}
From
\begin{align*}
    q^{(2-n)\floor{\frac{P-\deg(r^2)-1}{2}}}
    &=q^{(2-n)\floor{\frac{P-1}{2}} -(2-n)\deg(r)}
    =q^{(2-n)\floor{\frac{P-1}{2}}}\abs{r}^{n-2}
\end{align*}
for all monic $r\in\fqt$, we then get
\begin{align}
    \sum_{\substack{r\in\fqt\\ \abs{r}\le q^{P-1}\\ r\textup{ monic}}}\frac{S_r(f)}{\abs{r}^n} \, I_r
    &=\frac{q-q^{2-n}}{1-q^{2-n}}\,q^{P(n-2)} \sum_{\substack{r\in\fqt\\ \abs{r}\le q^{\floor{\frac{P-1}{2}}}\\ r\textup{ monic}}} \frac{\varphi(r)}{\abs{r}^{n-1}}\notag\\
    &\quad- \frac{(q-1)q^{2-n}}{1-q^{2-n}}\,q^{P(n-2)+(2-n)\floor{\frac{P-1}{2}}}\sum_{\substack{r\in\fqt\\ \abs{r}\le q^{\floor{\frac{P-1}{2}}}\\ r\textup{ monic}}} \frac{\varphi(r)}{\abs{r}}.\label{gl-nUNGERADE:vormEinsetzen}
\end{align}
According to Lemma~\ref{lem-geometrischeReihe}, we have
\begin{align*}
    \sum_{\substack{r\in\fqt\\ \abs{r}\le q^{\floor{\frac{P-1}{2}}}\\ r\textup{ monic}}}\frac{\varphi(r)}{\abs{r}}
    &=-\frac{q-1}{1-q}\, q^{\floor{\frac{P-1}{2}}}
    =q^{\floor{\frac{P-1}{2}}}
\end{align*}
and
\begin{align*}
    \sum_{\substack{r\in\fqt\\ \abs{r}\le q^{\floor{\frac{P-1}{2}}}\\ r\textup{ monic}}} \frac{\varphi(r)}{\abs{r}^{n-1}}
    &=\begin{dcases}
        1+\frac{q-1}{q} \floor{\frac{P-1}{2}} &\textup{if }n=3,\\
        \frac{1-q^{2-n}}{1-q^{3-n}}-\frac{(q-1)q^{2-n}}{1-q^{3-n}}\,q^{\floor{\frac{P-1}{2}} \left(3-n\right)} &\textup{if }n\ge 5.
    \end{dcases}
\end{align*}
For $n=3$ we then obtain
\begin{align*}
    \sum_{\substack{r\in\fqt\\ \abs{r}\le q^{P-1}\\ r\textup{ monic}}}\frac{S_r(f)}{\abs{r}^n} \, I_r
    &=\frac{q-q^{-1}}{1-q^{-1}}\,q^{P} \left(1+\frac{q-1}{q}\floor{\frac{P-1}{2}}\right)- \frac{(q-1)q^{-1}}{1-q^{-1}}\,q^{P-\floor{\frac{P-1}{2}}}q^{\floor{\frac{P-1}{2}}}\\
     &=\left(q+1\right)\,q^{P} \left(1+\frac{q-1}{q}\floor{\frac{P-1}{2}} \right) - q^{P}\\
     &=\frac{q^2-1}{q}\floor{\frac{P-1}{2}} q^P+ q\cdot q^{P}
\end{align*}
by inserting into~\eqref{gl-nUNGERADE:vormEinsetzen}, and thus
\begin{align*}
    \sum_{\substack{r\in\fqt\\ \abs{r}\le q^{P-1}\\ r\textup{ monic}}}\frac{S_r(f)}{\abs{r}^n} \, I_r
    &=\begin{dcases} 
    \frac{q^2-1}{2q}(P-2) q^P+q\cdot q^{P} &\textup{if }2\mid P,\\
    \frac{q^2-1}{2q}(P-1) q^P+q\cdot q^{P} &\textup{if }2\nmid P
    \end{dcases}\\
    &=\begin{dcases}
    \frac{q^2-1}{2q}P q^P+\frac{1}{q}\,q^{P} &\textup{if }2\mid P,\\
    \frac{q^2-1}{2q}P q^P+\frac{q^2+1}{2q}\,q^{P}&\textup{if }2\nmid P.
    \end{dcases}
\end{align*}

Now we assume $n\ge 5$. Then, inserting into~\eqref{gl-nUNGERADE:vormEinsetzen} yields
\begin{align*}
      \sum_{\substack{r\in\fqt\\ \abs{r}\le q^{P-1}\\ r\textup{ monic}}}\frac{S_r(f)}{\abs{r}^n} \, I_r
    &=\frac{q-q^{2-n}}{1-q^{2-n}}\,q^{P(n-2)} \left(\frac{1-q^{2-n}}{1-q^{3-n}}-\frac{(q-1)q^{2-n}q^{\floor{\frac{P-1}{2}} (3-n)}}{1-q^{3-n}}\right)\notag\\
    &\quad- \frac{(q-1)q^{2-n}}{1-q^{2-n}}\,q^{P(n-2)+(2-n)\floor{\frac{P-1}{2}}} q^{\floor{\frac{P-1}{2}}}\\
    &=\frac{q-q^{2-n}}{1-q^{3-n}}\,q^{P(n-2)} -\frac{(q-1)q^{2-n}\left(q-q^{2-n}\right)}{\left(1-q^{2-n}\right)\left(1-q^{3-n}\right)}\,q^{P(n-2)+\floor{\frac{P-1}{2}} (3-n)}\notag\\
    &\quad- \frac{(q-1)q^{2-n}}{1-q^{2-n}}\,q^{P(n-2)+(3-n)\floor{\frac{P-1}{2}}}.
\end{align*}
Since we can compute
\begin{align*}
    &\:\frac{(q-1)q^{2-n}\left(q-q^{2-n}\right)}{\left(1-q^{2-n}\right)\left(1-q^{3-n}\right)} + \frac{(q-1)q^{2-n}}{1-q^{2-n}}\\
    =&\:\frac{(q-1)q^{2-n}}{1-q^{2-n}}\left(\frac{q-q^{2-n}}{1-q^{3-n}}+1\right)\\
    =&\:\frac{(q-1)q^{2-n}}{1-q^{2-n}}\cdot \frac{1+q-q^{2-n}-q^{3-n}}{1-q^{3-n}}\\
    =&\:(q-1)q^{2-n}\,\frac{q+1}{1-q^{3-n}}
    =\frac{q^2-1}{q^{n-2}-q},
\end{align*}
it follows from
\begin{align*}
    P(n-2)+\floor{\frac{P-1}{2}}(3-n)
    &=\begin{dcases}
        \frac{n-1}{2}P+n-3&\textup{if }2\mid P,\\
        \frac{n-1}{2}P+\frac{n-3}{2}&\textup{if }2\nmid P
    \end{dcases}
\end{align*}
that the coefficient of $q^{\frac{n-1}{2}P}$ is 
\[
    \begin{dcases} 
    -\frac{q^2-1}{q^{n-2}-q}\, q^{n-3}&\textup{if }2\mid P,\\
    -\frac{q^2-1}{q^{n-2}-q}\,q^{\frac{n-3}{2}}&\textup{if }2\nmid P.
    \end{dcases}
\]
In total, we then obtain
\begin{align*}
    \sum_{\substack{r\in\fqt\\ \abs{r}\le q^{P-1}\\ r\textup{ monic}}}\frac{S_r(f)}{\abs{r}^n} \, I_r
    &=\begin{dcases} 
    \frac{q-q^{2-n}}{1-q^{3-n}}\,q^{P(n-2)} -\frac{\left(q^2-1\right)q^{n-3}}{q^{n-2}-q}\, q^{\frac{n-1}{2}P} &\textup{if }2\mid P,\\
    \frac{q-q^{2-n}}{1-q^{3-n}}\,q^{P(n-2)} -\frac{\left(q^2-1\right)q^{\frac{n-3}{2}}}{q^{n-2}-q}\, q^{\frac{n-1}{2}P}&\textup{if }2\nmid P.
    \end{dcases}
\end{align*}  
\end{proof}

This then yields a formula for $N(P)$ for odd $n\ge 3$.

\begin{thm}\label{thm-nUngerade:N(P)}
Let $n$ be odd. For $n=3$ we have
\begin{align*}
   N(P)
   &=\begin{dcases}
   \frac{q^2-1}{2q} Pq^P+q^P &\textup{if }2\mid P,\\
   \frac{q^2-1}{2q}Pq^P+\frac{q^2+1}{2q}\, q^P &\textup{if }2\nmid P,
   \end{dcases}
\end{align*}
and for $n\ge 5$ we have
\begin{align*}
   N(P)&=
   \begin{dcases}
       \frac{q^{n-1}-1}{q^{n-2}-q} \, q^{P(n-2)}-(q-1)\frac{q^{n-2}+1}{q^{n-2}-q}\, q^{\frac{n-1}{2}P}&\textup{if }2\mid P,\\
       \frac{q^{n-1}-1}{q^{n-2}-q} \, q^{P(n-2)}-\frac{\left(q^2-1\right)q^{\frac{n-3}{2}}}{q^{n-2}-q}\, q^{\frac{n-1}{2}P}&\textup{if }2\nmid P.
   \end{dcases}
\end{align*}
\end{thm}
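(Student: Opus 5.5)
Following the pattern of Theorems~\ref{thm-Quadrat:N(P)} and~\ref{thm-N(P)keinQuadrat}, we split $N(P)$ as in~\eqref{gl-N(P)=1+2}. The first sum, over monic $r$ with $\abs{r}\le q^{P-1}$, is supplied by Lemma~\ref{lem-nUNGERADE:1.Summe}. So it only remains to evaluate the second sum, over monic $r$ with $\deg(r)=P$, and to add the two contributions.

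For the second sum, Lemma~\ref{lem-Integral=quasiSumGaußsummen} gives $I_r=q^{P(n-2)}$. By Theorem~\ref{thm-S_r(f)}, $S_r(f)$ vanishes unless $r$ is a square. Hence, if $P$ is odd, no monic $r$ of degree $P$ contributes and the second sum is $0$. If $P$ is even, we write $r=u^2$ with $u$ monic of degree $P/2$ and use~\eqref{gl-nUNGERADE:Sr2(f)}, $S_{u^2}(f)=\varphi(u)\abs{u}^{n+1}$. Then
\[
\sum_{\substack{\deg(u)=P/2\\ u\textup{ monic}}}\frac{\varphi(u)q^{\frac{P}{2}(n+1)}}{q^{Pn}}\,q^{P(n-2)}=q^{\frac{P}{2}(n+1)-2P}\cdot\frac{q-1}{q}\,q^{P}=\frac{q-1}{q}\,q^{\frac{n-1}{2}P},
\]
where the middle step uses Lemma~\ref{lem-sum:phi}. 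This requires $P/2\ge1$, which holds since $P\ge 2$ is even.

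It remains to combine the two parts.
\begin{itemize}
\item For $n=3$ and $P$ odd, the second sum is $0$, so the odd formula of Lemma~\ref{lem-nUNGERADE:1.Summe} is already the claim.
\item For $n=3$ and $P$ even, we add $\frac{q-1}{q}q^P$ to $\frac1q q^P$, which gives $q^P$.
\item For $n\ge5$, we first rewrite the main coefficient: $\frac{q-q^{2-n}}{1-q^{3-n}}=\frac{q^{n-1}-1}{q^{n-2}-q}$, obtained by multiplying numerator and denominator by $q^{n-2}$.
\item For $n\ge5$ and $P$ odd, the second sum vanishes, so the secondary term is unchanged.
\item For $n\ge5$ and $P$ even, the coefficient of $q^{\frac{n-1}{2}P}$ becomes
\[
-\frac{(q^2-1)q^{n-3}}{q^{n-2}-q}+\frac{q-1}{q}=-(q-1)\,\frac{(q+1)q^{n-2}-q^{n-2}+q}{q(q^{n-2}-q)}=-(q-1)\frac{q^{n-2}+1}{q^{n-2}-q}.
\]
\end{itemize}
This is the stated formula.

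The only step needing care is the parity bookkeeping in the degree-$P$ sum: the square condition kills it for odd $P$, and the reparametrisation $r=u^2$ introduces the factor $\abs{u}^{n+1}$. Everything else is a routine simplification of rational expressions in $q$.
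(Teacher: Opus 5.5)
Your proposal is correct and follows the paper's proof: you split $N(P)$ via~\eqref{gl-N(P)=1+2}, take the first sum from Lemma~\ref{lem-nUNGERADE:1.Summe}, kill the degree-$P$ sum for odd $P$ by the square condition, and for even $P$ reparametrise $r=u^2$ and use Lemma~\ref{lem-sum:phi} to get $\frac{q-1}{q}\,q^{\frac{n-1}{2}P}$. Your final simplifications all check out, including the rewriting $\frac{q-q^{2-n}}{1-q^{3-n}}=\frac{q^{n-1}-1}{q^{n-2}-q}$ and the secondary coefficient $-(q-1)\frac{q^{n-2}+1}{q^{n-2}-q}$ for even $P$.
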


\begin{proof}
As in the proof of Theorem~\ref{thm-Quadrat:N(P)} and Theorem~\ref{thm-nUngerade:N(P)}, we only need to apply Lemma~\ref{lem-nUNGERADE:1.Summe} and compute the second sum in~\eqref{gl-N(P)=1+2}. Since squares in $\fqt$ are of even degree, we have $S_r(f)=0$ for all monic $r\in\fqt$ with $\deg(r)=P$ if $P$ is odd, as per~\eqref{gl-nUNGERADE:Sr2(f)}. Hence, in this case it follows directly from Lemma~\ref{lem-nUNGERADE:1.Summe} that
\begin{align*}
    N(P)
    &=\sum_{\substack{r\in\fqt\\ \abs{r}\le q^{P-1}\\ r\textup{ monic}}} \frac{S_r(f)}{\abs{r}^n}\, I_r
    =\begin{dcases}
        \frac{q^2-1}{2q}\,P q^P+\frac{q^2+1}{2q}\,q^P&\textup{if }n=3,\\
        \frac{q^{n-1}-1}{q^{n-2}-q}\,q^{P(n-2)} -\frac{\left(q^2-1\right)q^{\frac{n-3}{2}}}{q^{n-2}-q}\, q^{\frac{n-1}{2}P}&\textup{if }n\ge 5.
    \end{dcases}
\end{align*}

We now assume that $P$ is even. According to~\eqref{gl-nUNGERADE:Sr2(f)}, we then have
\begin{align*}
     \sum_{\substack{r\in\fqt\\ \abs{r}=q^P\\ r\textup{ monic}}} \frac{S_r(f)}{\abs{r}^n}\, I_r
    &= q^{P(n-2)} \sum_{\substack{r\in\fqt\\ \abs{r}=q^{\frac{P}{2}}\\ r\textup{ monic}}} \frac{S_{r^2}(f)}{\abs{r^2}^n}\\
    &=q^{P(n-2)} \sum_{\substack{r\in\fqt\\ \abs{r}=q^{\frac{P}{2}}\\ r\textup{ monic}}} \frac{\varphi(r)\!\left(q^{\frac{P}{2}}\right)^{n+1}}{\left(q^{\frac{P}{2}}\right)^{2n}}\\
    &=q^{\frac{n-3}{2}P} \sum_{\substack{r\in\fqt\\ \abs{r}=q^{\frac{P}{2}}\\ r\textup{ monic}}}\varphi(r). 
\end{align*}
From Lemma~\ref{lem-sum:phi}, we therefore get
\begin{align*}
    \sum_{\substack{r\in\fqt\\ \abs{r}=q^P\\ r\textup{ monic}}} \frac{S_r(f)}{\abs{r}^n}\, I_r
    &=q^{\frac{n-3}{2}P}\cdot \frac{q-1}{q}\left(q^{\frac{P}{2}}\right)^2=\frac{q-1}{q}\,q^{\frac{n-1}{2}P},
\end{align*}
which we can insert into~\eqref{gl-N(P)=1+2}. By applying Lemma~\ref{lem-nUNGERADE:1.Summe}, we can then compute $N(P)$. This gives
\begin{align*}
    N(P)
    &=\frac{q^2-1}{2q}\,P q^P+\frac{1}{q}\,q^{P} +\frac{q-1}{q}\, q^{P}
    =\frac{q^2-1}{2q}\,P q^P+q^{P}
\end{align*}
for $n=3$, and by computing
\begin{align*}
    \frac{(q^2-1)q^{n-3}}{q^{n-2}-q}-\frac{q-1}{q}
    &=(q-1)\frac{(q+1)q^{n-3}-\left(q^{n-3}-1\right)}{q^{n-2}-q}
    =(q-1)\frac{q^{n-2}+1}{q^{n-2}-q},
\end{align*}
we obtain
\begin{align*}
    N(P)
    &= \frac{q^{n-1}-1}{q^{n-2}-q}\,q^{P(n-2)} - \frac{(q^2-1)q^{n-3}}{q^{n-2}-q}\, q^{\frac{n-1}{2}P}+  \frac{q-1}{q}\, q^{\frac{n-1}{2}P}\\
    &=\frac{q^{n-1}-1}{q^{n-2}-q}\,q^{P(n-2)} - (q-1)\frac{q^{n-2}+1}{q^{n-2}-q}\, q^{\frac{n-1}{2}P}
\end{align*}
for $n\ge 5$.
\end{proof}

With this, we can finally prove Theorem~\ref{thm-nUNGERADE:mor}.

\begin{proof}[Proof of Theorem~\ref{thm-nUNGERADE:mor}]
Suppose first that $n=3$. According to Theorem~\ref{thm-nUngerade:N(P)} and~\eqref{gl-mor_P}, we have
\begin{align*}
    \#\mor_P(\PP^1,X)(\fq)
        &=\begin{dcases} 
        \frac{q+1}{2q}(P+1)q^{P+1}+\frac{q^2+1}{2q(q-1)}\, q^{P+1}-\frac{(q+1)^2}{2q}Pq^P\\\quad -\frac{q+1}{q-1}\,q^P
         +\frac{q+1}{2}(P-1)q^{P-1}+\frac{q^2+1}{2(q-1)}\,q^{P-1}&\textup{if }2\mid P,\\
         \frac{q+1}{2q}(P+1)q^{P+1}+\frac{1}{q-1}\, q^{P+1}-\frac{(q+1)^2}{2q}Pq^P\\\quad -\frac{(q+1)\left(q^2+1\right)}{2q(q-1)}\,q^P
        +\frac{q+1}{2}(P-1)q^{P-1}+\frac{q}{q-1}\,q^{P-1}&\textup{if }2\nmid P.
        \end{dcases}
\end{align*}
Since 
\begin{align*}
    \frac{q+1}{2q}\cdot q-\frac{(q+1)^2}{2q}+\frac{q+1}{2}\cdot\frac{1}{q}
    &=0,
\end{align*}
the coefficient of $Pq^P$ vanishes, independent of the parity of $P$. Hence, we obtain
\begin{align*}
    \#\mor_P(\PP^1,X)(\fq)
    &=\left(\frac{q+1}{2q}+\frac{q^2+1}{2q(q-1)}\right)q^{P+1}-\frac{q+1}{q-1}\,q^P+\left(-\frac{q+1}{2}+\frac{q^2+1}{2(q-1)}\right)q^{P-1}\\
    &=\frac{2q^3-2q^2-2q+2}{2q(q-1)}\,q^P
    =\frac{q^2-1}{q}\,q^P
\end{align*}
if $P$ is even, and
\begin{align*}
        \#\mor_P(\PP^1,X)(\fq)
        &=\frac{q(q^2-1)+2q^2-(q+1)(q^2+1)-(q^2-1)+2q}{2q(q-1)}\,q^P =0
    \end{align*}
if $P$ is odd. 

Now we assume that $n\ge 5$. Computing the summands in~\eqref{gl-mor_P}, we get
    \begin{align*}
        \frac{N(P+1)}{q-1}
        &=\begin{dcases}
            \frac{\left(q^{n-1}-1\right)q^{n-2}}{\left(q^{n-2}-q\right)(q-1)}\,q^{P(n-2)}-\frac{(q+1)q^{n-2}}{q^{n-2}-q}\,q^{\frac{n-1}{2}P}&\textup{if }2\mid P,\\
            \frac{\left(q^{n-1}-1\right)q^{n-2}}{\left(q^{n-2}-q\right)(q-1)}\,q^{P(n-2)}-\frac{(q^{n-2}+1)q^{\frac{n-1}{2}}}{q^{n-2}-q}\,q^{\frac{n-1}{2}P}&\textup{if }2\nmid P,
        \end{dcases}\\
        -\frac{(q+1)N(P)}{q-1}
        &=\begin{dcases}
        -\frac{(q^{n-1}-1)(q+1)}{\left(q^{n-2}-q\right)(q-1)}\,q^{P(n-2)}+\frac{(q^{n-2}+1)(q+1)}{q^{n-2}-q}\,q^{\frac{n-1}{2}P}&\textup{if }2\mid P,\\
            -\frac{(q^{n-1}-1)(q+1)}{(q^{n-2}-q)(q-1)}\,q^{P(n-2)}+\frac{(q+1)^2q^{\frac{n-3}{2}}}{q^{n-2}-q}\,q^{\frac{n-1}{2}P}&\textup{if }2\nmid P,
        \end{dcases}
        \intertext{as well as}
        \frac{qN(P-1)}{q-1}
        &=\begin{dcases}
            \frac{(q^{n-1}-1)q^{3-n}}{\left(q^{n-2}-q\right)(q-1)}\,q^{P(n-2)}-\frac{q+1}{q^{n-2}-q}\,q^{\frac{n-1}{2}P}&\textup{if }2\mid P,\\
            \frac{(q^{n-1}-1)q^{3-n}}{\left(q^{n-2}-q\right)(q-1)}\,q^{P(n-2)}-\frac{(q^{n-2}+1)q^{\frac{3-n}{2}}}{q^{n-2}-q}\,q^{\frac{n-1}{2}P}&\textup{if }2\nmid P.
        \end{dcases}
    \end{align*}
In both cases, we can compute
\begin{align*}
    \frac{q^{n-1}-1}{(q^{n-2}-q)(q-1)}\left(q^{n-2}-q-1+q^{3-n}\right)
    &=\frac{(q^{n-1}-1)(1-q^{2-n})}{q-1}
    =\frac{(q^{n-1}-1)(q^{n-2}-1)}{q^{n-2}(q-1)}
\end{align*}
for the coefficient of $q^{P(n-2)}$. 
Since
\begin{align*}
    &-\frac{(q+1)q^{n-2}}{q^{n-2}-q}+\frac{(q^{n-2}+1)(q+1)}{q^{n-2}-q}-\frac{q+1}{q^{n-2}-q}=0,
    \end{align*}
the coefficient of $q^{\frac{n-1}{2}P}$ vanishes if $P$ is even, and we thus obtain
\begin{align*}
    \#\mor_P(\PP^1,X)(\fq)
    =\frac{(q^{n-1}-1)(q^{n-2}-1)}{q^{n-2}(q-1)}\,q^{P(n-2)}
\end{align*}
in this case. If $P$ is odd, we have the same coefficient for $q^{P(n-2)}$ as in the case where $P$ is even, whereas for the coefficient of $q^{\frac{n-1}{2}P}$ we can compute
\begin{align*}
   &\:\frac{-(q^{n-2}+1)q^{\frac{n-1}{2}}+(q+1)^2 q^{\frac{n-3}{2}}-(q^{n-2}+1)q^{\frac{3-n}{2}}}{q^{n-2}-q}\\
   =&\:\frac{q^{\frac{3-n}{2}}\left(-q^{2n-4}-q^{n-2}+(q+1)^2q^{n-3}-q^{n-2}-1\right)}{q^{n-2}-q}\\
   =&\:-\frac{q^{\frac{3-n}{2}}\left(q^{n-1}-1\right)\left(q^{n-3}-1\right)}{q^{n-2}-q}
    =-q^{\frac{1-n}{2}}\left(q^{n-1}-1\right).
\end{align*}
In total, we then get
\begin{align*}
    \#\mor_P(\PP^1,X)(\fq)
    &=\frac{(q^{n-1}-1)(q^{n-2}-1)}{q^{n-2}(q-1)}\,q^{P(n-2)}-\frac{q^{n-1}-1}{q^{\frac{n-1}{2}}}\,q^{\frac{n-1}{2}P}
\end{align*}
if $P$ is odd.
\end{proof}

\printbibliography
\end{document}